\theoremstyle{plain}
 \newtheorem{theorem}{\bf Theorem}[section]
 \newtheorem{lemma}[theorem]{Lemma}
 \newtheorem{corollary}[theorem]{Corollary}
 \newtheorem{claim}[theorem]{Claim}
\theoremstyle{definition}
 \newtheorem{definition}[theorem]{Definition}
  \newtheorem{q}[theorem]{Question}
 \newtheorem{remark}[theorem]{Remark}
\numberwithin{equation}{section}
\newcommand{\TS}{\textstyle}
\renewcommand{\a}{\alpha}
\renewcommand{\b}{\beta}
\newcommand{\g}{\gamma}
\renewcommand{\d}{\delta}
\renewcommand{\o}{\omega}
\DeclareFontFamily{U}{wncy}{}
 \DeclareFontShape{U}{wncy}{m}{n}{<->wncyr10}{}
 \DeclareSymbolFont{mcy}{U}{wncy}{m}{n}
 \DeclareMathSymbol{\dc}{\mathord}{mcy}{"64}
\DeclareFontFamily{U}{wncy}{}
 \DeclareFontShape{U}{wncy}{m}{n}{<->wncyr10}{}
 \DeclareSymbolFont{mcy}{U}{wncy}{m}{n}
 \DeclareMathSymbol{\bc}{\mathord}{mcy}{"62}
\newcommand{\ps}{\emptyset}
\newcommand{\sledi}{\Rightarrow}
\newcommand{\rest}{\upharpoonright}
\newcommand{\akko}{\Leftrightarrow}
\newcommand{\forallbutfin}{{\forall}^{\infty}}
\newcommand{\existsinf}{{\exists}^{\infty}}
\newcommand{\Q}{\mathbb Q}
\renewcommand{\P}{\mathbb P}
\renewcommand{\c}{\mathfrak c}
\newcommand{\z}{\mathfrak z}
\newcommand{\cp}{\mathcal P}
\newcommand{\cu}{\mathcal U}
\newcommand{\cv}{\mathcal V}
\newcommand{\seq}[1]{\left<#1\right>}
\newcommand{\set}[1]{\left\{#1\right\}}
\newcommand{\abs}[1]{\left\vert#1\right\vert}
\newcommand{\br}[1]{\left(#1\right)}
\newcommand{\lp}{\left\llbracket}
\newcommand{\rp}{\right\rrbracket}
\newcommand{\ran}{\operatorname{ran}}
\newcommand{\id}{\operatorname{id}}
\newcommand{\cf}{\operatorname{cof}}
\newcommand{\sset}{\operatorname{set}}
\renewcommand{\int}{\operatorname{int}}
\newcommand{\fin}{\operatorname{FIN}}
\newcommand{\BS}{{\omega}^{\omega}}
\newcommand{\MA}{\mathrm{MA}}
\newcommand{\CH}{\mathrm{CH}}
\newcommand{\ZFC}{\mathrm{ZFC}}
\author[B. Kuzeljevic]{Borisa Kuzeljevic}
\address{Department of Mathematics, National University of Singapore, Singapore 119076.}
\address{Mathematical Institute SANU, Kneza Mihaila 36, 11001 Belgrade, Serbia.}
\curraddr{}
\email{borisa@mi.sanu.ac.rs}
\urladdr{http://www.mi.sanu.ac.rs/$\sim$borisa}
\dedicatory{}
\author[D. Raghavan]{Dilip Raghavan}
\address{Department of Mathematics, National University of Singapore, Singapore 119076.}
\curraddr{}
\email{raghavan@math.nus.edu.sg}
\urladdr{http://www.math.nus.edu.sg/$\sim$raghavan}
\dedicatory{}
\title[A long chain of P-points]{A long chain of P-points}
\keywords{ultrafilter, Rudin-Keisler order, Tukey reducibility, P-point}
\subjclass[2010]{03E50, 03E05, 03E35, 54D80}
\thanks{Both authors were partially supported by NUS research grant number R-146-000-161-133. The first author was partially supported by the MPNTR grant ON174006}
\date{\today}
\begin{document}

\begin{abstract}
The notion of a $\delta$-generic sequence of P-points is introduced in this paper.
It is proved assuming the Continuum Hypothesis that for each $\delta < {\omega}_{2}$, any $\delta$-generic sequence of P-points can be extended to an ${\omega}_{2}$-generic sequence.
This shows that the Continuum Hypothesis implies that there is a chain of P-points of length ${\c}^{+}$ with respect to both Rudin-Keisler and Tukey reducibility.
The proofs can be easily adapted to get such a chain of length ${\c}^{+}$ under a more general hypothesis like Martin's Axiom.
These results answer an old question of Andreas Blass.
\end{abstract}
\maketitle
\section{Introduction}
In his 1973 paper on the structure of P-points \cite{blassrk}, Blass posed the following question:
\begin{q}[Question 4 of \cite{blassrk}]\label{q:blass}
 What ordinals can be embedded into the class of P-points when equipped with the ordering of Rudin-Keisler reducibility assuming Martin's Axiom?   
\end{q}
Recall that an ultrafilter $\mathcal U$ on $\o$ is called a \emph{P-point} if for any $\set{a_n:n < \o} \subset \mathcal U$ there is $a \in \mathcal U$ such that $a \subset^* a_n$, for every $n<\o$.
All filters $\cu$ occurring in this paper are assumed to be \emph{proper} -- meaning that $0 \notin \cu$ -- and \emph{non-principal} -- meaning that $\cu$ extends the filter of co-finite sets.
It is not hard to see that an ultrafilter $\cu$ is a P-point if and only if every $f \in \BS$ becomes either constant or finite-to-one on a set in $\cu$.
Recall also the well-known Rudin-Keisler ordering on P-points.
\begin{definition} \label{def:rk}
Let $\cu$ and $\cv$ be ultrafilters on $\o$.
We say that $\cu \le_{RK} \cv $, i.e.\@ $\cu$ is \emph{Rudin-Keisler (RK) reducible} to $\cv$ or $\cu$ is \emph{Rudin-Keisler (RK) below} $\cv$, if there is $f\in\o^{\o}$ such that $A \in \cu \akko f^{-1}(A)\in \cv$ for every $A \subset \o$.
We say that $\cu \; {\equiv}_{RK} \; \cv$, i.e.\@ $\cu$ is \emph{RK equivalent} to $\cv$, if $\cu \; {\leq}_{RK} \; \cv$ and $\cv \; {\leq}_{RK} \; \cu$.
\end{definition}
It is worth noting here that the class of P-points is downwards closed with respect to this order.
In other words, if $\cu$ is a P-point, then every ultrafilter that is RK below $\cu$ is also a P-point.
It should also be noted that the existence of P-points cannot be proved in $\ZFC$ by a celebrated result of Shelah (see \cite{PIF}).
Hence it is natural to assume some principle that guarantees the existence of ``many'' P-points when studying their properties under the RK or other similar orderings.
Common examples of such principles include the Continuum Hypothesis ($\CH$), Martin's axiom ($\MA$), and Martin's Axiom for $\sigma$-centered posets ($\MA(\sigma-\textrm{centered})$).

Blass showed in \cite{blassrk} that the ordinal ${\omega}_{1}$ can be embedded into the P-points with respect to the RK ordering, if $\MA(\sigma-\textrm{centered})$ holds.
In particular under $\CH$, the ordinal $\c = {2}^{{\aleph}_{0}}$ embeds into the P-points with respect to RK reducibility.
Note that no ultrafilter $\cv$ can have more than $\c$ predecessors in the RK order.
This is because for each $f \in \BS$, there can be at most one ultrafilter $\cu$ for which $f$ witnesses the relation $\cu \; {\leq}_{RK} \; \cv$.
Therefore there can be no RK-chain of P-points of length ${\c}^{+} + 1$.
Thus the strongest possible positive answer to Question \ref{q:blass} is that the ordinal ${\c}^{+}$ embeds into the P-points under the RK ordering. 

Though there have not been many advances directly pertaining to  Question \ref{q:blass} after \cite{blassrk}, several results have dealt with closely related issues.
Rosen~\cite{rosen} showed assuming $\CH$ that the ordinal ${\omega}_{1}$ occurs as an RK initial segment of the P-points.
In other words, he produced a strictly increasing RK chain of P-points of length ${\omega}_{1}$ that is downwards closed under the relation ${\leq}_{RK}$ up to RK equivalence.
Laflamme ~\cite{lacomplete} further investigated well-ordered initial segments of the P-points under the RK ordering.
For each countable ordinal $\alpha$, he produced a forcing notion ${\P}_{\alpha}$ that generically adds an RK initial segment of the P-points of order type $\alpha$.
He also gave combinatorial characterizations of the generics added by these forcing notions.

Dobrinen and Todorcevic~\cite{dt} considered the Tukey ordering on P-points.
Recall that for any $\mathcal X,\mathcal Y\subset \cp(\o)$, a map $\phi:\mathcal X\to \mathcal Y$ is said to be \emph{monotone} if for every $a,b\in \mathcal X$, $a\subset b$ implies $\phi(a)\subset \phi(b)$, while $\phi$ is said to be \emph{cofinal} in $\mathcal Y$ if for every $b\in \mathcal Y$ there is $a\in \mathcal X$ so that $\phi(a)\subset b$.
\begin{definition}\label{d2}
 We say that $\cu \; {\leq}_{T} \;\cv$, i.e.\@ $\cu$ is \emph{Tukey reducible} to $\cv$ or $\cu$ is \emph{Tukey below} $\cv$, if there is a monotone $\phi: \cv \to \cu$ which is cofinal in $\cu$.
 We say that $\cu \; {\equiv}_{T} \; \cv$, i.e.\@ $\cu$ is \emph{Tukey equivalent} to $\cv$, if $ \cu \; {\leq}_{T} \; \cv$ and $\cv \; {\leq}_{T} \; \cu$.
\end{definition}
It is not hard to see that $\cu \; {\leq}_{RK} \; \cv$ implies $\cu \; {\leq}_{T} \; \cv$, and it was proved by Raghavan and Todorcevic in \cite{tukey} that $\CH$ implies the existence of P-points $\cu$ and $\cv$ such that $\cv \; {<}_{RK} \; \cu$, but $\cv \; {\equiv}_{T} \; \cu$.
Their result showed that the orders ${\leq}_{T}$ and ${\leq}_{RK}$ can diverge in a strong sense even within the realm of P-points, although by another result from \cite{tukey}, the two orders coincide within the realm of selective ultrafilters.
In \cite{dt}, Dobrinen and Todorcevic showed that
every P-point has only $\c$ Tukey predecessors by establishing the following useful fact.
\begin{theorem}[Dobrinen and Todorcevic~\cite{dt}] \label{thm:dt}
 If $\cv$ is a P-point and $\cu$ is any ultrafilter with $\cu \; {\leq}_{T} \; \cv$, then there is a continuous monotone $\phi: \cp(\omega) \rightarrow \cp(\omega)$ such that $\phi \rest \cv: \cv \rightarrow \cu$ is a monotone map that is cofinal in $\cu$.
\end{theorem}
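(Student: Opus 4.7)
The plan is to upgrade the given monotone cofinal map $\psi: \cv \to \cu$ (which exists by the hypothesis $\cu \leq_T \cv$) into a continuous monotone map defined on all of $\cp(\omega)$, by exploiting the P-point property of $\cv$ to perform a single diagonalization. Recall that any continuous monotone map $\phi: \cp(\omega) \to \cp(\omega)$ is determined by a monotone function $\phi_0: [\omega]^{<\omega} \to [\omega]^{<\omega}$ via $\phi(A) = \bigcup_n \phi_0(A \cap n)$, so the task reduces to building such a $\phi_0$.

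The first step is to construct, by induction on $n$, a decreasing sequence $X_0 \supseteq X_1 \supseteq \cdots$ in $\cv$ together with integers $m_0 < m_1 < \cdots$ satisfying the following stability property: for every $B \in \cv$ with $B \subseteq X_n$, the finite set $\psi(B) \cap n$ depends only on $B \cap m_n$. To arrange this at stage $n$, one uses that $\psi(B) \cap n$ takes only finitely many possible values as $B$ ranges over $\cv$, and that $\cv$ is an ultrafilter, refining $X_{n-1}$ by finitely many intersections with elements of $\cv$ to freeze the value of $\psi(B) \cap n$ on each finite "type" $t \subseteq m_n$. Next, invoke the P-point property of $\cv$ to obtain $X^{*} \in \cv$ with $X^{*} \subseteq^{*} X_n$ for every $n$; by further thinning, arrange $X^{*} \setminus m_n \subseteq X_n$. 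Now for each finite $t \subseteq \omega$, define $\phi_0(t)$ to be the stabilized value of $\psi(B) \cap n$ associated with the type $t \cap m_n$, where $n$ is chosen maximal with $m_n \leq \max(t)+1$; then extend $\phi_0$ monotonically to all of $[\omega]^{<\omega}$.

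The resulting $\phi$ is continuous and monotone by construction. For any $A \in \cv$ with $A \subseteq X^{*}$, the stability property guarantees $\phi(A) = \psi(A) \in \cu$, and since such $A$ form a cofinal subfamily of $\cv$ (as $X^{*} \in \cv$), the cofinality of $\psi$ in $\cu$ transfers to $\phi \rest \cv$. The main obstacle will be executing the stabilization step coherently: one must ensure that the finitely many freezings done at stage $n$ remain compatible with the refinements at stage $n+1$, so that the stabilized values assigned to various finite types can be organized into a genuinely monotone $\phi_0$. This is essentially a bookkeeping argument using Boolean combinations inside $\cv$, and the P-point hypothesis enters in an essential way only at the diagonalization step, since without it the countable sequence $\langle X_n \rangle$ need not have a pseudo-intersection in $\cv$, and the whole scheme collapses.
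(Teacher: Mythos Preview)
This theorem is not proved in the paper: it is quoted from Dobrinen--Todorcevic~\cite{dt} and invoked only as a black box (to bound the number of monotone maps that must be handled when ruling out unwanted Tukey reductions in Lemma~\ref{t4} and in the proof of Theorem~\ref{t1}). There is therefore no proof in the paper to compare your proposal against.

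On its own merits, your sketch has the right architecture---diagonalize a countable sequence in $\cv$ using the P-point property---but the stabilization step does not work as written. You ask that for $B\in\cv$ with $B\subseteq X_n$ the value $\psi(B)\cap n$ depend only on $B\cap m_n$. Monotonicity of $\psi$ already forces something stronger: for each $k<n$, either $k\in\psi(B)$ for every $B\in\cv$, or there is $C_k\in\cv$ with $k\notin\psi(C_k)$, whence $k\notin\psi(B)$ for every $B\in\cv$ with $B\subseteq C_k$. Intersecting these $C_k$'s produces an $X_n$ below which $\psi(B)\cap n$ is \emph{constant}; the ``types'' $t\subseteq m_n$ then play no role, and the map you build collapses to the constant $A\mapsto\bigcap_{B\in\cv}\psi(B)$, which need not lie in $\cu$ and cannot be cofinal unless $\cu$ is principal. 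Even under the more charitable reading $B\setminus m_n\subseteq X_n$ in place of $B\subseteq X_n$, there is a circularity: the integer $m_n$ recording how far $X^{*}$ overflows $X_n$ can only be fixed after $X^{*}$ is chosen, yet your stage-$n$ stabilization is supposed to be indexed by types $t\subseteq m_n$. The argument in~\cite{dt} sidesteps this by defining $\phi$ canonically---essentially $\phi(A)=\{k:\exists s\in[A]^{<\omega}\ \forall B\in\cv\,(s\subseteq B\Rightarrow k\in\psi(B))\}$---and then using the P-point hypothesis to check that $\phi(A)\in\cu$ for every $A\in\cv$; it is this last verification, rather than the definition of $\phi$, that carries the real content, and your outline does not supply it.
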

They used this in \cite{dt} to embed the ordinal ${\omega}_{1}$ into the class of P-points equipped with the ordering of Tukey reducibility assuming $\MA(\sigma-\textrm{centered})$.
Question 54 of \cite{dt} asks whether there is a strictly increasing Tukey chain of P-points of length ${\c}^{+}$. 
In \cite{dttrans1} and \cite{dttrans2}, Dobrinen and Todorcevic proved some analogues of Laflamme's results mentioned above for the Tukey order.
In particular, they showed that each countable ordinal occurs as a Tukey initial segment of the class of P-points, assuming $\MA(\sigma-\textrm{centered})$.
Raghavan and Shelah proved in \cite{pomegaembed} that $\MA(\sigma-\textrm{centered})$ implies that the Boolean algebra $\cp(\omega) \slash \fin$ equipped with its natural ordering embeds into the P-points with respect to both the RK and Tukey orders.
In particular, for each $\alpha < {\c}^{+}$, the ordinal $\alpha$ embeds into the P-points with respect to both of these orders. 

In this paper, we give a complete answer to Question \ref{q:blass} by showing that the ordinal ${\c}^{+}$ can be embedded into the P-points under RK reducibility.
Our chain of P-points of length ${\c}^{+}$ will also be strictly increasing with respect to Tukey reducibility, so we get a positive answer to Question 54 of \cite{dt} as well.
The construction will be presented assuming $\CH$ for simplicity.
However the same construction can be run under $\MA$ with some fairly straightforward modifications.
We will try to point out these necessary modifications at the appropriate places in the proofs below.
We will make use of Theorem \ref{thm:dt} in our construction to ensure that our chain is also strictly increasing in the sense of Tukey reducibility.
However the continuity of the monotone maps will not be important for us.
Rather any other fixed collection of $\c$ many monotone maps from $\cp(\omega)$ to itself which is large enough to catch all Tukey reductions from any P-point will suffice.
For instance, it was proved in \cite{tukey} that the collection of monotone maps of the first Baire class suffice to catch all Tukey reductions from any basically generated ultrafilter, which form a larger class of ultrafilters than the P-points.
So we could equally well have used monotone maps of the first Baire class in our construction.

A powerful machinery for constructing objects of size ${\aleph}_{2}$ under $\diamondsuit$ was introduced by Shelah, Laflamme, and Hart in \cite{sh:162}.
This machinery can be used to build a chain of P-points of length ${\omega}_{2}$ that is strictly increasing with respect to both RK and Tukey reducibility assuming $\diamondsuit$.
More generally, the methods in \cite{sh:162} allow for the construction of certain types of objects of size ${\lambda}^{+}$ from a principle called ${\mathord{\mathrm{Dl}}}_{\lambda}$, which is closely related to ${\diamondsuit}_{\lambda}$.
Shelah's results in \cite{sh:922} imply that ${\mathord{\mathrm{Dl}}}_{\c}$ follows from $\MA$ when $\c > {\aleph}_{1}$ and is a successor cardinal.  
Thus the methods of \cite{sh:162}, when combined with the results of \cite{sh:922}, can also be used to get a chain of P-points of length ${\c}^{+}$ when $\c > {\aleph}_{1}$, $\c$ is a successor cardinal, and $\MA$ holds.
However the techniques from \cite{sh:162} are inadequate to treat the case when only $\CH$ holds\footnote{Personal communication with Shelah.
The second author thanks Shelah for discussing these issues with him.}.
\section{Preliminaries} \label{sec:prelim}
We use standard notation. 
``$\forallbutfin x$ \ldots'' abbreviates the quantifier ``for all but finitely many $x$ \ldots'' and ``$\existsinf x$ \ldots'' stands for ``there exist infinitely many $x$ such that \dots''.
${[\omega]}^{\omega}$ refers to the collection of all infinite subsets of $\omega$, and ${[\omega]}^{< \omega}$ is the collection of all finite subsets of $\omega$.
The symbol $\subset^*$ denotes the relation of containment modulo a finite set: $a\subset^* b$ iff $a\setminus b$ is finite. 

Even though the final construction uses $\CH$, none of the preliminary lemmas rely on it.
In fact, $\CH$ will only be used in Section \ref{sec:long}.
The results in Sections \ref{sec:prelim}--\ref{sec:top} are all in $\ZFC$, and $\CH$ is needed later on to ensure that these results are sufficient to carry out the final construction and that they are applicable to it.
So we do not need to make any assumptions about cardinal arithmetic at the moment.

One of the difficulties in embedding various partially ordered structures into the P-points is that, unlike the class of all ultrafilters on $\omega$, this class is not $\c$-directed under ${\leq}_{RK}$.
It is not hard to prove in $\ZFC$ that if $\{{\cu}_{i}: i < \c\}$ is an arbitrary collection of ultrafilters on $\omega$, then there is an ultrafilter $\cu$ on $\omega$ such that $\forall i < \c\left[{\cu}_{i} \; {\leq}_{RK} \; \cu\right]$.
However it is well-known that there are two P-points $\cu$ and $\cv$ with no RK upper bound that is a P-point under $\CH$ (see \cite{blassrk}). 
Even if we restrict ourselves to chains, it is easy to construct, assuming $\CH$, an RK chain of P-points $\langle {\cu}_{i}: i < {\omega}_{1} \rangle$ which has no P-point upper bound.
The strategy for ensuring that our chains are always extensible is to make each ultrafilter ``very generic'' (with respect to some partial order to be defined in Section \ref{sec:top}).
The same strategy was used in \cite{pomegaembed}, but with one crucial difference.
Only $\c$ many ultrafilters were constructed in \cite{pomegaembed} and so all of the ultrafilters in question could be built simultaneously in $\c$ steps.
But by the time we get to, for example, the ultrafilter ${\cu}_{{\omega}_{1}}$ in our present construction, all of the ultrafilters ${\cu}_{i}$, for $i < {\omega}_{1}$, will have been fully determined with no room for further improvements.
Thus the ultrafilters that were built before should have already predicted and satisfied the requirements imposed by ${\cu}_{{\omega}_{1}}$, and indeed by all of the ultrafilters to come in future.
This is possible because there are only ${\omega}_{1}$ possible initial segments of ultrafilters.
More precisely, each of the ${\omega}_{2}$ many P-points is generated by a ${\subset}^{\ast}$-descending tower ${A}_{\alpha} = \langle {C}^{\alpha}_{i}: i < {\omega}_{1} \rangle$.
For each $j < {\omega}_{1}$, the collection $\{{A}_{\alpha} \rest j: \alpha < {\omega}_{2}\}$ just has size ${\omega}_{1}$.
This leads to the notion of a $\delta$-generic sequence, which is essentially an RK-chain of P-points of length $\delta$ where every ultrafilter in the sequence has predicted and met certain requirements involving such initial segments of potential future ultrafilters and potential RK maps going from such initial segments into it.
The precise definition is given in Definition \ref{d22}.
Our main result is that such generic sequences can always be extended.
\begin{remark}\label{r1}
In the rest of the paper we will use the following notation:
\begin{itemize}
\item For $A\subset \o$, define $A^0=A$ and $A^1=\o\setminus A$.
\item For $A\in [\o]^{\o}$, $A(m)$ is the $m$th element of $A$ in its increasing enumeration.
\item For $A\in [\o]^{\o}$ and $k,m<\o$ let $A[k,m)=\set{A(l):k\le l<m}$.
\item For $A\in [\o]^{\o}$ and $m<\o$, let $A[m]=\set{A(l):l\ge m}$.
\item For a sequence $c\!=\!\seq{c(\xi):\xi<\a}$, let $\sset(c)=\bigcup_{\xi<\a}c(\xi)$.
\item For a sequence $c\!=\!\seq{c(\xi):\xi\!<\!\a}$ and $\eta\!<\!\a$, let $\sset(c)\lp \eta\rp\!=\!\bigcup_{\eta\le\xi<\a}c(\xi)$.
\item For $m\in \o$, let $s(m)=m(m+1)/2$ and $t(m)=\sum_{s(m)\le k<s(m+1)}(k+1)$.
\item A function $f\in \o^{\o}$ is \emph{increasing} if $\forall n\in \o\ [f(n)\le f(n+1)]$.
\end{itemize}
We also consider, for an ordinal $\a$, triples $\rho\!=\!\seq{\bar D,\bar K, \bar \pi}$ where $\bar D\!=\!\seq{D_n:n\!<\!\a}$ is a sequence of sets  in $\cp(\o)$, $\bar K=\seq{K_{m,n}:m\le n<\a}$ is a sequence in $\o$ and $\bar \pi=\seq{\pi_{n,m}:m\le n<\a}$ is a sequence in $\o^{\o}$. Then, for $n<\a$, denote:
\begin{itemize}
\item $\Delta^{\rho}_n=\bigcup_{m\le n}H^{\rho}_{m,n}$ where $H^{\rho}_{m,n}$ is the set of all $k\in D_m[K_{m,n},K_{m,n+1})$ such that $\pi_{m',m}^{-1}(\set{k})\cap D_{m'}[K_{m',n},K_{m',n+1})=0$ for all $m<m'\le n$;
\item $L^{\rho}_0=0$ and for each $n \in \omega$, $L^{\rho}_{n+1}=L^{\rho}_n+\abs{\Delta^{\rho}_n}$.
\end{itemize}
\end{remark}
Regarding the definition of ${\Delta}^{\rho}_{n}$, the reader should think of the sequence $\bar{K}$ as defining an interval partition of ${D}_{m}$, for each $m$.
Then for any $m \leq n$, ${H}^{\rho}_{m, n}$ consists of the points in the $n$th interval of ${D}_{m}$ that do not have a preimage in the $n$th interval of ${D}_{m'}$ for any $m < m' \leq n$.

Next we recall the notion of a rapid ultrafilter.
All the P-points in our construction will be rapid.
This happens because the requirement of genericity forces our ultrafilters to contain some ``very thin'' sets.
However they cannot be too thin, lest we end up with a Q-point.
Rapidity turns out to be a good compromise.
\begin{definition}\label{d15}
We say that the ultrafilter $\mathcal U$ is \emph{rapid} if for every $f\in\o^{\o}$ there is $X\in \mathcal U$ such that $X(n)\ge f(n)$ for every $n<\o$.
\end{definition}

\begin{lemma}\label{t33}
If $\mathcal U$ is a rapid ultrafilter, then for every $f\in\o^{\o}$ and every $X\in \mathcal U$ there is $Y\in \mathcal U$ such that $Y\subset X$ and $Y(n)\ge f(n)$ for every $n<\o$.
\end{lemma}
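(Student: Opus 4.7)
The plan is to use the rapidity hypothesis almost verbatim, with one extra step to force the resulting set to lie below $X$. Given $f \in \o^\o$ and $X \in \cu$, first apply Definition \ref{d15} directly to $f$ (with no reference to $X$) to obtain some $Z \in \cu$ with $Z(n) \ge f(n)$ for every $n < \o$. Then take $Y = Z \cap X$. Since $\cu$ is a filter and $Z, X \in \cu$, we have $Y \in \cu$, and by construction $Y \subset X$.

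It remains to verify that $Y(n) \ge f(n)$ for every $n$. The key elementary observation is that for any two infinite subsets $A, B \sse \o$ with $A \sse B$, the increasing enumerations satisfy $A(n) \ge B(n)$ for every $n$: if $A = \set{a_0 < a_1 < \cdots}$ and $B = \set{b_0 < b_1 < \cdots}$, then each $a_n$ equals some $b_{k_n}$ with $k_0 < k_1 < \cdots$, so $k_n \ge n$ and hence $a_n = b_{k_n} \ge b_n$. Applying this with $A = Y$ and $B = Z$ gives $Y(n) \ge Z(n) \ge f(n)$ for every $n < \o$, as required.

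There is no real obstacle: the statement is just the obvious strengthening of the definition of rapidity by restriction to a prescribed set $X \in \cu$, and the only step beyond citing Definition \ref{d15} is the elementary fact that passing to a subset can only push elements of the increasing enumeration to the right. Consequently the proof fits in just a couple of lines.
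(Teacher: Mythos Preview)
Your proof is correct and follows essentially the same approach as the paper: apply rapidity to get $Z \in \cu$ with $Z(n) \ge f(n)$, intersect with $X$ to get $Y = X \cap Z \in \cu$, and use $Y \subset Z$ to conclude $Y(n) \ge Z(n) \ge f(n)$. The only difference is that you spell out the elementary fact about enumerations of subsets, which the paper leaves implicit.
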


\begin{proof}
Let $f\in\o^{\o}$ and $X\in \mathcal U$. Let $Z$ be as in Definition \ref{d15}, i.e. $Z\in \cu$ and $Z(n)\ge f(n)$ for $n<\o$. Let $Y=X\cap Z$ and note $Y\in \cu$. Since $Y\subset Z$ we have $Y(n)\ge Z(n)\ge f(n)$ for every $n<\o$.
\end{proof}

\begin{claim}\label{t102}
Let $\seq{\cu_n:n<\o}$ be a sequence of distinct P-points and $M$ a countable elementary submodel of $H_{\br{2^{\c}}^+}$ containing $\seq{\cu_n:n<\o}$. For $n<\o$ let $A_n\in\cu_n$ be such that $A_n\subset^* A$ for each $A\in\cu_n\cap M$. Then $\abs{A_n\cap A_m}\!<\!\o$ for $m\!<\!n\!<\!\o$.
\end{claim}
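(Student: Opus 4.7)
The plan is to exploit the fact that distinct ultrafilters on $\omega$ can always be separated by a single set, and then use elementarity of $M$ to pull such a separating set into $M$, where the defining property of the $A_n$'s takes effect.

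Fix $m<n<\omega$. Since $\cu_m$ and $\cu_n$ are distinct ultrafilters on $\omega$, there exists some $X\subseteq \omega$ with $X\in\cu_m\setminus\cu_n$; as $\cu_n$ is an ultrafilter, $\omega\setminus X\in\cu_n$. The statement ``$\exists X\,(X\in\cu_m\wedge X\notin\cu_n)$'' is true in $V$ with parameters $\cu_m,\cu_n\in M$, so by elementarity it is witnessed by some $X\in M$. Then $X\in\cu_m\cap M$ and $\omega\setminus X\in\cu_n\cap M$ (using that $\omega\in M$, hence $\omega\setminus X\in M$).

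Now the hypothesis on $A_m$ and $A_n$ applies: $A_m\subset^* X$ and $A_n\subset^*\omega\setminus X$. Therefore
\[
A_m\cap A_n \;\subset^*\; X\cap(\omega\setminus X) \;=\; \emptyset,
\]
which gives $|A_m\cap A_n|<\omega$ as desired.

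There is essentially no obstacle here; the only thing one has to be slightly careful about is verifying that the separating set may be chosen inside $M$, which is immediate from elementarity since both ultrafilters are parameters in $M$. The P-point hypothesis and rapidity are not used; only that $\cu_m,\cu_n$ are distinct ultrafilters belonging to $M$ and that $A_m,A_n$ are mod-finite contained in every member of $\cu_m\cap M$ and $\cu_n\cap M$ respectively.
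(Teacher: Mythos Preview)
Your proof is correct and follows essentially the same approach as the paper: pick a separating set $X\in\cu_m$ with $\omega\setminus X\in\cu_n$, use elementarity to pull it into $M$, and conclude $A_m\cap A_n\subset^*\emptyset$. Your additional remark that the P-point hypothesis is not needed for this claim is accurate.
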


\begin{proof}
Fix $m<n<\o$ and pick a set $A_{m,n}$ so that $A_{m,n}\in \cu_m$ and $\o\setminus A_{m,n}\in \cu_n$ (this can be done because $\cu_{m}\neq \cu_n$ for $m\neq n$). By elementarity of $M$, since $\cu_m,\cu_n\in M$ we can assume that $A_{m,n}\in M$. Now we have that $A_m\subset^* A_{m,n}$ and that $A_n\subset^* \o\setminus A_{m,n}$. So $A_m\cap A_n\subset^*0$ implying $\abs{A_m\cap A_n}<\o$.
\end{proof}
Now we introduce one of the basic partial orders used in the construction.
The definition of $\langle \P, \leq \rangle$ is inspired by the many examples of creature forcing in the literature, for example see \cite{creaturebook}.
However there is no notion of norm in this case, or rather the norm is simply the cardinality.
\begin{definition}\label{d13}
Define $\P$ as the set of all functions $c:\o\to [\o]^{<\o}\setminus\set{0}$ such that $\forall n\in\o\ [\abs{c(n)}<\abs{c(n+1)}\wedge \max(c(n))<\min(c(n+1))]$. If $c,d\in \P$, then $c\le d$ if there is $l<\o$ such that $c\le_l d$, where $c\le_l d \akko \forall m\ge l\ \exists n\ge m\ [c(m)\subset d(n)]$.
\end{definition}
Note that if $d \leq c$, then $\sset(d) \; {\subset}^{\ast} \; \sset(c)$.
Each of our ultrafilters will be generated by a tower of the form $\langle \sset({c}_{i}): i < {\omega}_{1} \rangle$ where $\langle {c}_{i}: i < {\omega}_{1} \rangle$ is some decreasing sequence of conditions in $\P$.
This guarantees that each ultrafilter is a P-point.
\begin{remark}\label{r5}
Note that $\seq{\P,\le}$ is a partial order and has the following properties:
\begin{enumerate}
\item\label{i201} For any $c\in \P$ we have $\min(c(n))\ge n$;
\item\label{i204} If $a\le_n b$ and $b\le_m c$, then $a\le_l c$ for $l=\max\set{m,n}$. To see this take any $k\ge l$. There is $k'\ge k\ge l$ such that $a(k)\subset b(k')$. There is also $k''\ge k'\ge k\ge l$ so that $a(k)\subset b(k')\subset c(k'')$ as required.
\item\label{i203} Let $\set{d_n\!:n\!<\!\o}\!\subset\! \P$ be such that $\forall n\!<\!\o\ [d_{n+1}\!\le_{m_n}\! d_n]$. Then $d_{n+1}\!\le_l\! d_0$ for $n<\o$ and $l=\max\set{m_k:k\le n}$. The proof is by induction on $n$. For $n\!=\!0$, $d_1\!\le_{m_0}\!d_0$. Let $n\!>\!0$ and assume the statement is true for all $m\!\le\! n$. Then $d_{n+2}\le_{m_{n+1}}d_{n+1}$ and $d_{n+1}\le_{l_1}d_0$, where $l_1=\max\set{m_k:k\le n}$. So, by (\ref{i204}), $d_{n+2}\le_l d_0$ for $l=\max\set{m_k:k\le n+1}$.
\end{enumerate}
\end{remark}

\begin{definition}\label{d21}
A triple $\seq{\pi,\psi,c}$ is called a \emph{normal triple} if $\pi,\psi\in \o^{\o}$, for every $l\le l'<\o$ we have that $\psi(l)\le \psi(l')$, if $\ran(\psi)$ is infinite, and if $c\in \P$ is such that for $l<\o$ we have $\pi''c(l)=\set{\psi(l)}$ and for $n\in \o\setminus \sset(c)$ we have $\pi(n)=0$.
\end{definition}
The notion of a normal triple will help us ensure that when $\alpha < \beta < {\omega}_{2}$, the RK reduction from ${\cu}_{\beta}$ to ${\cu}_{\alpha}$ is witnessed by a function that is increasing on a set in ${\cu}_{\beta}$.
Thus our sequence of P-points will actually even be a chain with respect to the order ${\leq}^{+}_{RB}$.
Recall that for ultrafilters $\cu$ and $\cv$ on $\omega$, $\cu \; {\leq}^{+}_{RB} \; \cv$ if there is an increasing $f \in \BS$ such that $A \in \cu \iff {f}^{-1}(A) \in \cv$, for every $A \subset \omega$.
More information about the order ${\leq}^{+}_{RB}$ can be found in \cite{LaZh}
\begin{remark}\label{r0}
Suppose that $d\le c$ and $\seq{\pi,\psi,c}$ is a normal triple. There is $N<\o$ such that for every $k,l\in \sset(d)\setminus N$ if $k\le l$, then $\pi(k)\le \pi(l)$.
\end{remark}

\begin{lemma}\label{t14}
Suppose that $\seq{\pi,\psi,b}$ is a normal triple, that $a\subset \pi''\sset(c)\lp n_0\rp$, and that $c\le_{n_0} b$. For $n<\o$ denote $F_n=\set{m<\o:\pi''c(m)=\set{a(n)}}$. Then for $n<\o$: $F_n\setminus n_0\neq 0$, $\abs{F_n}<\o$ and $\max(F_n)<\min(F_{n+1}\setminus n_0)\le \max(F_{n+1})$.
\end{lemma}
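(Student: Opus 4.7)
The plan is to first observe that for $m \ge n_0$ the set $\pi''c(m)$ is always a singleton determined by $\psi$, which reduces the lemma to analyzing the fibres of $\psi$. Indeed, since $c \le_{n_0} b$, for each $m \ge n_0$ there is a unique $n_m \ge m$ with $c(m) \sse b(n_m)$ (uniqueness follows from pairwise disjointness of the $b(n)$'s and $c(m) \neq \ps$). Normality of $\seq{\pi, \psi, b}$ then gives $\pi''c(m) \sse \pi''b(n_m) = \set{\psi(n_m)}$, and hence $\pi''c(m) = \set{\psi(n_m)}$ since $c(m) \neq \ps$.

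Next I would handle non-emptiness and finiteness of $F_n$. That $F_n \setminus n_0 \neq \ps$ is immediate: the hypothesis $a \sse \pi''\sset(c)\lp n_0\rp$ produces some $m \ge n_0$ with $a(n) \in \pi''c(m)$, and by the singleton remark $\pi''c(m) = \set{a(n)}$, so $m \in F_n \setminus n_0$. For finiteness, split $F_n = (F_n \cap n_0) \cup (F_n \setminus n_0)$. The first piece has at most $n_0$ elements. For the second, any $m$ in it satisfies $\psi(n_m) = a(n)$; since $\psi$ is non-decreasing with infinite range, $\psi^{-1}(\set{a(n)})$ is a finite interval of $\o$, and for each $l$ in it at most $\abs{b(l)}$ many $m$'s can have $c(m) \sse b(l)$ (disjoint non-empty subsets of a finite set).

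For the ordering, take $m \in F_n$ and $m' \in F_{n+1} \setminus n_0$; the goal is $m < m'$. The case $m < n_0$ is trivial as $m < n_0 \le m'$. If $m \ge n_0$ as well, the singleton representation yields $\psi(n_m) = a(n) < a(n+1) = \psi(n_{m'})$; since $\psi$ is non-decreasing this forces $n_m < n_{m'}$. The interval separation in $\P$ applied to $b$ gives $\max b(n_m) < \min b(n_{m'})$, whence $\max c(m) < \min c(m')$ via $c(m) \sse b(n_m)$ and $c(m') \sse b(n_{m'})$, and a second application of interval separation (this time to $c$) yields $m < m'$. The remaining inequality $\min(F_{n+1} \setminus n_0) \le \max F_{n+1}$ follows from $\ps \neq F_{n+1} \setminus n_0 \sse F_{n+1}$. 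No serious obstacle is anticipated; the only subtlety is handling the finitely many exceptional indices $m < n_0$ via a cardinality bound and a case split, since the singleton machinery only applies to $m \ge n_0$.
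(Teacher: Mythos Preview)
Your proof is correct and follows essentially the same approach as the paper: both reduce to the observation that for $m \ge n_0$ one has $\pi''c(m) = \{\psi(n_m)\}$ where $c(m) \subset b(n_m)$, and both derive the ordering $\max(F_n) < \min(F_{n+1}\setminus n_0)$ from the monotonicity of $\psi$ combined with the interval separation of $b$ and $c$. The only organizational difference is that the paper obtains finiteness of $F_n$ as an immediate byproduct of the ordering inequality (since $F_n$ is then bounded above by $\min(F_{n+1}\setminus n_0)$), whereas you prove finiteness separately via a direct count on fibres of $\psi$; both routes are valid.
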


\begin{proof}
Fix $n<\o$. By the choice of the set $a$ there are $k\ge n_0$ and $x\in c(k)$ so that $\pi(x)=a(n)$. Since $c\le_{n_0}b$ there is $m\ge k$ so that $c(k)\subset b(m)$ and because $\seq{\pi,\psi,b}$ is a normal triple we know $\pi''b(m)=\set{a(n)}$. So $k\in F_n$ is such that $k\ge n_0$, implying $F_n\setminus n_0\neq 0$. To show that each $F_n$ is finite take any $k\in F_n$. As above, $F_{n+1}\setminus n_0\neq 0$. Let $k'=\min\br{F_{n+1}\setminus n_0}$. We will show $k<k'$. If $k<n_0$ the statement follows. If $k\ge n_0$ then there are $m_1,m_2$ so that $c(k)\!\subset\! b(m_1)$ and $c(k')\!\subset\! b(m_2)$. Since $\pi''b(m_1)\!=\!\set{a(n)}$, $\pi''b(m_2)\!=\!\set{a(n+1)}$ and $\seq{\pi,\psi,b}$ is a normal triple we have $m_1\!<\!m_2$ and consequently $k\!<\!k'$. So $\max(F_n)\!<\!k'$ implying both $\abs{F_n}\!<\!\o$ and $\max(F_n)\!<\!\min(F_{n+1}\setminus n_0)\!\le\! \max(F_{n+1})$.
\end{proof}
Now comes the central definition of the construction.
We will briefly try to explain the intuition behind each of the clauses below.
Clauses (\ref{i2}), (\ref{i4}), and (\ref{i9}) are self explanatory and were commented on earlier.
Clause (\ref{i5}) guarantees that ${\pi}_{\beta, \alpha}$ is an RK map from ${\cu}_{\beta}$ to ${\cu}_{\alpha}$ whenever $\alpha \leq \beta$.
This is because if $\cu,\mathcal V$ are ultrafilters on $\o$ and $f\in\o^{\o}$ is such that $f''b\in \cu$ for every $b\in \mathcal V$, then $f$ witnesses that $\cu \; {\leq}_{RK} \; \cv$.
Clause (\ref{i0}) says that if $\alpha \leq \beta \leq \gamma$, then ${\pi}_{\gamma, \alpha} = {\pi}_{\beta, \alpha} \circ {\pi}_{\gamma, \beta}$ modulo a set in ${\cu}_{\gamma}$.
This type of commuting of RK maps is unavoidable in a chain.
Clause (\ref{i6}) makes the map ${\pi}_{\beta, \alpha}$ increasing on a set in ${\cu}_{\beta}$; this makes ${\cu}_{\alpha} \; {\leq}^{+}_{RB} \; {\cu}_{\beta}$.
The fact that ${\pi}_{\beta, \alpha}$ is constant on ${c}^{\beta}_{i}(n)$ for almost all $n$ is helpful for killing unwanted Tukey maps.

Clauses (\ref{i1}) and (\ref{i8}) deal with the prediction of requirements imposed by future ultrafilters.
To understand (\ref{i1}), suppose for simplicity that $\langle {\cu}_{n}: n < \omega \rangle$ has already been built and that ${\cu}_{\omega}$ is being built.
At a certain stage you have decided to put $\sset(d) \in {\cu}_{\omega}$, for some $d \in \P$, and you have also decided the  sequence of RK maps $\langle {\pi}_{\omega, i}: i \leq n \rangle$, for some $n \in \omega$.
In particular ${\pi}_{\omega, n}''\sset(d) \in {\cu}_{n}$.
Now you wish to decide ${\pi}_{\omega, n + 1}$ and you are permitted to extend $d$ to some ${d}^{\ast} \leq d$ in the process.
But you must ensure that ${\pi}_{\omega, n + 1}''\sset({d}^{\ast}) \in {\cu}_{n + 1}$ and that ${\pi}_{\omega, n }$ commutes through ${\pi}_{\omega, n + 1}$.
Clause (\ref{i1}) says that ${\cu}_{n + 1}$ anticipated this requirement and that there is a $b \in {\cu}_{n + 1}$ (in fact cofinally many $b$) that allows this requirement to be fulfilled.
Next to understand (\ref{i8}), suppose that $\langle {\cu}_{\alpha}: \alpha < {\omega}_{1} \rangle$ has been built and that you are building ${\cu}_{{\omega}_{1}}$.
At some stage you have determined that $\langle \sset({d}_{n}): n < \omega \rangle \subset {\cu}_{{\omega}_{1}}$, for some decreasing sequence of conditions $\langle {d}_{n}: n < \omega \rangle \subset \P$.
You have also determined the sequence $\langle {\pi}_{{\omega}_{1}, n}: n < \omega \rangle$.
In particular $\forall n, m < \omega \left[ {\pi}_{{\omega}_{1}, n}''\sset({d}_{m}) \in {\cu}_{n} \right]$, and each ${\pi}_{{\omega}_{1}, n}$ has the right form on some ${d}_{m}$.
Now you would like to find a ${d}^{\ast} \in \P$ that is below all of the ${d}_{n}$.
You would also like to determine ${\pi}_{{\omega}_{1}, \omega}$.
But you must ensure that ${\pi}_{{\omega}_{1}, \omega}''\sset({d}^{\ast}) \in {\cu}_{\omega}$, that ${\pi}_{{\omega}_{1}, \omega}$ has the appropriate form on ${d}^{\ast}$, and that all of the ${\pi}_{{\omega}_{1}, n}$ commute through ${\pi}_{{\omega}_{1}, \omega}$.
Clause (\ref{i8}) says that ${\cu}_{\omega}$ anticipated this requirement and that there is a $b \in {\cu}_{\omega}$ (in fact cofinally many $b$) enabling you to find such a ${d}^{\ast}$ and ${\pi}_{{\omega}_{1}, \omega}$.
\begin{definition}\label{d22}
Let $\d\le \o_2$ . We call $\seq{\seq{c^{\a}_i:i<\c\wedge\a<\d},\seq{\pi_{\b,\a}:\a\le\b<\d}}$ \emph{$\d$-generic} if and only if:
\begin{enumerate}
\item\label{i2} for every $\a<\d$, $\seq{c_i^{\a}:i<\c}$ is a decreasing sequence in $\P$;
\item\label{i4} for every $\a<\d$, $\mathcal U_{\a}=\set{a\in \cp(\o):\exists i<\c\ [\sset(c^{\a}_i)\subset^*a]}$ is an ultrafilter on $\omega$ and it is a rapid P-point (we say that $\mathcal U_{\a}$ is \emph{generated by} $\seq{c_i^{\a}:i<\c}$);
\item\label{i1} for every $\a<\b<\d$, every normal triple $\seq{\pi_1,\psi_1,b_1}$ and every $d\le b_1$ if $\pi''_1\sset(d)\in\cu_{\a}$, then for every $a\in\cu_{\b}$ there is $b\in\cu_{\b}$ such that $b\subset^* a$ and that there are $\pi,\psi\in\o^{\o}$ and $d^*\le_0 d$ so that $\seq{\pi,\psi,d^*}$ is a normal triple, $\pi''\sset(d^*)=b$ and $\forall k\in \sset(d^*)\ [\pi_1(k)=\pi_{\b,\a}(\pi(k))]$.
\item\label{i9} if $\a<\b<\d$, then $\cu_{\b}\nleq_T\cu_{\a}$.
\item\label{i3} for every $\a<\d$, $\pi_{\a,\a}=\id$ and:
\begin{enumerate}
\item\label{i5} $\forall \a\le \b<\d\ \forall i<\c\ [\pi_{\b,\a}''\sset(c^{\b}_{i})\in \mathcal U_{\a}]$;
\item\label{i0} $\forall \a\le \b\le \g<\d\ \exists i<\c\ \forall^{\infty} k\in \sset(c_i^{\g})\ [\pi_{\g,\a}(k)=\pi_{\b,\a}(\pi_{\g,\b}(k))]$;
\item\label{i6} for $\a<\b<\d$ there are $i<\c$, $b_{\b,\a}\in\P$ and $\psi_{\b,\a}\in \o^{\o}$ such that $\seq{\pi_{\b,\a},\psi_{\b,\a},b_{\b,\a}}$ is a normal triple and $c^{\b}_i\le b_{\b,\a}$;
\end{enumerate}
\item\label{i8} if $\mu<\d$ is a limit ordinal such that $\cf(\mu)=\o$, $X\subset \mu$ is such that $\sup(X)=\mu$, $\seq{d_j:j<\o}$ is a decreasing sequence of conditions in $\P$, $\seq{\pi_{\a}:\a\in X}$ is a sequence of maps in $\o^{\o}$ such that:
    \begin{enumerate}
    \item\label{i21} $\forall \a\in X\ \forall j<\o\ [\pi_{\a}''\sset(d_j)\in \mathcal U_{\a}]$;
    \item\label{i22} $\forall \a,\b\in X\ [\a\le \b\sledi \exists j<\o\ \forall^{\infty} k\!\in\!\sset(d_j)\ [\pi_{\a}(k)\!=\!\pi_{\b,\a}(\pi_{\b}(k))]]$;
    \item\label{i23} for all $\a\in X$ there are $j<\o$, $b_{\a}\in \P$ and $\psi_{\a}\in\o^{\o}$ such that $\seq{\pi_{\a},\psi_{\a},b_{\a}}$ is a normal triple and $d_j\le b_{\a}$;
    \end{enumerate}
    then the set of all $i^*<\c$ such that there are $d^*\in \P$ and $\pi\in\o^{\o}$ satisfying:
    \begin{enumerate}[resume]
    \item\label{i31} $\forall j<\o\ [d^*\le d_j]$ and $\sset(c^{\mu}_{i^*})=\pi''\sset(d^*)$;
    \item\label{i32} $\forall \a\in X\ \forall^{\infty} k\in \sset(d^*)\ [\pi_{\a}(k)=\pi_{\mu,\a}(\pi(k))]$;
    \item\label{i34} there is $\psi$ for which $\seq{\pi,\psi,d^*}$ is a normal triple;
    \end{enumerate}
    is cofinal in $\c$;
\end{enumerate}
\end{definition}
When $\CH$ is replaced with $\MA$, the notion of a $\delta$-generic sequence would be defined for every $\delta \leq \c$.
Clause (\ref{i8}) would need to be strengthened by allowing $\mu$ to be any limit ordinal such that $\cf(\mu) < \c$ and by allowing the decreasing sequence of conditions in $\P$ to be of length $\cf(\mu)$. 
\begin{remark}\label{r4}
Suppose $S=\seq{\seq{c^{\a}_i:i<\c\wedge\a<\d},\seq{\pi_{\b,\a}:\a\le\b<\d}}$ is a $\d$-generic sequence for some limit ordinal $\d\le \o_2$.
For every ordinal $\xi<\d$ let $S\rest\xi$ denote $\seq{\seq{c^{\a}_i:i<\c\wedge\a<\xi},\seq{\pi_{\b,\a}:\a\le\b<\xi}}$. We point out that if $S\rest \xi$ is $\xi$-generic for every $\xi<\d$, then $S$ is $\d$-generic. To see this we check conditions (\ref{i2}-\ref{i8}) of Definition \ref{d22}. Conditions (\ref{i2}) and (\ref{i4}) are true because for a fixed $\a<\d$ we can pick $\xi$ so that $\a<\xi<\d$. Then $S\rest\xi$ witnesses that $\cu_{\a}$ and $\seq{c^{\a}_i:i<\c}$ are as needed. For (\ref{i1}), (\ref{i9}) and (\ref{i6}) take $\a<\b<\d$. There is $\xi$ such that $\b<\xi<\d$ and $S\rest\xi$ witnesses (\ref{i1}), (\ref{i9}) and (\ref{i6}). For (\ref{i5}) and (\ref{i0}) take $\a\le\b\le\g<\d$. Again there is $\xi$ so that $\g<\xi<\d$ and $S\rest\xi$ witnesses both (\ref{i5}) and (\ref{i0}). We still have to prove condition (\ref{i8}), so assume that all the objects are given as in (\ref{i8}). In this case we also pick $\xi$ such that $\mu<\xi<\d$. Then $S\rest\xi$ already has all the information about the assumed objects. So $S\rest\xi$ knows that the set of $i^*<\c$ such that $c^{\mu}_{i^*}$ has the required properties is cofinal in $\c$ which implies that (\ref{i8}) is also satisfied in $S$.
\end{remark}

\section{Main lemmas} \label{sec:mainlemmas}
In this section we prove several crucial lemmas that will be used in Section \ref{sec:top} for proving things about the partial order ${\Q}^{\delta}$ to be defined there.
\begin{definition}\label{d44}
Let $\seq{L,\prec}$ be a finite linear order. For each $i<\abs{L}$ let $L(i,\prec)$ denote the $i$th element of $\seq{L,\prec}$. More formally, if $\seq{L,\prec}$ is a finite linear order, then there is a unique order isomorphism $o:\abs{L}\to L$ and $L(i,\prec)=o(i)$ ($i<\abs{L}$).
\end{definition}
Lemma \ref{t7} will be used to prove Lemmas \ref{t30} and \ref{t101}.
It essentially says that the sets ${D}_{m}$ can be broken into intervals of the form ${D}_{m}[{K}_{m, n}, {K}_{m, n + 1})$, for $m \leq n < \omega$, in such a way that whenever $m < m' \leq n$, the image of ${D}_{m'}[{K}_{m', n}, {K}_{m', n + 1})$ under ${\pi}_{m', m}$ comes after everything in the interval ${D}_{m}[{K}_{m, n - 1}, {K}_{m, n})$. 
The use of the elementary submodel $M$ is only for convenience.
We just need a way of saying that each ${D}_{n}$ diagonalizes a ``large enough'' collection of sets from ${\cu}_{n}$.
$M \cap {\cu}_{n}$ is a convenient way to specify this collection.
The use of Lemma \ref{t7} simplifies the presentation of the proofs of Lemmas \ref{t30} and \ref{t101}.
There is no direct analogue of Lemma \ref{t7} when $\CH$ is replaced by $\MA$.
So the proofs of the analogues of Lemmas \ref{t30} and \ref{t101} under $\MA$ will be less elementary. 
\begin{lemma}\label{t7}
Let $\seq{\cu_n:n<\o}$ be a sequence of distinct rapid P-points.
Assume that $\bar \pi=\seq{\pi_{m,n}:n\le m<\o}$ is a sequence of maps in $\o^{\o}$ such that $\pi_{n,n}=\id$ ($n<\o$) and:
\begin{enumerate}
\item\label{i57} $\forall n\le m<\o\ \forall a\in\mathcal U_m\ [\pi''_{m,n}a\in\mathcal U_n]$;
\item\label{i58} $\forall n\le m\le k<\o\ \exists a\in\mathcal U_k\ \forall l\in a\ [\pi_{k,n}(l)=\pi_{m,n}(\pi_{k,m}(l))]$;
\item\label{i59} $\forall n\le m<\o\ \exists a\in\mathcal U_m\ \forall x,y\in a\ [x\le y\sledi \pi_{m,n}(x)\le \pi_{m,n}(y)]$.
\end{enumerate}
Let $\seq{E_n:n<\o}$ be a sequence such that $E_n\in \cu_n$ ($n<\o$). Suppose also that $f\in \o^{\o}$ is increasing and that $M$ is a countable elementary submodel of $H_{\br{2^{\c}}^+}$ containing $\seq{\cu_n:n<\o}$, $\seq{E_n:n<\o}$, $\bar \pi$ and the map $f$. If $\bar D=\seq{D_n:n<\o}$ is a sequence such that $D_n\in \cu_n$ ($n<\o$) and $D_n\subset^* A$, for every $A\in \cu_n\cap M$, then there are sequences $\seq{C_n:n<\o} \in M$, $\seq{F_n:n<\o} \in M$, $\bar K=\seq{K_{m,n}:m\le n<\o} \subset \omega$ and $g' \in {\omega}^{\omega}$ such that for every $n<\o$ we have $\forall m\le n\ [K_{m,n}>0]$, $C_n,F_n\in\cu_n$, $C_n\subset E_n$, $F_n=C_n\cap \pi_{n+1,n}''C_{n+1}$, $\forall m\le m'\le n\ \forall v\in C_n\ [\pi_{n,m}(v)=\pi_{m',m}(\pi_{n,m'}(v))]$, $\forall m\le n\ \forall v,v'\in C_n\ [v\le v'\sledi \pi_{n,m}(v)\le \pi_{n,m}(v')]$, and letting $\rho=\seq{\bar D,\bar K,\bar \pi}$ (see Remark \ref{r1}):
\begin{enumerate}[resume]
\item\label{i51} $\forall m\le n\ \exists z\in F_n\cap F_n(g'(n))\ [\pi_{n,m}(z)=D_m(K_{m,n}-1)]$ and if $n>0$ then $\forall m<n\ [K_{m,n}>K_{m,n-1}]$;
\item\label{i53} $\forall m\le n\ [D_m[K_{m,n}]\subset \pi''_{n,m}\br{F_n\setminus F_n(g'(n))}]$;
\item\label{i54} $\forall m\le n\ \forall v\in C_n\setminus F_n(g'(n))\ [\pi_{n,m}(v)>D_m(K_{m,n}-1)]$;
\item\label{i55} $\forall m<n\ [D_n[K_{n,n}]\cap D_m=0]$ and if $n>0$, then for every $x\in \Delta^{\rho}_{n-1}$, there is a unique $m<n$ such that $x\in D_{m}[K_{m,n-1},K_{m,n})$;
\item\label{i56} if $n>0$, then define ${\prec}_{n - 1}$ to be the collection of all $\langle x, y \rangle$ such that $x, y \in \Delta^{\rho}_{n-1}$ and $\max \set{z\in F_{n-1}:\pi_{n-1,m}(z)=x}<\max\set{z\in F_{n-1}:\pi_{n-1,m'}(z)=y}$ -- where $m,m'<n$ are unique with the property that $x\in D_{m}[K_{m,n-1},K_{m,n})$ and $y\in D_{m'}[K_{m',n-1},K_{m',n})$; then $\prec_{n-1}$ is a linear order on $\Delta^{\rho}_{n-1}$;
\item\label{i52} $2g'(n)\ge L^{\rho}_n$;
\item\label{i60} if $n > 0$, then define the following notation: let ${R}_{n - 1}=\abs{{\Delta}^{\rho}_{n-1}}$ and for each $j < {R}_{n-1}$, let ${x}^{n-1}_{j}$ be ${\Delta}^{\rho}_{n-1}(j,{\prec}_{n-1})$, let $m(n - 1, j)$ be the unique $m < n$ such that ${x}^{n-1}_{j} \in {D}_{m}[{K}_{m,n-1},{K}_{m,n})$, and let
\begin{align*}
{z}^{n-1}_{j}=\max\set{z\in {F}_{n-1}: {\pi}_{n-1,m(n-1,j)}(z)={x}^{n-1}_{j}}; 
\end{align*}
then there exists $l\ge f\br{{L}^{\rho}_{n-1} + j}$ such that ${z}^{n-1}_{j}={E}_{n-1}(l)$.
\end{enumerate}
\end{lemma}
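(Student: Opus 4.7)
The plan is to split the proof into two stages: Stage 1 works inside $M$ to construct $\langle C_n\rangle, \langle F_n\rangle \in M$ together with auxiliary thinning sets $\langle G_n\rangle\in M$, and Stage 2 works outside $M$ to build $\bar K$ and $g'$ by recursion on $n$, exploiting the diagonalizing hypothesis $D_n\subset^* A$ for every $A\in \mathcal U_n\cap M$.

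Stage 1 is routine: working in $M$, recursively pick $C_n\in \mathcal U_n$ with $C_n\subseteq E_n$, contained in witnesses of hypotheses (\ref{i58}) and (\ref{i59}) for all pairs $m\le m'\le n$, and arranged so that $F_n:=C_n\cap \pi''_{n+1,n}(C_{n+1})\in\mathcal U_n$. Then apply Lemma \ref{t33} inside $M$ to the function $k\mapsto E_n(f(2^k))$ to obtain $G_n\in \mathcal U_n\cap M$ with $G_n\subseteq F_n$ and $G_n(k)\ge E_n(f(2^k))$ for every $k$. Elementarity places all these sequences in $M$.

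For Stage 2, recurse on $n$. At stage $n$, use Claim \ref{t102} to pick $K_{n,n}>0$ with $D_n[K_{n,n}]\cap D_m=\emptyset$ for every $m<n$, securing the disjointness in (\ref{i55}); uniqueness in (\ref{i55}) then follows automatically. For $m<n$, let $K_{m,n}$ be the least integer above $K_{m,n-1}$ such that $D_m(K_{m,n}-1)\in\pi''_{n,m}(F_n)$; this is well-defined because $D_m\subset^* \pi''_{n,m}(F_n)$. Taking $K_{m,0}$ sufficiently large at the outset, this singleton-style choice keeps $R_{n-1}\le n$ and hence $L^\rho_n=O(n^2)$. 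The linearity of $\prec_{n-1}$ in (\ref{i56}) follows from monotonicity of $\pi_{n-1,m}$ on $F_{n-1}$ together with the finiteness of each preimage. Conditions (\ref{i51}), (\ref{i53}), (\ref{i54}), and (\ref{i52}) can then be met by picking $g'(n)$ large enough.

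The main obstacle will be (\ref{i60}). To handle it, the diagonalizing hypothesis gives $D_n\subset^* G_n$; combining this with the monotonicity of $\pi_{n-1,m}$ on $F_{n-1}$ shows that if $y\in D_m$ has index $k$ in $\pi''_{n-1,m}(G_{n-1})\in\mathcal U_m\cap M$, then the max preimage of $y$ in $F_{n-1}$ sits at index $\ge f(2^k)$ in $E_{n-1}$. Since every $x^{n-1}_j\in\Delta^\rho_{n-1}$ has index in its ambient $D_m$ at least $K_{m,n-1}$, and this grows at least linearly in $n$ under the singleton strategy, the max preimage $z^{n-1}_j$ ends up at index $\ge f(2^{K_{m,n-1}})\ge f(L^\rho_n)\ge f(L^\rho_{n-1}+j)$ in $E_{n-1}$, which is precisely what (\ref{i60}) demands. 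The delicate accounting lies in matching the rapidity exponent $2^{K_{m,n-1}}$ against the polynomial bound on $L^\rho_n$; the logarithmic slack makes this straightforward.
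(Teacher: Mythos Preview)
Your two-stage outline is reasonable, and Stage~1 is essentially what the paper does (with one difference noted below). The problem is in Stage~2: the ``singleton strategy'' for $K_{m,n}$ is incompatible with condition~(\ref{i53}), and your claim that (\ref{i53}) ``can be met by picking $g'(n)$ large enough'' is backwards.

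Condition~(\ref{i53}) demands that the \emph{entire tail} $D_m[K_{m,n}]$ lie in $\pi''_{n,m}\bigl(F_n\setminus F_n(g'(n))\bigr)=\pi''_{n,m}\bigl(F_n[g'(n)]\bigr)$. Your choice of $K_{m,n}$ as ``least above $K_{m,n-1}$ with $D_m(K_{m,n}-1)\in\pi''_{n,m}(F_n)$'' only puts a single point in the image; nothing stops $D_m(K_{m,n}), D_m(K_{m,n}+1),\dots$ from falling outside $\pi''_{n,m}(F_n)$. More to the point, since $D_m\subset^*\pi''_{n,m}(F_n)$ only guarantees containment past some threshold $T_{m,n}$ that depends on $n$ and is not a~priori bounded, any scheme that forces $K_{m,n}-K_{m,n-1}$ to be small will fail~(\ref{i53}) as soon as $T_{m,n}$ outruns your $K_{m,n}$. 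And enlarging $g'(n)$ \emph{shrinks} $F_n[g'(n)]$, so it makes (\ref{i53}) harder, not easier. Once (\ref{i53}) forces $K_{m,n}\ge T_{m,n}$, the jumps $K_{m,n}-K_{m,n-1}$ are uncontrolled, $L^\rho_n$ is no longer $O(n^2)$, and your exponential-rapidity bookkeeping for (\ref{i60}) collapses. (There is a second, related gap in your (\ref{i60}) argument: you conflate the index of $x^{n-1}_j$ in $D_m$ with its index in $\pi''_{n-1,m}(G_{n-1})$; the discrepancy $|D_m\setminus\pi''_{n-1,m}(G_{n-1})|$ again depends on $n$.)

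The paper resolves this by reversing the dependency. At step $n+1$ it first commits to a large $K_{0,n+1}$ (absorbing all accumulated slack $x^*=L^\rho_n+\sum_{m\le n}(X_{m,n+1}-K_{m,n})$), uses it to fix $g'(n+1)$, and \emph{only then} defines each $K_{m,n+1}$ as the least index with $D_m[K_{m,n+1}]\subset\pi''_{n+1,m}\bigl(F_{n+1}[g'(n+1)]\bigr)$, so that (\ref{i53}) holds by construction. Condition~(\ref{i52}) is then obtained by a counting argument: elements of $\Delta^\rho_n$ are injected (via preimages under $\pi_{n+1,m}$) into $F_{n+1}\cap F_{n+1}(g'(n+1))$, which has size $g'(n+1)$. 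With (\ref{i52}) in hand, only \emph{linear} rapidity $C_n(k)\ge E_n(f(2k))$ is needed, and (\ref{i60}) follows from $z^{n}_j=F_n(t)$ with $t\ge g'(n)+j$, whence $2t\ge L^\rho_n+j$ and $F_n(t)\ge C_n(t)\ge E_n(f(2t))$.
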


\begin{proof}
First pick a sequence $\bar A=\seq{A_{n,m,k}:n\le m\le k<\o}\in M$ such that $A_{n,m,k}\in \cu_k$ for each $n\le m\le k<\o$ and $\pi_{k,n}(v)=\pi_{m,n}(\pi_{k,m}(v))$ for every $v\in A_{n,m,k}$. Similarly, pick a sequence $\bar B=\seq{B_{n,m}:n\le m<\o}\in M$ so that $B_{n,m}\in \cu_m$, for every $n\le m<\o$ and $\forall x,y\in B_{n,m}\ [x\le y\sledi \pi_{m,n}(x)\le \pi_{m,n}(y)]$. Let us now define sequence $\seq{C'_k:k<\o}$ so that for every $k<\o$ we have $C'_k=E_k\cap \br{\bigcap_{n\le m\le k}A_{n,m,k}}\cap \br{\bigcap_{m\le k}B_{m,k}}$. Note that $C'_k\in\mathcal U_k$ and that $C'_k\subset E_k$ for every $k<\o$. Moreover, $\seq{C'_k:k<\o}\in M$. Next, choose $\bar C=\seq{C_k:k<\o}\in M$ so that for $k<\o$: $C_k\in \cu_k$, $C_k\subset C'_k$ and $C_k(n)\ge E_k(f(2n))$ for every $n<\o$. Let $F_n$ be the set $C_n\cap \pi''_{n+1,n}C_{n+1}$ ($n<\o$). Note that $\seq{F_n:n<\o}$ belongs to $M$. Note also that for each $m\le m'\le n<\o$, $C_n\subset A_{m,m',n}$. So $\forall v\in C_n\ [\pi_{n,m}(v)=\pi_{m',m}(\pi_{n,m'}(v))]$ as required in the statement of the lemma. Moreover if $m\le n$, then $C_n\subset B_{m,n}$, implying that $\forall v,v'\in C_n\ [v\le v'\sledi \pi_{n,m}(v)\le \pi_{n,m}(v')]$ as needed. Then by the definition of sets $D_n$ ($n<\o$) we have $D_n\subset^* \pi_{m,n}''F_m$ for $m\ge n$. Also, for $n<\o$, let $Y_n$ be minimal so that $\forall m<n\ [D_n[Y_n]\cap D_m=0]$.

Now that we have chosen sets $C_n$ and $F_n$ ($n<\o$) we construct, by induction on $n$, numbers $K_{m,n}$ and $g'(n)$ ($m\le n<\o$). First let $l'$ be the least number such that $D_0[l']\subset F_0$ and let $K_{0,0}=l'+1$. Then define $g'(0)$ to be the least $l$ so that $F_0(l)\ge D_0(K_{0,0})$. Note that properties (\ref{i51}-\ref{i60}) hold for $n=0$. Now assume that for every $m\le m'\le n$ we have defined numbers $K_{m,m'}$ and $g'(m')$. We will define $K_{m,n+1}$ ($m\le n+1$) and $g'(n+1)$. So for every $m\le n$ let $X_{m,n+1}$ be the least number such that $X_{m,n+1}\ge K_{m,n}$ and $D_m[X_{m,n+1}]\subset \pi''_{n+1,m}F_{n+1}$, while $X_{n+1,n+1}$ is defined to be minimal such that $D_{n+1}[X_{n+1,n+1}]\subset F_{n+1}=\pi_{n+1,n+1}''F_{n+1}$ and $X_{n+1,n+1}\ge Y_{n+1}$. Put ${x}^{\ast} = L^{\rho}_n + \sum_{m\le n}(X_{m,n+1}-K_{m,n})$. Now define $K_{0,n+1}$ to be the minimal $l$ such that $l > X_{0,n+1} + {x}^{\ast}$ and that $D_0(l-1)\in \bigcap_{m\le n+1}\pi''_{m,0}\br{D_{m}[X_{m,n+1}]}$. Next define $g'(n+1)$ to be the minimal $l\in\o$ with $\pi_{n+1,0}(F_{n+1}(l))\ge D_0(K_{0,n+1})$. Observe that if $v\in F_{n+1}\cap F_{n+1}(g'(n+1))$, then $\pi_{n+1,0}(v)<D_0(K_{0,n+1})$. Also if $v\in F_{n+1}[g'(n+1)]$, then $\pi_{n+1,0}(v)\ge \pi_{n+1,0}(F_{n+1}(g'(n+1)))\ge D_0(K_{0,n+1})$.
Put $G = {F}_{n + 1}\left[g'(n + 1)\right]$ for convenience.
Now for $0<m\le n+1$ define $K_{m,n+1}$ as the minimal $l\in\o$ so that $D_m[l]\subset \pi_{n+1,m}''G$. We remark that $K_{0,n+1}$ is also minimal such that $D_0[K_{0,n+1}]\subset \pi_{n+1,0}''G$. To see this, take any $w\in D_0[K_{0,n+1}]$. Since $K_{0,n+1}\ge X_{0,n+1}$, there exists $v\in F_{n+1}$ with $\pi_{n+1,0}(v)=w$. By the first observation above, $v\notin F_{n+1}(g'(n+1))$. Hence $v\in G$ showing that $D_0[K_{0,n+1}]\subset \pi_{n+1,0}''G$. By the second observation above, there is no $v\in G$ with $\pi_{n+1,0}(v)=D_0(K_{0,n+1}-1)$. Hence there is no $l<K_{0,n+1}$ satisfying $D_0[l]\subset \pi_{n+1,0}''G$.

Now we prove that (\ref{i51}-\ref{i60}) are fulfilled for $n+1$. We begin with the second part of (\ref{i51}). Fix $m\le n+1$. By the definition of $K_{0,n+1}$, there exists $u\in D_m[X_{m,n+1}]$ such that $\pi_{m,0}(u)=D_0(K_{0,n+1}-1)$. We claim $u<D_m(K_{m,n+1})$. Suppose not. Then $u\in D_m[K_{m,n+1}]$, and so $u=\pi_{n+1,m}(v)$, for some $v\in G$. However $\pi_{n+1,0}(v)=\pi_{m,0}(\pi_{n+1,m}(v))=\pi_{m,0}(u)=D_0(K_{0,n+1}-1)$, contradicting an observation of the previous paragraph. Thus $u<D_m(K_{m,n+1})$, showing that $X_{m,n+1}<K_{m,n+1}$, for all $m\le n+1$.

Before proving the rest of (\ref{i51}), (\ref{i53}), and (\ref{i54}) we make some useful observations. Put $y_m=D_m(K_{m,n+1}-1)$, for each $m\le n+1$. Let $m\le n+1$ be fixed. First since $K_{m,n+1}$ is minimal so that $D_m[K_{m,n+1}]\subset \pi_{n+1,m}''G$, there is no $v\in G$ with $\pi_{n+1,m}(v)=y_m$. Next let $u\in D_m[K_{m,n+1}]$. Then there exists $v\in G$ with $u=\pi_{n+1,m}(v)$, and $\pi_{m,0}(u)=\pi_{m,0}(\pi_{n+1,m}(v))=\pi_{n+1,0}(v)\ge D_0(K_{0,n+1})$. Thus $\pi_{m,0}(u)\ge D_0(K_{0,n+1})$, for every $u\in D_m[K_{m,n+1}]$. For the final observation, consider some $v\in F_{n+1}\cap F_{n+1}(g'(n+1))$. As pointed out before, $\pi_{n+1,0}(v)<D_0(K_{0,n+1})$. Now let $u=\pi_{n+1,m}(v)$. Then $\pi_{m,0}(u)=\pi_{n+1,0}(v)<D_0(K_{0,n+1})$. Applying the previous observation to $u$, we conclude that $\pi_{n+1,m}(v)\notin D_m[K_{m,n+1}]$, for every $v\in F_{n+1}\cap F_{n+1}(g'(n+1))$.

Now the rest of (\ref{i51}), (\ref{i53}), and (\ref{i54}) easily follow from the three observations in the previous paragraph. For the first part of (\ref{i51}), $y_m\in D_m[X_{m,n+1}]$, and so there is $v_m\in F_{n+1}$ with $y_m=\pi_{n+1,m}(v_m)$. By the first observation, $v_m\notin G$. Hence $v_m\in F_{n+1}\cap F_{n+1}(g'(n+1))$, as needed. For (\ref{i53}), let $u\in D_m[K_{m,n+1}]$. Then $u\in D_m[X_{m,n+1}]$ and so there is $v\in F_{n+1}$ with $\pi_{n+1,m}(v)=u$. By the third observation, $v\notin F_{n+1}(g'(n+1))$, as required. For (\ref{i54}), first note that since $v_m,F_{n+1}(g'(n+1))\in B_{m,n+1}$, $\pi_{n+1,m}(F_{n+1}(g'(n+1)))\ge y_m$, and by the first observation, $\pi_{n+1,m}(F_{n+1}(g'(n+1)))>y_m$. Now let $v\in C_{n+1}$. Then $v\in B_{m,n+1}$ and if $v\ge F_{n+1}(g'(n+1))$, then $\pi_{n+1,m}(v)>y_m$, implying (\ref{i54}).

For (\ref{i55}) we have $Y_{n+1}\le X_{n+1,n+1}<K_{n+1,n+1}$, and so $D_{n+1}[K_{n+1,n+1}]\cap D_m=0$, for all $m<n+1$. The second part of (\ref{i55}) easily follows from the definition of $\Delta^{\rho}_n$ and from the induction hypotheses.

For (\ref{i56}), first consider any $x\in \Delta^{\rho}_n$. By (\ref{i55}) applied to $n+1$, let $m<n+1$ be unique so that $x\in D_m[K_{m,n},K_{m,n+1})$. By (\ref{i53}) applied to $n$, there is $z\in F_n$ with $\pi_{n,m}(z)=x$. Also $F_n\subset C_n$ and so $\pi_{n,m}$ is finite-to-one on $F_n$. Therefore $\max\set{z\in F_n: \pi_{n,m}(z)=x}$ is well-defined. Next it is clear that $\prec_n$ is transitive and irreflexive. We check that it is total. Let $x,y\in \Delta^{\rho}_n$ and let $m,m'<n+1$ be unique so that $x\in D_m[K_{m,n},K_{m,n+1})$ and $y\in D_{m'}[K_{m',n},K_{m',n+1})$. We may assume $m\le m'$. If $x$ and $y$ are incomparable under $\prec_n$, then there exists $z\in F_n$ so that $\pi_{n,m}(z)=x$ and $\pi_{n,m'}(z)=y$. As $F_n\subset C_n$, $\pi_{m',m}(y)=\pi_{m',m}(\pi_{n,m'}(z))=\pi_{n,m}(z)=x$. If $m<m'$, then this contradicts the fact that $x\in H^{\rho}_{m,n}$. Therefore $m=m'$, and since $\pi_{m',m}=\id$, $x=y$, implying comparability.

Now we check (\ref{i52}). For each $m\!<\!n+1$ define $H^{\rho}_{m,n,0}\!=\!H^{\rho}_{m,n}\cap D_m[K_{m,n},X_{m,n+1})$ and $H^{\rho}_{m,n,1}=H^{\rho}_{m,n}\cap D_m[X_{m,n+1},K_{m,n+1})$. Define ${x}_{0} = \abs{\bigcup_{m\le n}H^{\rho}_{m,n,0}}$ and ${x}_{1} = \abs{\bigcup_{m\le n}H^{\rho}_{m,n,1}}$. It is clear that $H^{\rho}_{m,n}=H^{\rho}_{m,n,0}\cup H^{\rho}_{m,n,1}$ and that $L^{\rho}_{n+1}\le L^{\rho}_n + {x}_{0} + {x}_{1}$. Also $L^{\rho}_n + {x}_{0} \le {x}^{\ast}$. So to prove (\ref{i52}) it is enough to show both ${x}^{\ast} \le g'(n+1)$ and ${x}_{1} \le g'(n+1)$. For the first inequality, note that $\abs{D_0[X_{0,n+1},K_{0,n+1})} \ge {x}^{\ast}$. For each $u\in D_0[X_{0,n+1},K_{0,n+1})$, there exists $v\in F_{n+1}$ with $\pi_{n+1,0}(v)=u$. By (\ref{i54}) applied to $n+1$, $v\in F_{n+1}(g'(n+1))$. It follows that $g'(n + 1) \ge {x}^{\ast}$. For the second inequality, note first that for each $m\le n$ and $u\in H^{\rho}_{m,n,1}$ we get a $v\in F_{n+1}\cap F_{n+1}(g'(n+1))$ with $\pi_{n+1,m}(v)=u$ by applying the same argument. Now suppose $u\neq u'$, $m,m'\le n$, $u\in H^{\rho}_{m,n,1}$, $u'\in H^{\rho}_{m',n,1}$, $v,v'\in F_{n+1}\cap F_{n+1}(g'(n+1))$, $\pi_{n+1,m}(v)=u$, and $\pi_{n+1,m'}(v')=u'$. We would like to see that $v\neq v'$. Suppose not. We may assume $m\le m'$. Since $v\in C_{n+1}$, $u=\pi_{n+1,m}(v)=\pi_{m',m}(\pi_{n+1,m'}(v))=\pi_{m',m}(u')$. If $m<m'$, then this contradicts the fact that $u\in H^{\rho}_{m,n}$. Hence $m=m'$, whence $u=u'$. This is a contradiction which shows that $v\neq v'$. It follows that $g'(n + 1) \ge {x}_{1}$ as needed.

Finally we come to (\ref{i60}). Fix $j<R_n$. By (\ref{i53}) applied to $n$, $z^n_j=F_n(t^n_j)$ for some $t^n_j\ge g'(n)$. Since $\prec_n$ is a linear order $t^n_j\ge g'(n)+j$. By (\ref{i52}) applied to $n$, $2t^n_j\ge 2g'(n)+2j\ge L^{\rho}_n+j$. Now $F_n\subset C_n\subset C'_n\subset E_n$ and $F_n(t^n_j)\ge C_n(t^n_j)\ge E_n(f(2t^n_j))\ge E_n(f(L^{\rho}_n+j))$ because $f$ is an increasing function. It follows that $z^n_j=E_n(l^n_j)$ for some $l^n_j\ge f(L^{\rho}_n+j)$, as needed.
\end{proof}

\begin{remark}\label{r3}
Note that for each $n < \o$, $H^{\rho}_{n,n} = D_n[K_{n,n},K_{n,n+1})$; so $\Delta^{\rho}_n \neq 0$, and so $L^{\rho}_{n+1}>L^{\rho}_n$.
Note also that ${z}^{n}_{j} \geq {F}_{n}(g'(n))$, for each $n < \omega$ and $j < {R}_{n}$.
\end{remark}

Lemma \ref{t30} will play an important role throughout the next section.
It is essential to the proof that ${\Q}^{\delta}$, which will be defined in Definition \ref{d14}, is countably closed.
It is also used in ensuring that ${\cu}_{\delta}$ is a rapid ultrafilter and that ${\cu}_{\delta}$ satisfies (\ref{i1}) and (\ref{i8}) of Definition \ref{d22}.
\begin{lemma}\label{t30}
Assume that $\d<\o_2$, $\cf(\d)=\o$, $f\in \o^{\o}$ is increasing, $X\subset \d$ is such that $\sup(X)=\d$ and:

\begin{enumerate}
\item\label{i71} the sequence $S\!=\!\seq{\seq{c^{\a}_i:i<\c\wedge\a<\d},\seq{\pi_{\b,\a}:\a\le \b<\d}}$ is $\d$-generic;
\item\label{i72} there are $e\in\P$ and mappings $\seq{\pi_{\d,\a}:\a\in X}$ such that:
\begin{enumerate}
\item\label{i73} $\forall \a\in X\ [\pi_{\d,\a}''\sset(e)\in \mathcal U_{\a}]$;
\item\label{i74} $\forall \a,\b\in X\ [\a\le\b\sledi\forall^{\infty}k\in \sset(e)\ [\pi_{\d,\a}(k)=\pi_{\b,\a}(\pi_{\d,\b}(k))]]$;
\item\label{i75} for all $\a\in X$ there are $b_{\d,\a}$ and $\psi_{\d,\a}$ such that $\seq{\pi_{\d,\a},\psi_{\d,\a},b_{\d,\a}}$ is a normal triple and $e\le b_{\d,\a}$;
\end{enumerate}
\item\label{i76} there is a decreasing sequence $\seq{d_j:j<\o}$ of elements of $\P$ and a sequence of mappings $\seq{\pi_{\a}:\a\in X}$ such that:
\begin{enumerate}
\item\label{i77} $\forall \a\in X\ \forall j<\o\ [\pi_{\a}''\sset(d_j)\in \mathcal U_{\a}]$;
\item\label{i78} $\forall \a,\b\in X\ [\a\le\b\sledi\exists j<\o\ \forall^{\infty} k\!\in\!\sset(d_j)\ [\pi_{\a}(k)\!=\!\pi_{\b,\a}(\pi_{\b}(k))]]$;
\item\label{i79} for all $\a\in X$ there are $j<\o$, $\psi_{\a}$ and $b_{\a}$ such that $\seq{\pi_{\a},\psi_{\a},b_{\a}}$ is a normal triple and $d_j\le b_{\a}$.
\end{enumerate}
\end{enumerate}
Then there are $d^*,e^*\in \P$ and $\pi:\o\to\o$ such that:
\begin{enumerate}[resume]
\item\label{i84} $\forall n<\o\ \exists m\ge f(n)\ [e^*(n)\subset e(m)]$;
\item\label{i80} $\forall j<\o\ [d^*\le d_j]$ and $\sset(e^*)=\pi''\sset(d^*)$;
\item\label{i81} $\forall \a\in X\ \forall^{\infty}k\in \sset(d^*)\ [\pi_{\a}(k)=\pi_{\d,\a}(\pi(k))]$;
\item\label{i82} $\forall \a\in X\ [\pi_{\d,\a}''\sset(e^*)\in \mathcal U_{\a}]$;
\item\label{i83} there is $\psi\in\o^{\o}$ for which $\seq{\pi,\psi,d^*}$ is a normal triple.
\end{enumerate}
\end{lemma}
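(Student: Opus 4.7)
The plan is to reduce the problem to an application of Lemma \ref{t7} along a fixed cofinal $\omega$-sequence in $X$, and then assemble $d^*,e^*,\pi$ from the combinatorial output of that lemma. Since $\cf(\d)=\o$ and $\sup(X)=\d$, I first pick a strictly increasing sequence $\seq{\a_n:n<\o}\subset X$ with $\sup_n\a_n=\d$. Clauses (\ref{i4}), (\ref{i5}), (\ref{i0}), (\ref{i6}) of Definition \ref{d22} guarantee that $\seq{\cu_{\a_n}:n<\o}$ is a sequence of distinct rapid P-points and that the maps $\bar\pi=\seq{\pi_{\a_m,\a_n}:n\le m<\o}$ satisfy hypotheses (\ref{i57})--(\ref{i59}) of Lemma \ref{t7}. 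Fix a countable elementary submodel $M\prec H_{(2^\c)^+}$ containing $S$, $e$, $\seq{d_j:j<\o}$, $\seq{\pi_{\d,\a}:\a\in X}$, $\seq{\pi_\a:\a\in X}$, the sequences $\seq{b_{\d,\a},\psi_{\d,\a}}$ and $\seq{b_\a,\psi_\a}$, the enumeration $\seq{\a_n:n<\o}$, and $f$. Replacing $f$ by a faster-growing function if needed, I may also arrange $f$ to dominate any bookkeeping auxiliary I will use.

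Next I choose targets $E_n\in\cu_{\a_n}\cap M$ as a finite intersection that encodes every requirement that $d^*$ and $e^*$ must meet on the $\a_n$-coordinate: $E_n\sse\pi_{\d,\a_n}''\sset(e)$ and $E_n\sse\pi_{\a_n}''\sset(d_n)$, together with sets drawn from the $\bar A,\bar B$ of Lemma \ref{t7} witnessing commutativity of $\pi_{\a_m,\a_n}\circ\pi_{\a_k,\a_m}=\pi_{\a_k,\a_n}$ and the monotonicity of each $\pi_{\a_m,\a_n}$, and such further sets as guarantee that each point of $E_n$ lifts to a point of $\sset(e)$ whose $e$-block index is compatible with the commutativity $\pi_{\d,\a_n}=\pi_{\d,\a_m}\circ\pi_{\a_n,\a_m}$ for $m\le n$, and analogously on the $\sset(d_n)$ side through $\pi_\a=\pi_{\a_m,\a}\circ\pi_{\a_m}$. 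Claim \ref{t102} applied to $\seq{\cu_{\a_n}:n<\o}$ yields a sequence $\bar D=\seq{D_n:n<\o}$ with $D_n\in\cu_{\a_n}$, $D_n\sse E_n$, and $D_n\sse^* A$ for every $A\in\cu_{\a_n}\cap M$. I then feed $\seq{E_n:n<\o}$, $\bar D$, $f$, $M$ and $\bar\pi$ into Lemma \ref{t7}, obtaining $\seq{C_n,F_n:n<\o}\in M$, interval parameters $\bar K=\seq{K_{m,n}:m\le n<\o}$, and a function $g'\in\o^\o$ satisfying (\ref{i51})--(\ref{i60}); set $\r=\seq{\bar D,\bar K,\bar \pi}$.

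With this skeleton in hand, I build $d^*, \pi, \psi, e^*$ simultaneously by running through the ordered blocks $\Delta^\r_n$ of Remark \ref{r1}. Concretely, for each $n<\o$ and each $j<R_n$, the element $z^{n}_j$ of clause (\ref{i60}) of Lemma \ref{t7} lies in $F_n\sse E_n$, so $z^n_j=\pi_{\d,\a_n}(w^n_j)$ for a unique $e$-block index $m^n_j$ with $w^n_j\in e(m^n_j)$, and similarly $z^n_j=\pi_{\a_n}(u^n_j)$ for some $u^n_j$ in a unique block of $d_n$. Clause (\ref{i60}) forces $l^n_j\ge f(L^\r_n+j)$, which via the normal-triple structure of $\seq{\pi_{\d,\a_n},\psi_{\d,\a_n},b_{\d,\a_n}}$ and $e\le b_{\d,\a_n}$ pushes $m^n_j$ above $f(n)$, giving (\ref{i84}). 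I define the $(L^\r_n+j)$-th block of $d^*$ to be a carefully chosen nonempty finite subset of the $d_n$-block containing $u^n_j$, thin enough to lie inside a single block of each $d_k$ for $k\le n$ (possible since $\seq{d_j:j<\o}$ is decreasing) and with strictly increasing sizes and positions, declare $\pi$ to be constantly $w^n_j$ on this block (and $0$ elsewhere), and set $\psi(L^\r_n+j)=w^n_j$. The block $e^*(n)$ is defined so that it enumerates $\set{w^n_j:j<R_n}$ regrouped to respect the required strict increase of block sizes in $\P$; since for each fixed $n$ all the $w^n_j$ are in consecutive $e$-blocks of index $\ge f(n)$, a final pass merges adjacent groups to guarantee $e^*(n)\sse e(m)$ for a single $m\ge f(n)$.

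The main obstacle is verifying that the single choice of $\pi,d^*,e^*,\psi$ simultaneously satisfies (\ref{i80})--(\ref{i83}). That $d^*\le d_j$ for every $j$ follows from Remark \ref{r5}(\ref{i203}) and the fact that for $l$ large the $l$-th block of $d^*$ falls inside a single block of $d_j$. The commutation (\ref{i81}) reduces, block by block, to the pointwise identity $\pi_\a(u)=\pi_{\d,\a}(w)$ on the chosen $u\in d_n$-block and $w\in e$-block; for $\a=\a_n$ this is immediate from the way $u^n_j$ and $w^n_j$ were pulled back from $z^n_j$, and for general $\a\in X$ it follows from the commutativity sets included in $E_n$ combined with (\ref{i74}) and (\ref{i78}), eventually in $k$. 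Property (\ref{i82}) holds because $\pi_{\d,\a_n}''\sset(e^*)\supseteq\set{z^n_j:j<R_n\wedge n<\o}$ is, modulo finite, equal to the thinning of $D_n$ along $\pi_{\a_m,\a_n}$ for $m\ge n$, which by the choice of $D_n$ lies in $\cu_{\a_n}$; commutation extends this to arbitrary $\a\in X$. Finally, (\ref{i83}) is immediate by construction, since $\pi$ was defined to be constantly $\psi(l)$ on the $l$-th block of $d^*$ and $\psi$ is nondecreasing with infinite range.
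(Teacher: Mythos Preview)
Your overall plan---pass to a cofinal $\omega$-sequence in $X$, set up targets $E_n$, apply Lemma \ref{t7}, and read off $d^*,e^*,\pi$ from the output---is the same as the paper's. However, the way you assemble $e^*$ and $\pi$ has a genuine gap that breaks condition (\ref{i84}).

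You declare $\pi$ to be constantly $w^n_j$ on the $(L^\r_n+j)$-th block of $d^*$, where $w^n_j$ is a \emph{single point} of $e(m^n_j)$ chosen so that $\pi_{\d,\a_n}(w^n_j)=z^n_j$. Then $\sset(e^*)=\pi''\sset(d^*)=\{w^n_j:n<\o,\,j<R_n\}$ is a set of isolated points, one per $(n,j)$. But by the normal-triple structure of $\seq{\pi_{\d,\a_n},\psi_{\d,\a_n},b_{\d,\a_n}}$ together with $e\le b_{\d,\a_n}$, the map $\pi_{\d,\a_n}$ is constant on all but finitely many $e$-blocks; hence distinct $z^n_j$'s force distinct $e$-block indices $m^n_j$, and so each $e$-block contains at most one $w^n_j$. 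Consequently any $e^*$-block contained in a single $e(m)$ can have at most one element, contradicting $e^*\in\P$ (which needs $|e^*(k)|\ge k+1$). Your proposed ``final pass'' to ``merge adjacent groups'' into a single $e(m)$ is therefore impossible, not just unspecified.

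The paper resolves this by reversing the roles of ``one point'' and ``one block.'' Instead of picking a single preimage $w^n_j$, it picks the \emph{entire} $e$-block $e(\zeta^n_j)$ (where $\zeta^n_j=\max\{m:\pi_{\d,\a_n}''e(m)=\{z^n_j\}\}$, via Lemma \ref{t14}) and sets $e^*(L^\r_n+j)\in[e(\zeta^n_j)]^{L^\r_n+j+1}$. The growth function $g(n)=\max\{f(n),t(n),s(n+1)\}$ fed into Lemma \ref{t7} (rather than $f$ itself) guarantees $\zeta^n_j\ge g(L^\r_n+j)$, so the $e$-block is large enough. Correspondingly, $d^*$ must be refined: a \emph{single} pair $(n,j)$ gives rise to $m'+1$ blocks of $d^*$ (for $s(m')\le l<s(m'+1)$, $m'=L^\r_n+j$), each a subset of $d_{j_n}(\z^n_j)$, with $\pi$ sending $d^*(l)$ to the $l$-th element of $\sset(e^*)$. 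The $t(\cdot)$ term in $g$ ensures $|d_{j_n}(\z^n_j)|\ge t(m')$, exactly what is needed to carve out these pieces. Your sketch has the indexing of $d^*$ and $e^*$ in lockstep, which cannot produce a normal triple $\seq{\pi,\psi,d^*}$ with $\pi''\sset(d^*)=\sset(e^*)$ and $e^*\in\P$ simultaneously satisfying (\ref{i84}).
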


\begin{proof}
Let $\seq{\d_n:n<\o}\subset X$ be an increasing and cofinal sequence in $\d$. For $m\le n<\o$ choose $j_{m,n}$ and $L^d(m,n)$ so that $\forall k\!\in\! \sset(d_{j_{m,n}})\lp L^d(m,n)\rp\ [\pi_{\d_m}(k)=\pi_{\d_n,\d_m}(\pi_{\d_n}(k))]$. For every $n<\o$ pick $j(n)$, $b_{\d_n}$ and $\psi_{\d_n}$ such that $\seq{\pi_{\d_n},\psi_{\d_n},b_{\d_n}}$ is a normal triple and $d_{j(n)}\le b_{\d_n}$ holds by using (3c). Let $K^d(n)$ be minimal such that $d_{j(n)}\le_{K^d(n)}b_{\d_n}$. Define a strictly increasing sequence $\seq{j_N:N<\o}$ by setting $j_N=\max(\set{j(N)}\cup\set{j_k+1:k\!<\!N}\cup\set{j_{m,N}:m\!\le\! N})$. Let $Q^d(N)$ be minimal such that $d_{j_N}\!\le_{Q^d(N)}d_{j(N)}$, $\forall k<N\ [d_{j_N}\le_{Q^d(N)}d_{j_k}]$ and $\forall m\le N\ [d_{j_N}\le_{Q^d(N)}d_{j_{m,N}}]$. Define $M^d_N\!=\!\max\big(\set{K^d(N),Q^d(N)}\cup\set{L^d(m,N):m\le N}\cup\set{M^d_k:k<N}\big)$.

For each $n<\o$ let $K^e(n)$ be minimal such that $e\le_{K^e(n)} b_{\d,\d_n}$. For $m\le n<\o$ let $L^e(m,n)$ be minimal such that $\forall k\in \sset(e)\lp L^e(m,n)\rp\ [\pi_{\d,\d_m}(k)=\pi_{\d_n,\d_m}(\pi_{\d,\d_n}(k))]$. For $N\!<\!\o$ let $M^e_N\!=\!\max(\set{K^e(N)}\cup\set{L^e(m,N):m\le N}\cup\set{M^e_k:k<N})$.

The proof of the following claim is simple so we leave it to the reader.

\begin{claim}\label{t41}
Let $N<\o$. The following hold:
\begin{enumerate}
\item\label{i01} $\forall m\le n\le N\ \forall k\in \sset(e)\lp M^e_N\rp\ [\pi_{\d,\d_m}(k)=\pi_{\d_n,\d_m}(\pi_{\d,\d_n}(k))]$;
\item\label{i02} $\forall m\le n\le N\ \forall k\in \sset(d_{j_N})\lp M^d_N\rp\ [\pi_{\d_m}(k)=\pi_{\d_n,\d_m}(\pi_{\d_n}(k))]$;
\item\label{i03} $\forall n\le N\ \forall k,l\in \sset(e)\lp M^e_N\rp\ [k\le l\sledi \pi_{\d,\d_n}(k)\le \pi_{\d,\d_n}(l)]$;
\item\label{i04} $\forall n\le N\ \forall k,l\in \sset(d_{j_N})\lp M^d_N\rp\ [k\le l\sledi \pi_{\d_n}(k)\le \pi_{\d_n}(l)]$.
\end{enumerate}
\end{claim}

For every $n<\o$ let $E_n=\pi''_{\d,\d_n}\sset(e)\lp M^e_n\rp\cap\pi''_{\d_n}\sset(d_{j_n})\lp M^d_n\rp\in \cu_{\d_n}$. Let $M$ be a countable elementary submodel of $H_{\br{2^{\c}}^+}$ containing $S$, $\d$, $f$, and sequences $\seq{E_{n}:n<\o}$ and $\seq{\d_n:n<\o}$. For $n<\o$ choose sets $D_n\in \cu_{\d_n}$ as follows: $D_n\subset^* A$ for every $A\in \cu_{\d_n}\cap M$. Note that these sets exist because $\cu_{\d_n}$ is a P-point and $M$ is countable. Define $g\in \o^{\o}$ by $g(n)=\max\set{f(n),t(n),s(n+1)}$, for each $n\in\o$. Note $g\in M$ and that $g$ is increasing. Now Lemma \ref{t7} applies to the sequences $\seq{\cu_{\d_n}:n<\o}$, $\bar\pi=\seq{\pi_{\d_n,\d_m}:m\le n<\o}$, $\seq{E_n:n<\o}$, $\bar D=\seq{D_n:n<\o}$ and the function $g$. Let $\seq{C_n:n<\o}$, $\seq{F_n:n<\o}$, $\bar K=\seq{K_{m,n}:m\le n<\o}$ and $\seq{g'(n):n<\o}$ be as in the conclusion of Lemma \ref{t7}. We denote $\rho=\seq{\bar D,\bar K,\bar \pi}$, numbers $m(n,j)$, numbers $R_n$ and numbers $z^n_j$ as in the conclusion of Lemma \ref{t7}.

At this point, for every $n<\o$, we define set $I_n=\set{m<\o:L^{\rho}_n\le m<L^{\rho}_{n+1}}$. Clearly, $\set{I_n:n<\o}$ is a partition of $\o$. We also have $I_n=\set{L^{\rho}_n+j:j<R_n}$. So every $k<\o$ is of the form $L^{\rho}_n+j$ for some $n<\o$ and $j<R_n$. For each $n<\o$ and $j<R_n$, (\ref{i60}) of Lemma \ref{t7} implies that $z^n_j=E_n(l^n_j)$ for some $l^n_j\ge g(L^{\rho}_n+j)$. Now for a fixed $n<\o$, $\seq{\pi_{\d,\d_n},\psi_{\d,\d_n},b_{\d,\d_n}}$ and $\seq{\pi_{\d_n},\psi_{\d_n},b_{\d_n}}$ are normal triples, $e\le_{M^e_n}b_{\d,\d_n}$, $d_{j_n}\le_{M^d_n}b_{\d_n}$, $E_n\subset \pi_{\d,\d_n}''\sset(e)\lp M^e_n\rp$, and $E_n\subset \pi_{\d_n}''\sset(d_{j_n})\lp M^d_n\rp$. So Lemma \ref{t14} applies and implies that for each $l<\o$, $\zeta_l=\max\!\set{\!m\!<\!\o\!:\!\pi_{\d,\d_n}''e(m)\!=\!\set{E_n(l)}}$ and $\z_l\!=\!\max\!\set{\!m\!<\!\o\!:\!\pi_{\d_n}''d_{j_n}(m)\!=\!\set{E_n(l)}}$ are well-defined, that $\zeta_l<\zeta_{l+1}$, $\z_l<\z_{l+1}$, and that $\zeta_l\ge M^e_n$, $\z_l\ge M^d_n$. Now for each $j<R_n$, define $\zeta^n_j=\zeta_{l^n_j}$ and $\z^n_j=\z_{l^n_j}$. It follows that $\zeta^n_j,\z^n_j\ge g(L^{\rho}_n+j)$. Also if $j<j+1<R_n$, then $z^n_j<z^n_{j+1}$, and so $\zeta^n_j<\zeta^n_{j+1}$ and $\z^n_j<\z^n_{j+1}$. Hence for $j<R_n$, $\abs{e(\zeta^n_j)}\ge g(L^{\rho}_n+j) \ge L^{\rho}_n+j+1$ and $\abs{d_{j_n}(\z^n_j)}\ge g(L^{\rho}_n+j)\ge t(L^{\rho}_n+j)$. Now unfix $n$. For $k<\o$ write $k=L^{\rho}_n+j$ and pick arbitrary $e^*(k)\in [e(\zeta^n_j)]^{k+1}$. Note that this choice of $e^*(k)$ ($k<\o$) ensures that $e^*(k)\subset e(m)$ for $m\ge f(k)$ as required in the statement of the lemma. Similarly for $l<\o$ find the unique $m$ such that $s(m)\le l<s(m+1)$ and write $m=L^{\rho}_n+j$. Pick $d^*(l)\!\in\![d_{j_n}(\z^n_j)]^{l+1}$ in such a way that for $s(m)\!\le\! l\!<\!l+1\!<\!s(m+1)$ we have $\max(d^*(l))\!<\!\min (d^*(l+1))$. This is possible by already proved $\abs{d_{j_n}(\z^n_j)}\!\ge\! t(m)$.

\begin{claim}\label{t8}
For every $k<\o$ we have
\[\TS
\max(e^*(k))<\min(e^*(k+1))\ \mbox{and}\ \max(d^*(k))<\min(d^*(k+1)).
\]
\end{claim}

\begin{proof}
For a fixed $n<\o$, if $j<j+1<R_n$, then $\zeta^n_j<\zeta^n_{j+1}$ and $\z^n_j<\z^n_{j+1}$ and so $\max(e(\zeta^n_j))<\min(e(\zeta^n_{j+1}))$ and $\max(d_{j_n}(\z^n_j))<\min(d_{j_n}(\z^n_{j+1}))$. Moreover, for a fixed $n<\o$ and $j<R_n$, if $s(L^{\rho}_n+j)\le l<l+1<s(L^{\rho}_n+j+1)$, then $\max(d^*(l))<\min(d^*(l+1))$ by definition. Therefore it suffices to show that for $n<\o$, $j<R_n$, and $j'<R_{n+1}$, $\max(e(\zeta^n_j))<\min(e(\zeta^{n+1}_{j'}))$ and $\max(d_{j_n}(\z^n_j))<\min(d_{j_{n+1}}(\z^{n+1}_{j'}))$.

To see the first inequality, we argue by contradiction. Suppose $y\in e(\zeta^n_j), y'\in e(\zeta^{n+1}_{j'})$, and $y'\le y$. As noted above, $\zeta^{n+1}_{j'}\ge M^e_{n+1}$. Thus $y,y'\in \sset(e)\lp M^e_{n+1}\rp$, and so by (\ref{i01}) and (\ref{i03}) of Claim \ref{t41},
\begin{align*}
&\pi_{\d_{n+1},\d_{m(n,j)}}(z^{n+1}_{j'}) = \pi_{\d_{n+1},\d_{m(n,j)}}(\pi_{\d,\d_{n+1}}(y'))=\pi_{\d,\d_{m(n,j)}}(y')\le \pi_{\d,\d_{m(n,j)}}(y) = \\
&\pi_{\d_n,\d_{m(n,j)}}(\pi_{\d,\d_n}(y))=\pi_{\d_n,\d_{m(n,j)}}(z^n_j)=x^n_j\le D_{m(n,j)}(K_{m(n,j),n+1}-1).
\end{align*}
However ${z}^{n + 1}_{j'} \in {C}_{n + 1} \setminus {F}_{n + 1}(g'(n + 1))$. But then by (\ref{i54}) of Lemma \ref{t7} applied to $n+1$ and $m=m(n,j)$, $\pi_{\d_{n+1},\d_{m(n,j)}}(z^{n+1}_{j'})>D_{m(n,j)}(K_{m(n,j),n+1}-1)$. This is a contradiction which proves the first inequality.

The second inequality is also proved by contradiction. So suppose $y\in d_{j_n}(\z^n_j)$, $y'\in d_{j_{n+1}}(\z^{n+1}_{j'})$, and $y'\le y$. As noted above, $\z^n_j\ge M^d_n$ and $\z^{n+1}_{j'}\ge M^d_{n+1}$. Moreover $d_{j_{n+1}}\le_{Q^d(n+1)}d_{j_n}$, $M^d_{n+1}\ge Q^d(n+1)$, and $M^d_{n+1}\ge M^d_n$. So there exists $l\ge M^d_n$ with $d_{j_{n+1}}(\z^{n+1}_{j'})\subset d_{j_n}(l)$. Thus $y'\in \sset(d_{j_{n+1}})\lp M^d_{n+1}\rp$ and $y,y'\!\in\! \sset(d_{j_n})\lp M^d_n\rp$. Therefore by (\ref{i02}) and (\ref{i04}) of Claim \ref{t41},
\begin{align*}
&\pi_{\d_{n+1},\d_{m(n,j)}}(z^{n+1}_{j'})\!=\pi_{\d_{n+1},\d_{m(n,j)}}(\pi_{\d_{n+1}}(y'))=\pi_{\d_{m(n,j)}}(y')\le \pi_{\d_{m(n,j)}}(y)=\\
&\pi_{\d_n,\d_{m(n,j)}}(\pi_{\d_n}(y))=\pi_{\d_n,\d_{m(n,j)}}(z^n_j)=x^n_j\le D_{m(n,j)}(K_{m(n,j),n+1}-1). 
\end{align*}
However $\pi_{\d_{n+1},\d_{m(n,j)}}(z^{n+1}_{j'})>D_{m(n,j)}(K_{m(n,j),n+1}-1)$ as pointed out in the previous paragraph. This is a contradiction which completes the proof.
\end{proof}

So for now we have settled that $e^*\le_0 e$ and that for every $n<\o$ there is $m\ge f(n)$ such that $e^*(n)\subset e(m)$. Define $\pi:\o\to\o$ as follows: for every $k\in \o\setminus \sset(d^*)$ let $\pi(k)=0$, while for every $k\in\sset(d^*)$ let $m$ be unique such that $k\in d^*(m)$ and define $\pi(k)=\sset(e^*)(m)$.

\begin{claim}\label{t92}
The sequences $\seq{e^*(n):n<\o}$ and $\seq{d^*(n):n<\o}$ belong to $\P$ and satisfy the following conditions:
\begin{enumerate}
\item\label{i91} $\forall j<\o\ [d^*\le d_j]$ and $\sset(e^*)=\pi''\sset(d^*)$;
\item\label{i92} $\forall \a\in X\ \forall^{\infty} k\in \sset(d^*)\ [\pi_{\a}(k)=\pi_{\d,\a}(\pi(k))]$;
\item\label{i93} $\forall \a\in X\ [\pi_{\d,\a}''\sset(e^*)\in \cu_{\a}]$;
\item\label{i94} There is $\psi$ such that $\seq{\pi,\psi,d^*}$ is a normal triple.
\end{enumerate}
\end{claim}

\begin{proof}
First note that $e^*$ and $d^*$ belong to $\P$. Next, we prove (\ref{i91}). It suffices to prove that $d^*\le d_{j_n}$, for all $n<\o$. Fix $n<\o$. Take any $l\ge s(L^{\rho}_n)$. Let $m$ be such that $s(m)\le l<s(m+1)$ and $n'$ and $j'<R_{n'}$ such that $m=L^{\rho}_{n'}+j'$. Note $m\ge L^{\rho}_n$ and $n'\ge n$. Then $d^*(l)\subset d_{j_{n'}}(\z^{n'}_{j'})$ and $\z^{n'}_{j'}\ge M^d_{n'}$. As noted earlier $\z^{n'}_{j'}\ge g(L^{\rho}_{n'}+j')=g(m)\ge s(m+1)\ge l$. So since $d_{j_{n'}}\le_{M^d_{n'}}d_{j_n}$, there is $l'\ge \z^{n'}_{j'}\ge l$ so that $d^*(l)\subset d_{j_{n'}}(\z^{n'}_{j'})\subset d_{j_n}(l')$, showing $d^*\le_{s(L^{\rho}_n)}d_{j_n}$. To see that $\sset(e^*)=\pi''\sset(d^*)$ note that by the definition of $\pi$ we have that for every $n<\o$ holds $\pi''d^*(n)=\set{\sset(e^*)(n)}$. So (\ref{i91}) is proved.

Now we prove (\ref{i92}). First we prove $\forall^{\infty}k\in \sset(d^*)\ [\pi_{\d_n}(k)=\pi_{\d,\d_n}(\pi(k))]$, for every $n<\o$. Fix $n<\o$ and consider any $n'\ge n$ and $j'<R_{n'}$. It suffices to show that for any $s(L^{\rho}_{n'}+j')\le l<s(L^{\rho}_{n'}+j'+1)$ and $k\in d^*(l)$, $\pi_{\d_n}(k)=\pi_{\d,\d_n}(\pi(k))$. By definition $d^*(l)\subset d_{j_{n'}}(\z^{n'}_{j'})$ and $\pi(k)\in e^*(L^{\rho}_{n'}+j')\subset e(\zeta^{n'}_{j'})$. Therefore $\pi_{\d_{n'}}(k)=z^{n'}_{j'}=\pi_{\d,\d_{n'}}(\pi(k))$. Also $k\in \sset(d_{j_{n'}})\lp M^d_{n'}\rp$ and $\pi(k)\in \sset(e)\lp M^e_{n'}\rp$ because $\zeta^{n'}_{j'}\ge M^e_{n'}$ and $\z^{n'}_{j'}\ge M^d_{n'}$. Thus by (\ref{i01}) and (\ref{i02}) of Claim \ref{t41}, $\pi_{\d_n}(k)=\pi_{\d_{n'},\d_n}(\pi_{\d_{n'}}(k))=\pi_{\d_{n'},\d_n}(\pi_{\d,\d_{n'}}(\pi(k)))=\pi_{\d,\d_n}(\pi(k))$, as needed. For the more general claim fix $\a\in X$ and find $n<\o$ so that $\a\le \d_n$. By (\ref{i78}) of Lemma \ref{t30}, there exist $i<\o$ and $L_0$ so that $\forall k\in \sset(d_i)\lp L_0\rp\ [\pi_{\a}(k)=\pi_{\d_n,\a}(\pi_{\d_n}(k))]$. Let $L_1$ be minimal with $d^*\le_{L_1}d_i$. By (\ref{i74}) of Lemma \ref{t30} and by the fact that $\sset(e^*)\subset \sset(e)$, there is $L_2$ so that $\forall k\in \sset(e^*)[L_2][\pi_{\d,\a}(k)=\pi_{\d_n,\a}(\pi_{\d,\d_n}(k))]$. Let $L_3$ be so that $\forall k\in \sset(d^*)\lp L_3\rp\ [\pi_{\d_n}(k)=\pi_{\d,\d_n}(\pi(k))]$. Let $L=\max\set{L_0,L_1,L_2,L_3}$. If $k\in \sset(d^*)\lp L\rp$, then $k\in \sset(d_i)\lp L_0\rp$ and $\pi(k)\in \sset(e^*)[L_2]$. So $\pi_{\a}(k)=\pi_{\d_n,\a}(\pi_{\d_n}(k))=\pi_{\d_n,\a}(\pi_{\d,\d_n}(\pi(k)))=\pi_{\d,\a}(\pi(k))$. Thus $\forall k\in \sset(d^*)\lp L\rp\ [\pi_{\a}(k)=\pi_{\d,\a}(\pi(k))]$, proving (\ref{i92}).

Now we come to (\ref{i93}). We first show that for each $m<\o$, $D_m\subset^*\pi_{\d,\d_m}''\sset(e^*)$. Fix $m<\o$. As $\seq{K_{m,n}:m\le n<\o}$ is strictly increasing with $n$, it suffices to show that for each $n\ge m$, $D_m[K_{m,n},K_{m,n+1})\subset \pi_{\d,\d_m}''\sset(e^*)$. Let $n\ge m$ and $u\in D_m[K_{m,n},K_{m,n+1})$ be given. Put $m'=$
\begin{align*}
\max\{m''\le n:m\le m''\ \mbox{and}\ \exists u''\in D_{m''}[K_{m'',n},K_{m'',n+1})\ [\pi_{\d_{m''},\d_m}(u'')=u]\},
\end{align*}
and choose $u'\in D_{m'}[K_{m',n},K_{m',n+1})$ with $\pi_{\d_{m'},\d_m}(u')=u$. We claim that $u'\in H^{\rho}_{m',n}$. Suppose not. Then there exist $m'<m''\le n$ and $u''\in D_{m''}[K_{m'',n},K_{m'',n+1})$ with $\pi_{\d_{m''},\d_{m'}}(u'')=u'$. Now $u''\in D_{m''}[K_{m'',m''}]$ because $K_{m'',m''}\le K_{m'',n}$. So by (\ref{i53}) of Lemma \ref{t7} applied to $m''$, $u''\in F_{m''}\subset C_{m''}$. By one of the properties of $C_{m''}$ listed in Lemma \ref{t7}, $\pi_{\d_{m''},\d_m}(u'')=\pi_{\d_{m'},\d_m}(\pi_{\d_{m''},\d_{m'}}(u''))=\pi_{\d_{m'},\d_m}(u')=u$. However this contradicts the choice of $m'$. Thus $u'\in H^{\rho}_{m',n}\subset \Delta^{\rho}_n$. So let $j<R_n$ be so that $u'=x^n_j$. Note that $m(n,j)=m'$. Also $z^n_j\in F_n\subset C_n$, and so $\pi_{\d_n,\d_m}(z^n_j)=\pi_{\d_{m'},\d_m}(\pi_{\d_n,\d_{m'}}(z^n_j))=\pi_{\d_{m'},\d_m}(u')=u$. Now if $k\in e^*({L}^{\rho}_{n} + j)\subset e(\zeta^n_j)$, then $k\in \sset(e)\lp M^e_n\rp$ because $\zeta^n_j\ge M^e_n$. So by (\ref{i01}) of Claim \ref{t41}, $\pi_{\d,\d_m}(k)=\pi_{\d_n,\d_m}(\pi_{\d,\d_n}(k))=\pi_{\d_n,\d_m}(z^n_j)=u$, showing that $u\in \pi_{\d,\d_m}''\sset(e^*)$. This concludes the proof that $D_m\subset^* \pi_{\d,\d_m}''\sset(e^*)$. As $D_m\in \cu_{\d_m}$, this shows that $\pi_{\d,\d_m}''\sset(e^*)\in \cu_{\d_m}$, for all $m \in \omega$. Now for the more general statement, fix $\a\in X$. Find $m\in \o$ with $\d_m\ge \a$. By (\ref{i74}) of Lemma \ref{t30}, there is $L$ so that $\forall k\in \sset(e^*)[L]\ [\pi_{\d,\a}(k)=\pi_{\d_m,\a}(\pi_{\d,\d_m}(k))]$. Put $A=\sset(e^*)[L]$. Since $\pi_{\d,\d_m}''A\in \cu_{\d_m}$, $\pi_{\d_m,\a}''\pi_{\d,\d_m}''A\in \cu_{\a}$. Hence $\pi_{\d_m,\a}''\pi_{\d,\d_m}''A\subset \pi_{\d,\a}''A\subset \pi_{\d,\a}''\sset(e^*)$, implying $\pi_{\d,\a}''\sset(e^*)\in \cu_{\a}$, which proves (\ref{i93}).

For the proof of (\ref{i94}), consider function $\psi:\o\to\o$ defined in the following way: for $k<\o$ let $\psi(k)=\sset(e^*)(k)$. It is clear that $\seq{\pi,\psi,d^*}$ is a normal triple.
\end{proof}

The last claim proves the lemma.
\end{proof}
When $\CH$ is replaced by $\MA$, the statement of Lemma \ref{t30} needs to be generalized as follows.
$\delta$ is allowed to be any ordinal with $\cf(\delta) < \c$, and the decreasing sequence $\langle {d}_{j}: j < \omega \rangle$ is replaced by the decreasing sequence $\langle {d}_{j}: j < \cf(\delta) \rangle$.
This version can be proved under $\MA$ by taking a suitably generic filter over a poset consisting of finite approximations to ${d}^{\ast}$, ${e}^{\ast}$, and $\pi$ together with some finite side conditions.
The exact definition of this poset can be formulated by examining the proofs of Lemmas \ref{t7} and \ref{t30}.

Lemma \ref{t101} will be used in the proof that the poset ${\Q}^{\delta}$ is countably closed.
The requirement in Lemma \ref{t7} that $F_n = C_n \cap \pi_{n+1,n}''C_{n+1}$ will be used crucially in this proof.
\begin{lemma}\label{t101}
Let $\seq{\mathcal U_n:n<\o}$ be a sequence of distinct rapid P-points.
Assume that $\seq{\pi_{n,m}:m\le n<\o} \subset \BS$ is a sequence so that $\pi_{n,n}=\id$ ($n<\o$) and:
\begin{enumerate}[series=elemma]
\item\label{i101} $\forall m\le n<\o\ \forall a\in\mathcal U_n\ [\pi''_{n,m}a\in\mathcal U_m]$;
\item\label{i102} $\forall m\le n\le k<\o\ \exists a\in\mathcal U_k\ \forall l\in a\ [\pi_{k,m}(l)=\pi_{n,m}(\pi_{k,n}(l))]$;
\item\label{i103} $\forall m\le n<\o\ \exists a\in\mathcal U_n\ \forall x,y\in a\ [x\le y\sledi \pi_{n,m}(x)\le \pi_{n,m}(y)]$.
\end{enumerate}
Then for every $e\in \P$ there is a sequence of maps in $\o^{\o}$, $\seq{\pi_n:n<\o}$, satisfying:
\begin{enumerate}[resume=elemma]
\item\label{i104} $\forall n<\o\ [\pi''_n \sset(e)\in \mathcal U_n]$;
\item\label{i105} $\forall m\le n<\o\ \forall^{\infty} k\in \sset(e)\ [\pi_m(k)=\pi_{n,m}(\pi_n(k))]$;
\item\label{i106} for every $n<\o$ there are $\psi_n\in \o^{\o}$ and $b_n\in \P$ such that $e\le b_n$ and that $\seq{\pi_n,\psi_n,b_n}$ is a normal triple.
\end{enumerate}
\end{lemma}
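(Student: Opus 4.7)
The proof plan follows the general pattern of the proof of Lemma~\ref{t30}, adapted to the simpler situation here where no previous condition needs to be shrunk and no map at a limit need be extended. The first step is to fix a countable elementary submodel $M \prec H_{(2^{\c})^+}$ containing $e$, $\seq{\cu_n : n < \o}$, and $\seq{\pi_{n,m} : m \le n < \o}$, and choose $D_n \in \cu_n$ with $D_n \subset^* A$ for every $A \in M \cap \cu_n$, using the P-point property of $\cu_n$; by Claim~\ref{t102}, the sets $D_n$ are pairwise almost disjoint. After picking $E_n \in \cu_n \cap M$ and an increasing $f \in M \cap \o^{\o}$ growing sufficiently fast, apply Lemma~\ref{t7} to these data to obtain sequences $\seq{C_n}, \seq{F_n}, \bar K, g'$ and the derived quantities $\rho = \seq{\bar D, \bar K, \bar\pi}$, $L_n^{\rho}, R_n, x_j^n, z_j^n, m(n,j)$.

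For each $n < \o$, define $\pi_n \in \o^{\o}$ by setting $\pi_n(k) = 0$ whenever $k \notin \sset(e)$ or $k \in e(l)$ with $l < L_n^{\rho}$, and, when $k \in e(l)$ with $l \geq L_n^{\rho}$, writing $l = L_{n'}^{\rho} + j$ for the unique $n' \geq n$ and $j < R_{n'}$ and setting $\pi_n(k) = \pi_{n',n}(z_j^{n'})$. Since $z_j^{n'} \in F_{n'} \subset C_{n'}$, the commutativity property of $C_{n'}$ furnished by Lemma~\ref{t7} rewrites this value as $\pi_{m(n',j),n}(x_j^{n'})$ when $n \leq m(n',j)$, placing it in $\pi_{m(n',j),n}''D_{m(n',j)} \in \cu_n$. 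Take $b_n \in \P$ to be a coarsening of $e$ whose first block $b_n(0)$ absorbs $e(0) \cup \cdots \cup e(L_n^{\rho}-1)$, together with a finite tail of additional $e$-blocks as needed to restore the growth condition of $\P$ and to handle the monotonicity issue discussed below; subsequent blocks of $b_n$ are single $e$-blocks.

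The verification of~(\ref{i104}) proceeds by showing $D_n \subset^* \pi_n''\sset(e)$: given $u \in D_n$ past $D_n(K_{n,n})$, take $n' \geq n$ with $u \in D_n[K_{n,n'}, K_{n,n'+1})$, and choose the maximal $m' \in [n,n']$ for which $u$ has a preimage $v \in D_{m'}[K_{m',n'}, K_{m',n'+1})$ under $\pi_{m',n}$; the maximality forces $v \in H_{m',n'}^{\rho}$, so $v = x_j^{n'}$ for some $j < R_{n'}$ with $m(n',j) = m'$, and hence $\pi_{n',n}(z_j^{n'}) = \pi_{m',n}(x_j^{n'}) = u$. Condition~(\ref{i105}) is immediate from the commutativity of the maps $\pi_{n',\cdot}$ on $C_{n'(l)}$ applied to $z_{j'(l)}^{n'(l)}$, valid for all $l \geq L_n^{\rho}$, i.e.\ for cofinitely many blocks. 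The principal obstacle is~(\ref{i106}), and specifically the weak monotonicity of $\psi_n$: within a single level $l \in [L_{n'}^{\rho}, L_{n'+1}^{\rho})$ weak monotonicity follows from monotonicity of $\pi_{n',n}$ on $C_{n'}$ together with strict $j$-monotonicity of the sequence $(z_j^{n'})_j$, but at each boundary $l = L_{n'+1}^{\rho}$ a potential downward jump of $\psi_n$ can occur. This is where the identity $F_n = C_n \cap \pi_{n+1,n}''C_{n+1}$ supplied by Lemma~\ref{t7} is used crucially, as advertised in the paper: it supplies for every $z \in F_{n'}$ a preimage in $C_{n'+1}$ and, together with condition~(\ref{i54}) of Lemma~\ref{t7}, permits direct comparison of $\psi_n(L_{n'+1}^{\rho} - 1)$ and $\psi_n(L_{n'+1}^{\rho})$ via the monotonicity of $\pi_{n'+1,n}$ on $C_{n'+1}$. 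Absorbing finitely many offending boundary blocks into the appropriate block of $b_n$ and redefining $\pi_n$ to $0$ on these blocks (thereby disturbing~(\ref{i105}) on only finitely many $k$) then yields the required weak monotonicity of $\psi_n$, whose range is infinite by~(\ref{i104}).
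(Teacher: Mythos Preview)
Your proposal is correct and follows essentially the same strategy as the paper. The one unnecessary complication is your coarsening of $e$ to produce $b_n$ and your hedge about ``absorbing finitely many offending boundary blocks'': the paper simply takes $b_n = e$, and the boundary comparison you correctly identify --- lifting $z^{n'}_j \in F_{n'}$ to some $z \in C_{n'+1}$ via $F_{n'} = C_{n'} \cap \pi_{n'+1,n'}''C_{n'+1}$, observing $\pi_{n'+1,m(n',j)}(z) = x^{n'}_j \le D_{m(n',j)}(K_{m(n',j),n'+1}-1)$, and then invoking (\ref{i54}) of Lemma~\ref{t7} to get $z < F_{n'+1}(g'(n'+1)) \le z^{n'+1}_{j'}$ --- yields $\psi_n(L^{\rho}_{n'} + j) = \pi_{n'+1,n}(z) \le \pi_{n'+1,n}(z^{n'+1}_{j'}) = \psi_n(L^{\rho}_{n'+1} + j')$ at \emph{every} boundary with $n' \ge n$, so there are no offending blocks to absorb. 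The paper also makes the simplest choices $E_k = \omega$ and $f = \id$; no faster growth is needed here since there is no analogue of condition~(\ref{i84}) of Lemma~\ref{t30} to arrange.
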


\begin{proof}
Define $E_k=\o$, for every $k<\o$. Let $M$ be a countable elementary submodel of $H_{(2^{\c})^+}$ containing $\bar\pi=\seq{\pi_{n,m}:m\le n<\o}$, $\seq{\cu_n:n<\o}$. For $m<\o$, let $D_m\in \cu_m$ be such that $D_m\subset^* A$ for every $A\in \cu_m\cap M$. Now Lemma \ref{t7} applies to $M$, function $f=\id$, sequences $\bar \pi$, $\bar D=\seq{D_m:m<\o}$, $\seq{\cu_n:n<\o}$ and $\seq{E_n:n<\o}$. Let sequences $\seq{F_n:n<\o}$, $\seq{C_n:n<\o}$, $\seq{g'(n):n<\o}$ and $\bar K=\seq{K_{m,n}:m\le n<\o}$ be as in Lemma \ref{t7}. Denote $\rho=\seq{\bar D,\bar K,\bar \pi}$, $R_n$, $z^n_j$, $x^n_j$ and $m(n,j)$ ($j<R_n$) as in the conclusion of Lemma \ref{t7}. 

For each $n < \omega$, define ${I}_{n} = \{{L}^{\rho}_{n} + j: j < {R}_{n}\}$.
Recall from the proof of Lemma \ref{t30} that $\langle {I}_{n}: n \in \omega \rangle$ is an interval partition of $\omega$.
Fix $m < \omega$.
For $n < m$ and $j < {R}_{n}$, define ${\psi}_{m}({L}^{\rho}_{n} + j) = 0$, while for $m \leq n$ and $j < {R}_{n}$, define ${\psi}_{m}({L}^{\rho}_{n} + j ) = {\pi}_{n, m}({z}^{n}_{j})$.
Thus ${\psi}_{m} \in {\omega}^{\omega}$ and we claim that it is increasing.
It suffices to consider the following two cases.
Case $1$ is when $n < \omega$, $j \leq j' < {R}_{n}$ and we wish to compare ${\psi}_{m}({L}^{\rho}_{n} + j)$ and ${\psi}_{m}({L}^{\rho}_{n} + j')$.
If $n < m$, then both these values are $0$.
If $m \leq n$, then ${\psi}_{m}({L}^{\rho}_{n} + j) = {\pi}_{n, m}({z}^{n}_{j}) \leq {\pi}_{n, m}({z}^{n}_{j'}) = {\psi}_{m}({L}^{\rho}_{n} + j')$ because ${z}^{n}_{j} \leq {z}^{n}_{j'}$ and because ${z}^{n}_{j}, {z}^{n}_{j'} \in {C}_{n}$.
Now we come to case $2$, which is when we wish to compare ${\psi}_{m}({L}^{\rho}_{n} + j)$ and ${\psi}_{m}({L}^{\rho}_{n + 1} + j')$, for some $n < \omega$, $j < {R}_{n}$, and $j' < {R}_{n + 1}$.
First, if $n < m$, then ${\psi}_{m}({L}^{\rho}_{n} + j) = 0 \leq {\psi}_{m}({L}^{\rho}_{n + 1} + j')$.
So assume that $m \leq n$.
Since ${z}^{n}_{j} \in {F}_{n}$, there exists $z \in {C}_{n + 1}$ with ${\pi}_{n + 1, n}(z) = {z}^{n}_{j}$.
By a property of ${C}_{n + 1}$ from Lemma \ref{t7}, ${\pi}_{n + 1, m(n, j)}(z) = {\pi}_{n, m(n, j)}({\pi}_{n + 1, n}(z)) = {\pi}_{n, m(n, j)}({z}^{n}_{j}) = {x}^{n}_{j} \leq {D}_{m(n, j)}\left( {K}_{m(n, j), n + 1} - 1 \right)$.
It follows from (\ref{i54}) of Lemma \ref{t7} applied to $n + 1$ that $z < {F}_{n + 1}(g'(n + 1)) \leq {z}^{n + 1}_{j'}$.
Since ${z}^{n + 1}_{j'} \in {C}_{n + 1}$, ${\pi}_{n, m}({z}^{n}_{j}) = {\pi}_{n, m}({\pi}_{n + 1, n}(z)) = {\pi}_{n + 1, m}(z) \leq {\pi}_{n + 1, m}({z}^{n + 1}_{j'})$.
So ${\psi}_{m}({L}^{\rho}_{n} + j) = {\pi}_{n, m}({z}^{n}_{j}) \leq {\pi}_{n + 1, m}({z}^{n + 1}_{j'}) = {\psi}_{m}({L}^{\rho}_{n + 1} + j')$.
Thus we have proved that ${\psi}_{m}$ is increasing.

Now for each $m < \omega$, define ${\pi}_{m} \in {\omega}^{\omega}$ as follows.
Let $k \in \omega$.
If $k \notin \sset(e)$, then set ${\pi}_{m}(k) = 0$; else let $l \in \omega$ be unique such that $k \in e(l)$, and set ${\pi}_{m}(k) = {\psi}_{m}(l)$.
We check that (\ref{i104})--(\ref{i106}) are satisfied.
We begin with (\ref{i105}).
Fix $m \leq l < \omega$.
Consider any $k \in \sset(e)\lp {L}^{\rho}_{l} \rp$.
Then $k \in e({L}^{\rho}_{n} + j)$, for some $l \leq n < \omega$ and $j < {R}_{n}$.
So ${\pi}_{m}(k) = {\psi}_{m}({L}^{\rho}_{n} + j) = {\pi}_{n, m}({z}^{n}_{j})$ and ${\pi}_{l}(k) = {\psi}_{l}({L}^{\rho}_{n} + j) = {\pi}_{n, l}({z}^{n}_{j})$.
Since ${z}^{n}_{j} \in {C}_{n}$, ${\pi}_{n, m}({z}^{n}_{j}) = {\pi}_{l, m}({\pi}_{n, l}({z}^{n}_{j}))$.
Therefore, ${\pi}_{m}(k) = {\pi}_{n, m}({z}^{n}_{j}) = {\pi}_{l, m}({\pi}_{n, l}({z}^{n}_{j})) = {\pi}_{l, m}({\pi}_{l}(k))$, as needed for (\ref{i105}).

Next we prove (\ref{i104}). Fix $m<\o$. We will show $D_m\subset^* \pi_m''\sset(e)$. As the sequence $\seq{K_{m,n}:m\le n<\o}$ is strictly increasing with $n$, it suffices to show that for each $n\ge m$, $D_m[K_{m,n},K_{m,n+1})\subset \pi_m''\sset(e)$. Let $n\!\ge\! m$ and $u\!\in\! D_m[K_{m,n},K_{m,n+1})$ be given. Apply the same argument as in the proof of (\ref{i93}) of Claim \ref{t92} to find $m'$ and $u'$ so that $m\le m'\le n$, $u'\in D_{m'}[K_{m',n},K_{m',n+1})$, $\pi_{m',m}(u')=u$, and $u'\in H^{\rho}_{m',n}\subset \Delta^{\rho}_n$. Let $j<R_n$ be such that $x^n_j=u'$. Note that $m(n,j)=m'$. Also $z^n_j\in F_n\subset C_n$. So by a property of $C_n$ from Lemma \ref{t7}, $\pi_{n,m}(z^n_j)=\pi_{m',m}(\pi_{n,m'}(z^n_j))=\pi_{m',m}(u')=u$. Now if $k\in e(L^{\rho}_n+j)$, then since $m\le n$, by definition, $\pi_m(k)=\pi_{n,m}(z^n_j)=u$. Thus $u\in \pi_m''\sset(e)$. This proves $D_m\subset^*\pi_m''\sset(e)$, which proves (\ref{i104}) because $D_m\in \cu_m$.

We still have to prove (\ref{i106}).
Fix $m < \omega$.
We have already defined ${\psi}_{m}$ and proved that it is increasing.
Let ${b}_{m} = e$.
By definition of ${\pi}_{m}$, ${\pi}_{m}''{b}_{m}(l) = \{{\psi}_{m}(l)\}$, for each $l < \omega$, and ${\pi}_{m}(k) = 0$, for all $k \in \omega \setminus \sset({b}_{m})$.
Also $\ran({\psi}_{m})$ is infinite because ${\pi}_{m}''\sset(e) \in {\cu}_{m}$ and ${\pi}_{m}''\sset(e) \subset \ran({\psi}_{m})$.
Therefore $\langle {\pi}_{m}, {\psi}_{m}, {b}_{m} \rangle$ is a normal triple and $e \leq {b}_{m}$, as needed.
\end{proof}
In the context of $\MA$, the statement of Lemma \ref{t101} will be modified as follows.
The sequence $\langle {\cu}_{n}: n < \omega \rangle$ will be replaced with the sequence $\langle {\cu}_{\alpha}: \alpha < \lambda \rangle$, where $\lambda$ is a cardinal $ < \c$.
Moreover each ${\cu}_{\alpha}$ will be assumed to be a rapid ${P}_{\c}$-point.
And, of course, there will be a map ${\pi}_{\beta, \alpha}$, for each $\alpha \leq \beta < \lambda$.
The sequence $\langle {\pi}_{n}: n < \omega \rangle$ in the conclusion of Lemma \ref{t101} will be replaced by the sequence $\langle {\pi}_{\alpha}: \alpha < \lambda \rangle$.
This version can be proved under $\MA$ by taking a suitably generic filter over a poset consisting of finite approximations to the sequence $\langle {\pi}_{\alpha}: \alpha < \lambda \rangle$ together with some finite side conditions.
Its exact definition can be gotten by looking at the proofs of Lemmas \ref{t7} and \ref{t101}.

The next lemma will also be used in the proof that the poset ${\Q}^{\delta}$ is countably closed.
It is like a simple special case of Lemma \ref{t30} in spirit, 
but does not directly follow from the statement of Lemma \ref{t30}.
\begin{lemma}\label{t36}
Let $\cu$ be a rapid P-point, $\pi$ a mapping in $\o^{\o}$ and $\seq{d_m:m<\o}$ a decreasing sequence of conditions in $\P$ such that $\pi''\sset(d_n)\in\cu$ for every $n<\o$. Suppose that there are $b\in \P$ and $\psi\in\o^{\o}$ so that $\seq{\pi,\psi,b}$ is a normal triple and $d_0\le b$. Then there is $d\in \P$ such that $\pi''\sset(d)\in \cu$ and $d\le d_n$ for every $n<\o$.
\end{lemma}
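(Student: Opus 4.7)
First I would use the P-point property of $\cu$ to produce $A\in\cu$ with $A\subset^{*}\pi''\sset(d_n)$ for every $n<\o$. Because $\seq{\pi,\psi,b}$ is a normal triple and $d_{0}\le b$, Remark~\ref{r5}(\ref{i204}) gives $d_{n}\le b$ for every $n$, and hence $\pi''\sset(d_n)\subset^{*}\ran(\psi)$; I therefore assume $A\subset\ran(\psi)$. Then I would fix a non-decreasing sequence of thresholds $T_n\ge n$ such that $d_n\le_{T_n}d_m$ for all $m\le n$ and $d_n\le_{T_n}b$, which exists by Remark~\ref{r5}(\ref{i203}) combined with (\ref{i204}). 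The point of these thresholds is that for every $k\ge T_n$ the block $d_n(k)$ lies inside a single $b(\ell)$, so $\pi$ is constant on $d_n(k)$ with value $\psi(\ell)\in\ran(\psi)$.

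Next, for each $j<\o$ the set $F_j:=A\setminus\pi''\sset(d_j)\lp T_j\rp$ is finite, because both $A\setminus\pi''\sset(d_j)$ and $\pi''\sset(d_j)\setminus\pi''\sset(d_j)\lp T_j\rp$ are. So I would put $g(j)=\max F_j+1$, invoke the rapidity of $\cu$ via Lemma~\ref{t33}, and obtain $A'\in\cu$ with $A'\subset A$ and $A'(j)\ge g(j)$ for every $j$. Enumerating $A'=\set{a(j):j<\o}$ in increasing order, the inequality $a(j)>\max F_j$ means $a(j)\in\pi''\sset(d_j)\lp T_j\rp$, so I can pick $k(j)\ge T_j$ with $\pi''d_j(k(j))=\set{a(j)}$ and take $d(j)$ to be any subset of $d_j(k(j))$ of size exactly $j+1$; such a choice is possible since $\abs{d_j(k(j))}\ge k(j)+1\ge T_j+1\ge j+1$.

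Finally I would verify that $d=\seq{d(j):j<\o}$ belongs to $\P$, satisfies $d\le d_m$ for every $m$, and has $\pi''\sset(d)\in\cu$. The identity $\pi''\sset(d)=\set{a(j):j<\o}=A'\in\cu$ is immediate from the construction, and for $j\ge m$ we have $k(j)\ge T_j\ge T_m$, so $d_j(k(j))\subset d_m(k')$ for some $k'\ge k(j)\ge j$, witnessing $d\le d_m$. The main subtlety is the block-ordering $\max d(j-1)<\min d(j)$. Applying $d_j\le_{T_j}d_{j-1}$ I find $k'\ge k(j)$ with $d_j(k(j))\subset d_{j-1}(k')$; since $k'\ge T_{j-1}$, the block $d_{j-1}(k')$ sits inside a single $b(\ell')$, which forces $\pi''d_{j-1}(k')=\pi''d_j(k(j))=\set{a(j)}$. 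Because the map $k\mapsto\pi''d_{j-1}(k)$ on $\set{k\ge T_{j-1}}$ is non-decreasing (the unique index $\ell$ with $d_{j-1}(k)\subset b(\ell)$ grows with $k$, and $\psi$ is non-decreasing), the strict inequality $a(j)>a(j-1)$ yields $k'>k(j-1)$, so $\min d(j)\ge \min d_{j-1}(k')>\max d_{j-1}(k(j-1))\ge \max d(j-1)$; the size monotonicity is automatic. The hardest part is really the bookkeeping around the thresholds and the ordering, but once rapidity is used to push $a(j)$ past the finite exception set $F_j$ the rest falls out mechanically from the normal-triple structure.
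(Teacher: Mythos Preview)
Your proof is correct and follows essentially the same approach as the paper: diagonalize through the sets $\pi''\sset(d_n)$ using the P-point and rapidity properties, then use the normal-triple structure of $\seq{\pi,\psi,b}$ together with the thresholds $T_n$ to arrange the block ordering. Your version is slightly more streamlined --- you apply the P-point property first and then invoke rapidity once against the finite exception sets $F_j$, whereas the paper applies rapidity at each level to get sets $D_k$ and then pseudo-intersects them, building the cutoffs $g(k)$ by an explicit induction --- but the core mechanism is the same.
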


\begin{proof}
First we define sequence of numbers $n_k$ ($k<\o$) as follows: $n_0$ is minimal such that $d_0\le_{n_0} b$, while $n_{k+1}=\max\set{l,n_k}$ for $l$ minimal such that $d_{k+1}\le_l d_k$. By Remark \ref{r5}(\ref{i204}) we have $d_{k}\le_{n_k} b$, $d_k\le_{n_k}d_l$ for $l\le k$ and consequently $\sset(d_{k+1})\lp n_{k+1}\rp\subset \sset(d_k)\lp n_k\rp$ for $k<\o$. Let $C_k=\pi''\sset(d_k)\lp n_k\rp$ for $k<\o$ and notice that $C_{k+1}\subset C_k$ and $C_k\in \cu$ for $k<\o$. So since $\cu$ is a rapid ultrafilter, by Lemma \ref{t33}, for every $k<\o$ there is $D_k\in\cu$ such that for every $n<\o$ there is $m\ge 2(n+1)$ such that $D_k(n)=C_k(m)$. Because $\cu$ is a P-point there is $D\in \cu$ such that $D\subset^* D_k$ for every $k<\o$ and $D\subset D_0$. For every $k,l<\o$ define set $F^k_l=\set{m<\o:\pi''d_k(m)=\set{D(l)}}$. By Lemma \ref{t14}, if $D(l)\in C_k$, then $\dc^k_l=\max(F^k_l)$ and $\bc^k_l=\min(F^k_l\setminus n_k)$ are well defined. For a fixed $k$ and $l_1\!<\!l_2$ such that $D(l_1),D(l_2)\in C_k$, again by Lemma \ref{t14}, we have $\dc^k_{l_1}<\bc^k_{l_2}\le\dc^k_{l_2}$. Also, if $l_1\!<\!l_2\!<\!\o$, $k_1\!<\!k_2\!<\!\o$, $D(l_1)\!\in\! D_{k_1}$ and $D(l_2)\!\in\! D_{k_2}$, then it is easy to see that $\max(d_{k_1}(\dc^{k_1}_{l_1}))<\min(d_{k_2}(\dc^{k_2}_{l_2}))$.
%%To see that the last observation is true, first note that $D(l_1),D(l_2)\in C_{k_1}$, so $\dc^{k_1}_{l_1}<\bc^{k_1}_{l_2}\le \dc^{k_1}_{l_2}$. Next, since $\dc^{k_2}_{l_2}\ge n_{k_2}$ and $d_{k_2}\le_{n_{k_2}}d_{k_1}$ there is $\dc\ge \dc^{k_2}_{l_2}$ such that $d(k_2)(\dc^{k_2}_{l_2})\subset d_{k_1}(\dc)$. Also $\pi''d_{k_1}(\dc)=\set{D(l_2)}$ because $d_{k_1}\le_{n_{k_1}} b$ and $\dc\ge \dc^{k_2}_{l_2}\ge n_{k_2}\ge n_{k_1}$. Therefore $\dc\in F^{k_1}_{l_2}\setminus n_{k_1}$, so $\dc\ge \bc^{k_1}_{l_2}>\dc^{k_1}_{l_1}$.
Now, by induction on $k$, we construct numbers $g(k)$ and sets $d(m)$ for $m<g(k)$ so that for $k<\o$:
\begin{enumerate}
\item\label{i602} $\dc^k_{g(k)}\ge g(k)$;
\item\label{i606} $D[g(k)]\subset D_{k}$;
\item\label{i603} if $k>0$ then $\forall l\in [g(k-1),g(k))\ [d(l)\in [d_{k-1}(\dc^{k-1}_l)]^{l+1}]$.
\item\label{i604} if $k\!>\!0$ then $g(k)\!>\!g(k-1)$ and $\forall l\!<\!g(k)-1\ [\max(d(l))\!<\!\min(d(l+1))]$.
\end{enumerate}
Let $g(0)=0$ and note that (\ref{i602}-\ref{i604}) are satisfied. So fix $k\in\o$ and assume that for every $m\le k$ numbers $g(m)$ are defined, and that for every $l<g(k)$ sets $d(l)$ are defined. Let $X_k$ be the minimal number such that $X_k>g(k)$ and $D[X_k]\subset D_{k+1}$ and define $g(k+1)=2X_k$. First note that since $X_k>g(k)$ we have that $g(k+1)=2X_k>g(k)$. Since by inductive hypothesis $D[g(k)]\subset D_{k}$ and $\dc^k_{g(k)}\ge g(k)$, Lemma \ref{t14} implies that for $g(k)\le l<l'<g(k+1)$ we have $\dc^k_l<\dc^k_{l'}$. So we can pick $d(l)\in [d_{k}(\dc^k_l)]^{l+1}$ for $g(k)\le l<g(k+1)$. Now we prove that (\ref{i602}-\ref{i604}) hold. To prove (\ref{i602}) note that by the choice of $X_k$ we know that $D[X_k,g(k+1))\subset D_{k+1}$. So $D(g(k+1))=D_{k+1}(m)$ for some $m\ge g(k+1)-X_k=X_k$. This implies that $D(g(k+1))=C_{k+1}(m')$ for some $m'\ge 2m\ge 2X_k\ge g(k+1)$. Now by Lemma \ref{t14} applied to $C_{k+1}$, $d_{k+1}$ and $\seq{\pi,\psi,b}$ we have $\dc^{k+1}_{g(k+1)}\ge g(k+1)$. Condition (\ref{i606}) follows from the fact that $g(k+1)=2X_k\ge X_k$ and $D[X_k]\subset D_{k+1}$. Condition (\ref{i603}) holds by construction. To see that (\ref{i604}) is true we distinguish three cases: either $l>g(k)-1$ or $l<g(k)-1$ or $l=g(k)-1$. If $l<g(k)-1$ then it follows from the inductive hypothesis and the fact that $g(0)=0$. If $l=g(k)-1$ then because $g(k-1)<g(k)$ we have $g(k)-1\ge g(k-1)$ so $k>0$. By (\ref{i606}) applied to $k-1$ and $k$ we know $D(g(k)-1)\in D_{k-1}$ and $D(g(k))\in D_k$ so the statement follows from the observation in the first paragraph that $\max(d_{k-1}(\dc^{k-1}_{g(k)-1}))<\min(d_k(\dc^k_{g(k)}))$. If $l>g(k)-1$ then it follows from the facts that $\dc^k_l<\dc^k_{l+1}$.
\end{proof}

\section{Adding an ultrafilter on top} \label{sec:top}
In this section, for a given $\d<\o_2$, we introduce the poset for adding a rapid P-point $\cu_{\d}$ together with a sequence of maps $\langle {\pi}_{\delta, \alpha}: \alpha \leq \delta \rangle$ on top of an already constructed $\d$-generic sequence of P-points $\seq{\cu_{\a}:\a<\d}$ and Rudin-Keisler maps $\seq{\pi_{\b,\a}:\a\le \b<\d}$.
So fix a $\d<\o_2$ and a $\d$-generic sequence $S=\seq{\seq{c^{\a}_i:\a<\d \wedge i<\d},\seq{\pi_{\b,\a}:\a\le \b<\d}}$ for the rest of this section.

We briefly explain the idea behind the definition of ${\Q}^{\delta}$ given below.
We would like a generic filter for ${\Q}^{\delta}$ to produce two sequences $\bar{C} = \langle {c}^{\delta}_{i}: i < \c \rangle$ and $\bar{\pi} = \langle {\pi}_{\delta, \alpha}: \alpha \leq \delta \rangle$ which, when added to $S$, will result in a $\delta + 1$-generic sequence.
Conditions in ${\Q}^{\delta}$ are essentially countable approximations to such objects.
The first coordinate of the condition $q$ will be an element of $\bar{C}$, and the fourth coordinate fixes $\bar{\pi}$ on a countable subset of $\delta$.
Clauses (\ref{i16}), (\ref{i17}), and (\ref{i15}) below say that the maps that have already been determined by $q$ work in accordance with clauses (\ref{i5}), (\ref{i0}), and (\ref{i6}) of Definition \ref{d22}.
Clause \ref{i13} below says that ${X}_{q}$, which is the countable set on which $\bar{\pi}$ has been fixed, always has a maximal element unless ${X}_{q}$ is cofinal in $\delta$.
This assumption will simplify some arguments.
\begin{definition}\label{d14}
Let $\Q^{\d}$ be the set of all $q=\seq{c_q,\g_q,X_q,\seq{\pi_{q,\a}:\a\in X_q}}$ such that:
\begin{enumerate}
\item\label{i11} $c_q\in\P$;
\item\label{i12} $\g_q\le \d$;
\item\label{i13} $X_q\in[\d]^{\le\o}$ is such that $\g_q=\sup(X_q)$ and $\g_q\in X_q$ iff $\g_q<\d$;
\item\label{i14} $\pi_{q,\a}$ ($\a\in X_q$) are mappings in $\o^{\o}$ such that:
\begin{enumerate}
\item\label{i16} $\pi''_{q,\a}\sset(c_q)\in \mathcal U_{\a}$;
\item\label{i17} $\forall \a,\b\in X_q\ \left[\a\le \b\sledi \forall^{\infty}k\in \sset(c_q)\ [\pi_{q,\a}(k)=\pi_{\b,\a}(\pi_{q,\b}(k))]\right];$
\item\label{i15} there is $\psi_{q,\a}\in\o^{\o}$ and $b_{q,\a}\ge c_q$ such that $\seq{\pi_{q,\a},\psi_{q,\a},b_{q,\a}}$ is a normal triple;
\end{enumerate}
\end{enumerate}
Let the ordering on $\Q^{\d}$ be given by: $q_1\le q_0$ if and only if
\[\TS
c_{q_1}\le c_{q_0}\ \mbox{and}\ X_{q_1}\supset X_{q_0}\ \mbox{and for every}\ \a\in X_{q_0},\ \pi_{q_1,\a}=\pi_{q_0,\a}.
\]
\end{definition}

In the situation where $\CH$ is replaced by $\MA$, ${\Q}^{\delta}$ would consist of approximations of size $< \c$ instead of countable ones.
Thus ${X}_{q}$ would be a set of size less than $\c$.
\begin{remark}\label{r2}
It is easy to check that $\seq{\Q^{\d},\le}$ defined in this way is a partial order. Note also that $\Q^{\d}\neq 0$. Namely, if $\d=0$, then we can take $q=\seq{c,0,0,0}$ for any $c\in\P$. If $\d\neq 0$, then let $q=\seq{c_q,\g_q,X_q,\seq{\pi_{q,\a}:\a\in X_q}}$ be such that: $c_q$ is arbitrary in $\P$; $\g_q=0$; $X_{q}=\set{0}$; $\pi_{q,0}\in\o^{\o}$ is given by: for $k\in \sset(c_q)$ let $\pi_{q,0}(k)=n$ for $k\in c_q(n)$, while $\pi_{q,0}(k)=0$ otherwise. First note that conditions (\ref{i11}-\ref{i13}) of Definition \ref{d14} are satisfied. Because $\pi_{q,0}''\sset(c_q)=\o$ we know that (\ref{i16}) holds. It is also easy to see that $\seq{\pi_{q,0},\id,c_q}$ is a normal triple by definition of $\pi_{q,0}$ so condition (\ref{i15}) is true. To see that condition (\ref{i17}) is also true note that $\pi_{0,0}=\id$ by Definition \ref{d22}(\ref{i3}). So $q\in \Q^{\d}$.
\end{remark}

\begin{remark}\label{r101}
Let $q=\seq{c_q,\g_q,X_q,\seq{\pi_{q,\a}:\a\in X_q}}\in \Q^{\d}$. Let $c_{q'}\in \P$ be such that $c_{q'}\le c_q$. Then $q'=\seq{c_{q'},\g_q,X_q,\seq{\pi_{q,\a}:\a\in X_q}}$ satisfies conditions (\ref{i11}), (\ref{i12}), (\ref{i13}), (\ref{i17}) and (\ref{i15}) of Definition \ref{d14}. Moreover, if $q'$ also satisfies Definition \ref{d14}(\ref{i16}), then $q'\in \Q^{\d}$ and $q'\le q$.
\end{remark}
Instead of forcing with the poset ${\Q}^{\delta}$, we would like to build a sufficiently generic filter over it in the ground model itself.
${\Q}^{\delta}$ needs to be countably closed for this to be feasible.
We prove this fact next.
The next lemma is the crux of the whole construction.
We briefly sketch the idea of its proof.
So suppose that $\langle {q}_{n}: n 
\in \omega \rangle$ is a decreasing sequence of conditions in ${\Q}^{\delta}$.
We want to find a lower bound.
There are four natural cases to consider.
We start with the simpler ones.
The most trivial case is when $\delta = 0$.
Then we just have a decreasing sequence in $\P$ and bounding them is easy.
Next, it could be the case that for all $n \in \omega$, ${\gamma}_{{q}_{n}} = \gamma$, for some fixed $\gamma < \delta$.
Then we essentially have a fixed ultrafilter ${\cu}_{\gamma}$, a descending sequence in $\P$, and a fixed map taking each element of this sequence into ${\cu}_{\gamma}$.
We wish to find a bound for this sequence in $\P$ whose image is still in ${\cu}_{\gamma}$.
Lemma \ref{t36} is set up precisely to handle this situation, so we apply it.
The third case is when the ${\gamma}_{{q}_{n}}$ form an increasing sequence converging to $\delta$.
Then we have a decreasing sequence in $\P$, some countable cofinal $Y \subset \delta$, and a sequence of maps taking members of the decreasing sequence in $\P$ to various ultrafilters indexed by $Y$.
We would like to find a lower bound for this decreasing sequence in $\P$ whose images under each of the given maps are in the corresponding ultrafilters.
This is almost like the situation in Lemma \ref{t30}, expect that $e$ and its associated maps are missing.
So we first apply Lemma \ref{t101} to find these things, and then apply Lemma \ref{t30} to them.
The final and trickiest case is when the ${\gamma}_{{q}_{n}}$ form an increasing sequence converging to some $\mu < \delta$.
Then the ultrafilter ${\cu}_{\mu}$ must have been constructed to anticipate this situation.
This is where clause (\ref{i8}) of Definition \ref{d22} enters.
We have a decreasing sequence in $\P$, a countable cofinal $Y \subset \mu$, and a sequence of maps as before.
We would like to find a lower bound for this decreasing sequence in $\P$ as well as a new map associated with ${\cu}_{\mu}$ in such a way that the images of this lower bound under all of the maps, both old and new, are in the corresponding ultrafilters.
Clause (\ref{i8}) of Definition \ref{d22} says precisely that this is possible.  
\begin{lemma}\label{t91}
For any decreasing sequence of conditions $\seq{q_n:n\!<\!\o}$ in $\Q^{\d}$ there is $q\!\in\! \Q^{\d}$ so that $\forall n\!<\!\o [q\!\le\! q_n]$. Moreover, if $\forall n<\o [X_{q_{n+1}}\!=\!X_{q_n}]$, then $X_{q}\!=\!X_{q_0}$.
\end{lemma}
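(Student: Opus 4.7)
The plan is to let $X = \bigcup_{n<\omega} X_{q_n}$, $\gamma_* = \sup_n \gamma_{q_n}$, and in every case to take $\pi_{q,\alpha}$ for $\alpha \in X$ to be $\pi_{q_n,\alpha}$ for any $n$ with $\alpha \in X_{q_n}$ (well defined because the ordering on $\Q^\d$ freezes previously chosen maps). Clauses (\ref{i17}) and (\ref{i15}) of Definition \ref{d14} for $q$ will then transfer from those of the $q_n$: any relation that held for cofinitely many elements of $\sset(c_{q_n}) \supset^* \sset(c_q)$ persists on $\sset(c_q)$, and the normal triple witnesses $b_{q_n,\alpha}$ still satisfy $c_q \leq c_{q_n} \leq b_{q_n,\alpha}$ by Remark \ref{r5}(\ref{i204}). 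The real task is to choose $c_q \in \P$ with $c_q \leq c_{q_n}$ for every $n$ while ensuring (\ref{i16}) at every $\alpha \in X_q$. Since $\langle \gamma_{q_n} \rangle$ is nondecreasing, I split into four cases based on its asymptotic behaviour.

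If $\delta = 0$, a routine fusion using Remark \ref{r5}(\ref{i203}) gives a common lower bound for $\langle c_{q_n}\rangle$ in $\P$, with no $\pi$-data to handle. If $\gamma_{q_n}$ is eventually constant at some $\gamma < \delta$, then $\gamma$ is the maximum of $X_{q_n}$ from some stage on, so $\pi := \pi_{q_0,\gamma}$ is fixed; I apply Lemma \ref{t36} to $\cu_\gamma$, $\pi$, and $d_m = c_{q_m}$ to obtain $c_q \in \P$ with $c_q \leq c_{q_m}$ for every $m$ and $\pi''\sset(c_q) \in \cu_\gamma$. For each $\alpha \in X$ with $\alpha < \gamma$, clause (\ref{i17}) applied to an appropriate $q_n$ combined with (\ref{i5}) of Definition \ref{d22} then yields $\pi_{q,\alpha}''\sset(c_q) \supset^* \pi_{\gamma,\alpha}''(\pi''\sset(c_q)) \in \cu_\alpha$, so (\ref{i16}) holds; I set $X_q = X$.

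If $\gamma_* = \delta$, then $X$ is countable and cofinal in $\delta$ (so $\cf(\delta) = \omega$), and the ultrafilters $\langle \cu_\alpha : \alpha \in X \rangle$ are distinct rapid P-points by (\ref{i4}) and (\ref{i9}) of Definition \ref{d22}. Fix a cofinal $Y \subset X$ of order type $\omega$ and apply Lemma \ref{t101} with $e = c_{q_0}$ to $\langle \cu_\alpha : \alpha \in Y \rangle$ -- its hypotheses (\ref{i101})--(\ref{i103}) are exactly clauses (\ref{i5}), (\ref{i0}), and (\ref{i6}) of Definition \ref{d22} -- obtaining auxiliary maps $\tilde\pi_\alpha$ ($\alpha \in Y$) that serve as the $\pi_{\d,\alpha}$ demanded in Lemma \ref{t30}. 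Lemma \ref{t30}, fed with $e$ and the $\tilde\pi_\alpha$ on one side and $\pi_\alpha := \pi_{q_n,\alpha}$, $d_j := c_{q_j}$ on the other, then produces $d^*, e^*, \pi$. Setting $c_q = d^*$ and $X_q = X$ gives (\ref{i16}) for every $\alpha \in Y$ by combining (\ref{i81}) and (\ref{i82}) to get $\pi_{q,\alpha}''\sset(c_q) \supset^* \tilde\pi_\alpha''\sset(e^*) \in \cu_\alpha$, and for $\alpha \in X \setminus Y$ by commutation through some $\beta \in Y$ above $\alpha$. The final and crux case is $\gamma_* = \mu$ for some $\mu < \delta$ with $\gamma_{q_n}$ strictly increasing to $\mu$, hence $\cf(\mu) = \omega$. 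Here clause (\ref{i8}) of Definition \ref{d22} applies at $\mu$ with $X$ cofinal in $\mu$, $d_j = c_{q_j}$, and $\pi_\alpha = \pi_{q_n,\alpha}$; hypotheses (\ref{i21})--(\ref{i23}) are precisely clauses (\ref{i16}), (\ref{i17}), (\ref{i15}) of Definition \ref{d14} holding for the $q_n$'s. This produces $i^* < \c$, $d^* \in \P$, and $\pi$ satisfying (\ref{i31})--(\ref{i34}). Setting $c_q = d^*$, $X_q = X \cup \{\mu\}$, and $\pi_{q,\mu} = \pi$ completes the construction: (\ref{i16}) at $\mu$ follows from $\pi''\sset(d^*) = \sset(c^\mu_{i^*}) \in \cu_\mu$, and at $\alpha \in X$ by (\ref{i32}) together with clause (\ref{i5}).

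The moreover statement is automatic: the hypothesis $X_{q_{n+1}} = X_{q_n}$ is incompatible with Case 4 (which requires $\gamma_{q_n}$ to strictly increase), and in the other three cases $X_q = X = X_{q_0}$ by construction. The principal obstacle is Case 4 -- it is the only case in which a brand-new ordinal enters $X_q$ and a previously unspecified map $\pi_{q,\mu}$ must be manufactured on the fly. This forces the use of the anticipation clause (\ref{i8}) of Definition \ref{d22}, which is precisely why that clause was built into the notion of a $\d$-generic sequence.
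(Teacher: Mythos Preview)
Your proof is correct and follows essentially the same four-case decomposition as the paper (the paper organizes it as $\gamma \in Y$ versus $\gamma \notin Y$ with three subcases, but the content is identical), invoking Lemma \ref{t36}, Lemma \ref{t101} followed by Lemma \ref{t30}, and clause (\ref{i8}) of Definition \ref{d22} in exactly the same places. One small slip: in your eventually-constant case you write $\pi := \pi_{q_0,\gamma}$ and $d_m = c_{q_m}$, but $\gamma$ need not lie in $X_{q_0}$; you should shift to $\pi_{q_{n_0},\gamma}$ and $d_m = c_{q_{n_0+m}}$ so that the hypothesis $d_0 \le b$ of Lemma \ref{t36} is met by the normal-triple witness $b_{q_{n_0},\gamma}$ --- the paper does precisely this.
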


\begin{proof}
Assume that we are given a decreasing sequence of conditions $\seq{q_n:n<\o}$ in $\Q^{\d}$, i.e. $q_{n+1}\le q_n$ for $n<\o$. Define $Y=\bigcup_{n<\o}X_{q_n}$ and $\g=\sup(Y)$. Note that $Y\in [\d]^{\le\o}$. Also, if $\forall n<\o\ [X_{q_{n+1}}=X_{q_n}]$, then $Y=X_{q_0}$. So the moreover part of the lemma holds as long as we find $q$ such that $X_q = Y$. We will consider two cases: either $\g\in Y$ or $\g\notin Y$.

\vskip2mm\noindent
Case I: $\g\in Y$. Then there is $n_0<\o$ such that $\g\in X_{q_{n_0}}$. So $\g=\g_{q_{n_0}}$ and note that $\g<\d$ because $X_{q_{n_0}}\subset \d$. Notice that $\g_{q_{n+1}}\ge \g_{q_n}$ for every $n<\o$, so $\g_{q_n}=\g_{q_{n_0}}$ for every $n\ge n_0$. Also, by Definition \ref{d14} we know that $\g\in X_{q_n}$ for $n\ge n_0$. We apply Lemma \ref{t36} in such a way that: $d_n$ in Lemma \ref{t36} is $c_{q_{n+n_0}}$ ($n<\o$); $\cu$ is $\cu_{\g}$; $\pi$ is $\pi_{q_{n_0},\g}$; $\psi$ is $\psi_{q_{n_0},\g}$ and $b$ is $b_{q_{n_0},\g}$.
It is easy to see that the hypotheses of Lemma \ref{t36} are satisfied. So there is $d\in\P$ such that $\pi''\sset(d)\in\cu_{\g}$ and $d\le d_n$ for every $n<\o$. Now we will prove that the condition $q=\seq{d,\g,Y,\seq{\pi_{q,\a}:\a\in Y}}$ is as required, where $\pi_{q,\a}$ is $\pi_{q_{n+n_0},\a}$ for any $n<\o$ such that $\a\in X_{q_{n+n_0}}$. To show that $q\in \Q^{\d}$ note that conditions (\ref{i11}-\ref{i13}) of Definition \ref{d14} are clearly satisfied. To prove Definition \ref{d14}(\ref{i17}), fix $\a,\b\in Y$ such that $\a\le \b$. There is $n<\o$ such that $\a,\b\in X_{q_{n_0+n}}$. Since $\sset(d)\subset^*\sset(c_{q_{n_0+n}})$ and $\pi_{q,\a}=\pi_{q_{n_0+n},\a}$ and $\pi_{q,\b}=\pi_{q_{n_0+n},\b}$, by Definition \ref{d14}(\ref{i17}) for $q_{n_0+n}$ we have $\forall^{\infty}k\in \sset(d)\ [\pi_{q,\a}(k)=\pi_{\b,\a}(\pi_{q,\b}(k))]$ as required. To see that Definition \ref{d14}(\ref{i16}) is true take arbitrary $\b\in Y$. First notice that $\pi_{q_{n_0},\g}''\sset(d)\in \cu_{\g}$. So (\ref{i16}) is true in case $\b=\g$. If $\b<\g$ consider the set $Z=\pi_{\g,\b}''(\pi_{q,\g}''\sset(d))$. It belongs to $\cu_{\b}$ by already proved (\ref{i16}) for $\g$ and Definition \ref{d22}(\ref{i5}). However, by already proved (\ref{i17}) we have $Z\subset^*\pi_{q,\b}''\sset(d)$ which implies that $\pi_{q,\b}''\sset(d)\in \cu_{\b}$. We still have to prove (\ref{i15}). Take arbitrary $\a\in Y$ and let $n<\o$ be such that $\a\in X_{q_{n_0+n}}$. We know that $\seq{\pi_{q_{n_0+n},\a},\psi_{q_{n_0+n},\a},b_{q_{n_0+n},\a}}$ is a normal triple, that $d\le c_{q_{n_0+n}}\le b_{q_{n_0+n},\a}$ and that $\pi_{q,\a}=\pi_{q_{n_0+n},\a}$. So $d\le b_{q_{n_0+n},\a}$ and $\seq{\pi_{q,\a},\psi_{q_{n_0+n},\a},b_{q_{n_0+n},\a}}$ is as required.

\vskip2mm\noindent
Case II: $\g\notin Y$. Therefore $Y\subset \g$. In this case either $\g=0$ or $\g$ is a limit ordinal such that $\cf(\g)=\o$. So there are three subcases: either $\g=0$ or $\g<\d$ and $\cf(\g)=\o$ or $\g=\d$ and $\cf(\g)=\o$.

\vskip1mm\noindent
Subcase IIa: $\g=0$. Since $Y\subset \g$ we have $Y=0$, so $X_{q_0}=0$ and $\g_{q_0}=0$ and $\g_{q_0}\notin X_{q_0}$. So $\d=\g_{q_0}=0$. In this case all the conditions $q_n$ ($n<\o$) are of the form $q_n=\seq{c_{q_n},0,0,0}$. So it is enough to construct condition $c_q\le c_{q_n}$ ($n<\o$) because in that case $q=\seq{c_q,0,0,0}$ will satisfy $q\le q_n$ for every $n<\o$, and also the moreover part of the lemma. For $n<\o$ let $k_n$ be such that $c_{q_{n+1}}\le_{k_n} c_{q_n}$. Define $m_0=0$ and $m_{n+1}=\max\set{k_n,\max(c_{q_n}({m}_{n}))+2}$ for $n<\o$. Let $c_q(n)=c_{q_n}(m_n)$ for $n<\o$. It is obvious that $c_q\in \P$ and $c_q\le c_{q_n}$ for every $n<\o$.

\vskip1mm\noindent
Subcase IIb: $\cf(\g)=\o$ and $\g<\d$. We apply Definition \ref{d22}(\ref{i8}) as follows: $\mu$ is $\g$, $X$ is $Y$ and $d_n$ is $c_{q_n}$ ($n<\o$).
For $\a\in Y$ let $n<\o$ be minimal such that $\a\in X_{q_n}$. Then we consider $\pi_{\a}$ to be $\pi_{q_n,\a}$, $\psi_{\a}$ to be $\psi_{q_n,\a}$ and $b_{\a}$ to be $b_{q_n,\a}$ - note that if $m\le n$ then $\sset(c_{q_n})\subset^*\sset(c_{q_m})$, so $\pi_{q_n,\a}''\sset(c_{q_m})\in \cu_{\a}$, while if $m>n$, then $\pi_{q_n,\a}=\pi_{q_m,\a}$ and Definition \ref{d14}(\ref{i16}) implies $\pi_{q_n,\a}''\sset(c_{q_m})\in \cu_{\a}$, and so Definition \ref{d22}(\ref{i21}) holds; Definition \ref{d22}(\ref{i23}) is true because $\seq{\pi_{\a},\psi_{\a},b_{\a}}$ is a normal triple and $d_n=c_{q_n}\le b_{q_n,\a}=b_{\a}$; to show that Definition \ref{d22}(\ref{i22}) is satisfied, pick $\a,\b\in Y$ such that $\a\le \b$, let $n<\o$ be minimal such that $\a\in X_{q_n}$, let $m<\o$ minimal such that $\b\in X_{q_m}$ and assume $n\le m$ (case $m\le n$ is symmetric). Then $\a,\b\in X_{q_m}$ so according to Definition \ref{d14}(\ref{i17}) for $q_m$ we have $\forall^{\infty}k\in \sset(d_m)\ [\pi_{{q}_{m},\a}(k)=\pi_{\b,\a}(\pi_{{q}_{m},\b}(k))]$.

Hypothesis of Definition \ref{d22}(\ref{i8}) is satisfied as explained above. So there are $i^*<\c$, $d^*\in \P$ and $\pi,\psi\in\o^{\o}$ which satisfy the conclusion of Definition \ref{d22}(\ref{i8}). Now define condition $q=\seq{d^*,\g,Y\cup\set{\g},\seq{\pi_{q,\a}:\a\in Y\cup\set{\g}}}$, where for $\a\in Y$, $\pi_{q,\a}$ is $\pi_{q_n,\a}$ for the minimal $n<\o$ such that $\a\in X_{q_n}$, while $\pi_{q,\g}$ is $\pi$. When we prove $q\in \Q^{\d}$ it will follow easily that $q\le q_n$ for $n<\o$. So we check conditions (\ref{i11}-\ref{i14}) of Definition \ref{d14}. The only non-trivial condition is (\ref{i14}). First we show (\ref{i17}). Take any $\a,\b\in Y$ such that $\a\le \b$. There are two cases, either $\b=\g$ or $\b\neq \g$. If $\b=\g$, then by Definition \ref{d22}(\ref{i32}) we have $\forall^{\infty}\in \sset(d^*)\ [\pi_{q,\a}(k)=\pi_{\g,\a}(\pi_{q,\g}(k))]$ as required. If $\b<\g$, then pick $n<\o$ such that $\a,\b\in X_{q_n}$. Then since $\sset(d^*)\subset\sset(c_{q_n})$, by Definition \ref{d14}(\ref{i17}) applied to $q_n$ we have $\forall^{\infty} k \in \sset(d^*)\ [\pi_{q,\a}(k)=\pi_{\b,\a}(\pi_{q,\b}(k))]$ as required. Next we prove (\ref{i16}). Let $\a\in Y$. If $\a=\g$, then by Definition \ref{d22}(\ref{i31}) we have $\pi_{q,\g}''\sset(d^*)=\sset(c^{\g}_{i^*}) \in \cu_{\g}$. If $\a<\g$, then by already proved (\ref{i17}) we have $\pi_{\g,\a}''\br{\pi_{q,\g}''\sset(d^*)}\subset^*\pi_{q,\a}''\sset(d^*)$. This together with Definition \ref{d22}(\ref{i5}) gives $\pi_{q,\a}''\sset(d^*)\in \cu_{\a}$ as required. We still have to prove (\ref{i15}). Take arbitrary $\a\in Y$. If $\a=\g$, then $\seq{\pi_{q,\g},\psi,d^*}$ is itself a normal triple. If $\a<\g$ let $n<\o$ be minimal such that $\a\in X_{q_n}$. Then $\seq{\pi_{q,\a},\psi_{q_n,\a},b_{q_n,\a}}$ is a normal triple and $d^*\le c_{q_n}\le b_{q_n,\a}$ as required.

The situation from the moreover part of the lemma does not occur in this subcase.
To see this, suppose otherwise.
Then $Y=X_{q_0}$ and $\g=\g_{q_0}$. 
Since $\g<\d$, by Definition \ref{d14}(\ref{i13}) $\gamma = \g_{q_0}\in X_{q_0}\subset Y$, a contradiction to Case II.

\vskip1mm\noindent
Subcase IIc: $\cf(\g)=\o$ and $\g=\d$. Choose $\seq{\g_n:n<\o}$ such that $\sup\set{\g_n:n<\o}=\d$, and $\g_n<\g_{n+1}$ and $\g_n\in Y$, for every $n<\o$. Now we apply Lemma \ref{t101} as follows: $\cu_n$ is $\cu_{\g_n}$ ($n<\o$) - note that the $\cu_{\g_n}$'s are distinct rapid P-points; $\pi_{m,n}$ is $\pi_{\g_m,\g_n}$ ($n\le m<\o$) - note that by Definition \ref{d22}(\ref{i3}) conditions (\ref{i101}-\ref{i103}) of Lemma \ref{t101} are satisfied.

As we have explained above, hypothesis of Lemma \ref{t101} is satisfied, so there are $e\in\P$ and maps $\pi_{\d,\g_n}$ ($n<\o$) such that
\begin{enumerate}[resume]
\item\label{i4001} $\forall n<\o\ [\pi''_{\d,\g_n} \sset(e)\in \mathcal U_{\g_n}]$;
\item\label{i4002} $\forall n\le m<\o\ \forall^{\infty} k\in \sset(e)\ [(\pi_{\d,\g_n}(k)=\pi_{\g_m,\g_n}(\pi_{\d,\g_m}(k))]$;
\item\label{i4003} for every $n<\o$ there are $\psi_{\d,\g_n}$ and $b_{\d,\g_n}$ such that $\seq{\pi_{\d,\g_n},\psi_{\d,\g_n},b_{\d,\g_n}}$ is a normal triple and $e\le b_{\d,\g_n}$.
\end{enumerate}

We will apply Lemma \ref{t30} as follows: $d_n$ is $c_{q_n}$ for $n<\o$, $e$ is $e$, $\d$ is $\d$ and $f=\id$ - note that $\cf(\d)=\o$; $X$ is $\set{\g_n:n<\o}$ - note that $\d=\sup(X)$ as required in Lemma \ref{t30}; the $\pi_{\d,\a}$ are $\pi_{\d,\a}$, for $\a\in X$ - note that Lemma \ref{t30}(\ref{i72}) is true by (\ref{i4001}-\ref{i4003}); for $n<\o$, $\pi_{\g_n}$ is $\pi_{q_m,\g_n}$, $b_{\g_n}$ is $b_{q_m,\g_n}$, $\psi_{\g_n}$ is $\psi_{q_m,\g_n}$ for the minimal $m < \omega$ such that ${\gamma}_{n} \in {X}_{{q}_{m}}$. We have to show that Lemma \ref{t30}(\ref{i77}-\ref{i79}) are satisfied. First we prove (\ref{i77}). Fix $n<\o$ and let $m < \omega$ be minimal such that $\g_n\in X_{q_m}$. We will show that $\forall j<\o [\pi_{\g_n}''\sset(c_{q_j})=\pi_{q_m,\g_n}''\sset(c_{q_j})\in \cu_{\g_n}]$. There are two cases: either $j\le m$ or $j>m$. If $j\le m$, then $\sset(c_{q_m})\subset^* \sset(c_{q_j})$ and by Definition \ref{d14}(\ref{i16}) applied to $q_m$, $\pi_{{q}_{m},\g_n}''\sset(c_{q_j})\in \cu_{\g_n}$. If $j>m$, then $\g_n\in X_{q_j}$ and ${\pi}_{{q}_{j}, {\gamma}_{n}} = {\pi}_{{q}_{m}, {\gamma}_{n}}$; so we have that $\pi_{{q}_{m},\g_n}''\sset\left({c}_{{q}_{j}}\right)\in \cu_{\g_n}$. Next, we prove (\ref{i78}). Fix $n\le m<\o$. Let $k < \omega$ be minimal such that $\g_n\in X_{q_k}$ and $l < \omega$ minimal such that $\g_m\in {X}_{{q}_{l}}$. Define $j=\max\set{k,l}$. Then $\g_n,\g_m\in X_{q_j}$, and ${\pi}_{{q}_{k}, {\gamma}_{n}} = {\pi}_{{q}_{j}, {\gamma}_{n}}$ and ${\pi}_{{q}_{l}, {\gamma}_{m}} = {\pi}_{{q}_{j}, {\gamma}_{m}}$. By Definition \ref{d14}(\ref{i17}) applied to $q_j$ we have that $\forall^{\infty}k\in \sset(c_{q_j})\ [{\pi}_{{q}_{j}, {\gamma}_{n}}(k)= \pi_{\g_m,\g_n}({\pi}_{{q}_{j}, {\gamma}_{m}}(k))]$. Hence $j$ witnesses (\ref{i78}).
Finally for (\ref{i79}), fix $n < \omega$ and let $m < \omega$ be minimal such that $\g_n\in X_{q_m}$. Since $q_m$ satisfies Definition \ref{d14}(\ref{i15}) we know that $c_{q_m}\le b_{q_m,\g_n}$ and $\seq{\pi_{q_m,\g_n},\psi_{q_m,\g_n},b_{q_m,\g_n}}$ is a normal triple. So (\ref{i79}) is witnessed by $j = m$.

As explained above, the assumptions of Lemma \ref{t30} are satisfied, so there are $e^*,d^*\in\P$ and $\pi,\psi\in\o^{\o}$ which satisfy conditions (\ref{i84}-\ref{i83}) in the conclusion of Lemma \ref{t30}. Consider $q=\seq{d^*,\d,Y,\seq{\pi_{q,\a}:\a\in Y}}$, where for $\alpha \in Y$, $\pi_{q,\a}=\pi_{q_m,\a}$ for the minimal $m<\o$ such that $\a\in X_{q_m}$.
Note that for each $n < \omega$, ${\pi}_{{\gamma}_{n}} = {\pi}_{q, {\gamma}_{n}}$.
If we prove that $q\in \Q^{\d}$ it will follow easily that $q\le q_n$ for $n<\o$ and that $q$ satisfies the moreover part of the lemma. So we check the properties (\ref{i11}-\ref{i14}) of Definition \ref{d14}. Conditions (\ref{i11}-\ref{i13}) are clearly satisfied. We prove (\ref{i16}-\ref{i15}). First we show that (\ref{i17}) is true. Let $\a,\b\in Y$ be such that $\a\le\b$.
Let $m < \omega$ and $k < \omega$ be minimal with $\alpha \in {X}_{{q}_{m}}$ and $\beta \in {X}_{{q}_{k}}$ respectively.
Put $l = \max\{m, k\}$, and note that ${\pi}_{q, \alpha} = {\pi}_{{q}_{l}, \alpha}$ and ${\pi}_{q, \beta} = {\pi}_{{q}_{l}, \beta}$. 
So by definition \ref{d14}(\ref{i17}) applied to ${q}_{l}$ and by the fact that $\sset({d}^{\ast}) \; {\subset}^{\ast} \; \sset({c}_{{q}_{l}})$, $\forallbutfin {k}^{\ast} \in \sset({d}^{\ast})\left[ {\pi}_{q, \alpha}({k}^{\ast}) = {\pi}_{\beta, \alpha}({\pi}_{q, \beta}({k}^{\ast}))\right]$, as required. 
Now we prove (\ref{i16}).
Fix $\a\in Y$, and let $n < \omega$ be such that ${\gamma}_{n} \geq \alpha$.
Note that ${\pi}_{\delta, {\gamma}_{n}}''\sset({e}^{\ast}) \in {\cu}_{{\gamma}_{n}}$ and that ${\pi}_{\delta, {\gamma}_{n}}''\sset({e}^{\ast}) \; {\subset}^{\ast} \; {\pi}_{{\gamma}_{n}}''\sset({d}^{\ast})$.
Thus ${\pi}_{q, {\gamma}_{n}}''\sset({d}^{\ast}) \in {\cu}_{{\gamma}_{n}}$, and so ${\pi}_{{\gamma}_{n}, \alpha}''{\pi}_{q, {\gamma}_{n}}''\sset({d}^{\ast}) \in {\cu}_{\alpha}$.
By (\ref{i17}), ${\pi}_{{\gamma}_{n}, \alpha}''{\pi}_{q, {\gamma}_{n}}''\sset({d}^{\ast}) \; {\subset}^{\ast} \; {\pi}_{q, \alpha}''\sset({d}^{\ast})$, whence ${\pi}_{q, \alpha}''\sset({d}^{\ast}) \in {\cu}_{\alpha}$ as needed.
Finally for (\ref{i15}), fix $\a\in Y$ and let $m < \omega$ be minimal such that $\a\in X_{q_m}$. Then setting ${b}_{q, \alpha} = {b}_{{q}_{m}, \alpha}$ and ${\psi}_{q, \alpha} = {\psi}_{{q}_{m}, \alpha}$ fulfills (\ref{i15}).
\end{proof}
${\Q}^{\delta}$ is required to be $< \c$ closed when carrying out the constructing under $\MA$.
This can be proved in the same way as Theorem \ref{t91} by using the appropriate generalizations of the lemmas from Section \ref{sec:mainlemmas} and the regularity of $\c$, which follows from $\MA$.

We next turn towards showing that various sets are dense in ${\Q}^{\delta}$.
These are the dense sets we will want to meet when building our ``sufficiently generic'' filter for ${\Q}^{\delta}$.
Meeting these dense sets will ensure that the sequences $\langle {c}^{\delta}_{i}: i < \c \rangle$ and $\langle {\pi}_{\delta, \alpha}: \alpha \leq \delta \rangle$, which we intend to read off from the generic filter, will satisfy the conditions of Definition \ref{d22} when they are added to $S$.
The first density condition states that for each $q \in {\Q}^{\delta}$, there is a $q' \leq q$ such that ${c}_{q'}$ is a ``fast'' subsequence of ${c}_{q}$.
This is needed to ensure that ${\cu}_{\delta}$ is rapid, and it will also play a role in ensuring that it is an ultrafilter.
\begin{lemma}\label{t5}
For $q\in \Q^{\d}$ and strictly increasing $f\in\o^{\o}$ there is $q'\le q$ such that ${X}_{q} = {X}_{q'}$ and that for every $n<\o$ there is $m\ge f(n)$ so that $c_{q'}(n)=c_q(m)$. Moreover, there is $q''\le q'$ such that for every $n<\o$ we have $c_{q''}(n)\in [c_{q'}(n)]^{n+1}$.
\end{lemma}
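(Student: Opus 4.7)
The plan is to define $c_{q'}(n)=c_q(m_n)$ for a strictly increasing sequence $\seq{m_n:n<\o}$ with $m_n\ge f(n)$, while keeping $\g_{q'}=\g_q$, $X_{q'}=X_q$, and $\pi_{q',\a}=\pi_{q,\a}$ for each $\a\in X_q$. Strict monotonicity of $\seq{m_n}$ will make $c_{q'}\in \P$, $c_{q'}\le_0 c_q$, and $\sset(c_{q'})\subset \sset(c_q)$, which forces clauses (\ref{i11})--(\ref{i13}), (\ref{i17}) and (\ref{i15}) of Definition \ref{d14} for $q'$ to follow directly from those for $q$; the substantive task is to arrange clause (\ref{i16}), namely $\pi_{q,\a}''\sset(c_{q'})\in \cu_{\a}$ for every $\a\in X_q$, while the $m_n$ grow at least as fast as $f$ demands. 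The case $\d=0$ forces $X_q=\emptyset$ and is immediate with $m_n=f(n)$, so I focus on $\d>0$.

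For $\d>0$, I enumerate $X_q=\set{\a_k:k<K}$ with $1\le K\le \o$ and for each $k$ pick $n_k<\o$ with $c_q\le_{n_k}b_{q,\a_k}$. The first step is to invoke Lemma \ref{t14} on the normal triple $\seq{\pi_{q,\a_k},\psi_{q,\a_k},b_{q,\a_k}}$, the condition $c_q$, and the set $A_k=\pi_{q,\a_k}''\sset(c_q)\lp n_k\rp$, which lies in $\cu_{\a_k}$ since its complement in $\pi_{q,\a_k}''\sset(c_q)$ is finite. Enumerating $A_k$ increasingly as $\set{a_{k,i}:i<\o}$, this produces finite fibers $F_i^k=\set{m:\pi_{q,\a_k}''c_q(m)=\set{a_{k,i}}}$ and shows that $\tau_k(i):=\min(F_i^k\setminus n_k)$ is a strictly increasing function $\o\to\o$ satisfying $\tau_k(i)\ge i$.

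The main obstacle is threading a single subsequence $\seq{m_n}$ through enough of each $\tau_k''\o$ simultaneously, and this is where rapidity is decisive. By Lemma \ref{t33} applied to $\cu_{\a_k}$ and $A_k$, I will choose $B_k\subset A_k$ with $B_k\in \cu_{\a_k}$ and $B_k(j)\ge a_{k,f(2^{k+1}(j+1))}$ for every $j$; writing $B_k=\set{a_{k,i_j^k}:j<\o}$ with $i_0^k<i_1^k<\cdots$ then forces $i_j^k\ge f(2^{k+1}(j+1))$, so $T_k:=\set{\tau_k(i_j^k):j<\o}$ is an infinite subset of $\o$ whose increasing enumeration satisfies $T_k(j)=\tau_k(i_j^k)\ge i_j^k\ge f(2^{k+1}(j+1))$ and, by the definition of $\tau_k$, $\pi_{q,\a_k}''\bigcup_{m\in T_k}c_q(m)=B_k\in \cu_{\a_k}$. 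Taking $M=\bigcup_{k<K}T_k$, infinite since $K\ge 1$, and enumerating $M=\set{m_n:n<\o}$ increasingly, a geometric series count --- $T_k(j)<f(n)$ forces $2^{k+1}(j+1)<n$, so $\abs{T_k\cap [0,f(n))}\le n/2^{k+1}$ and hence $\abs{M\cap [0,f(n))}\le \sum_{k\ge 0}n/2^{k+1}=n$ --- delivers $m_n\ge f(n)$. Setting $c_{q'}(n)=c_q(m_n)$ and $q'=\seq{c_{q'},\g_q,X_q,\seq{\pi_{q,\a}:\a\in X_q}}$, clause (\ref{i16}) is verified from $\pi_{q,\a_k}''\sset(c_{q'})\supset B_k\in \cu_{\a_k}$, and $q'\le q$ with $X_{q'}=X_q$ as required.

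For the moreover part, since $\abs{c_{q'}(n)}\ge n+1$ for every $n$ (by induction from $\abs{c_{q'}(0)}\ge 1$ and the strict increase of sizes imposed by $\P$), any pointwise choice $c_{q''}(n)\in [c_{q'}(n)]^{n+1}$ gives $c_{q''}\in \P$ with $c_{q''}\le_0 c_{q'}$. Because $c_{q'}\le b_{q,\a}$ makes $\pi_{q,\a}$ eventually constant on each block $c_{q'}(n)$, it takes the same single value on $c_{q''}(n)\subset c_{q'}(n)$, whence $\pi_{q,\a}''\sset(c_{q''})=\pi_{q,\a}''\sset(c_{q'})$ modulo a finite set and clause (\ref{i16}) is inherited along with the remaining clauses; thus $q''=\seq{c_{q''},\g_q,X_q,\seq{\pi_{q,\a}:\a\in X_q}}\le q'$ is a condition in $\Q^{\d}$.
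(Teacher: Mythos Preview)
Your proof is correct and takes a genuinely different route from the paper's. The paper splits into cases based on whether $\gamma_q=\delta$ or $\gamma_q<\delta$. When $\gamma_q<\delta$, it exploits the fact that $\gamma_q\in X_q$ is a maximum: rapidity of the single ultrafilter $\cu_{\gamma_q}$ is used to thin $c_q$, and clause~(\ref{i16}) for the remaining $\alpha\in X_q$ is then obtained by factoring through $\pi_{\gamma_q,\alpha}$. When $\gamma_q=\delta$ and $\cf(\delta)=\omega$ there is no such maximum, and the paper invokes the heavy Lemma~\ref{t30} (with $e=c_q$ and $d_n=c_q$ for all $n$) to produce the fast subsequence. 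Your argument is uniform across both cases: you apply rapidity (Lemma~\ref{t33}) to \emph{every} $\cu_{\alpha_k}$ for $\alpha_k\in X_q$, obtaining index sets $T_k$ with $T_k(j)\ge f(2^{k+1}(j+1))$, and then thread a single subsequence through their union via the geometric bound $\sum_k n/2^{k+1}\le n$. This avoids Lemma~\ref{t30} entirely and gives a shorter, self-contained proof; the trade-off is that the paper's case split foregrounds the ``factor through the top coordinate'' mechanism that it reuses elsewhere, while your argument is tailored to this particular density statement. The ``moreover'' paragraph matches the paper's treatment.
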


\begin{proof}
We first show how to get $q'$.
We will distinguish two cases: when $\g_q=\d$ and when $\g_q<\d$.

\vskip2mm\noindent
Case I: $\g_q=\d$. We know that $X_q\subset \d$, $\sup(X_q)=\g_q=\d$ and $\abs{X_q}\le \o$, so either $\d=0$ or $\d$ is a limit ordinal with $\cf(\d)=\o$.

\vskip1mm\noindent
Subcase Ia: $\g_q=\d$ and $\d=0$.
In this case $q$ is of the form $\seq{c_q,0,0,0}$.
For every $n < \omega$, let ${c}_{q'}(n) = {c}_{q}(f(n))$.
Then ${c}_{q'} \in \P$ because $f$ is strictly increasing.
Also it is clear that ${c}_{q'} \leq {c}_{q}$.
Consequently $q'=\seq{c_{q'},0,0,0}\le q$ is as required.

\vskip1mm\noindent
Subcase 1b: $\g_q=\d$ and $\cf(\d)=\o$. We apply Lemma \ref{t30} in such a way that: $e$ is $c_q$ and $\d$ is $\d$; $d_n$ is $c_q$ for $n<\o$; $f$ is $f$; $X$ is $X_q$; maps $\pi_{\a}$ are maps $\pi_{q,\a}$ ($\a\in X_q$); maps $\pi_{\d,\a}$ are maps $\pi_{q,\a}$ ($\a\in X_q$). The conditions of Lemma \ref{t30} are clearly satisfied. Hence, there is $e^*\le_0 c_q$ such that for every $n<\o$ there is $m\ge f(n)$ so that $e^*(n)\subset c_q(m)$. We will construct numbers $k_n$ by induction on $n$ so that for every $n<\o$ there is $m\ge n$ so that $e^*(m)\subset c_q(k_n)$ and that $\sset(e^*)\subset \bigcup_{n<\o}c_q(k_n)$. Let $k_0$ be such that $e^*(0)\subset c_{q}(k_0)$. Now assume that numbers $k_m$ have been chosen for every $m\le n$, and define $k_{n+1}$ as follows: let $l$ be maximal such that $e^*(l)\subset c_q(k_n)$ and define $k_{n+1}$ as the unique number such that $e^*(l+1)\subset c_q(k_{n+1})$. Now for every $n<\o$ define $c_{q'}(n)=c_{q}(k_n)$. We will prove that the condition $q'=\seq{c_{q'},\d,X_q,\seq{\pi_{q,a}:\a\in X_q}}$ is as required. Since for every $n$, $e^*(n)\subset c_q(m)$ for $m\ge f(n)$ we have that $k_n\ge f(n)$, so $\forall n<\o\ \exists l\ge f(n)\ c_{q'}(n)=c_q(l)$, as required in the statement of the lemma. By Remark \ref{r101}, in order to prove $q'\in \Q^{\d}$ and $q'\le q$ it is enough to prove that $q'$ satisfies Definition \ref{d14}(\ref{i16}). So pick $\a\in X_q$. Since $\sset(e^*)\subset \bigcup_{n<\o}c_q(k_n)=\sset(c_{q'})$ we know that $\pi''_{q,\a}\sset(e^*)\subset^* \pi''_{q,\a}\sset(c_{q'})$, but since $\pi''_{q,\a}\sset(e^*)\in \cu_{\a}$ we have $\pi''_{q',\a}\sset(c_{q'})\in \cu_{\a}$ as required.

\vskip2mm\noindent
Case II: ${\gamma}_{q} < \delta$.
Note that by Definition \ref{d14}(\ref{i13}) ${\gamma}_{q} \in {X}_{q}$.
Let ${n}_{0}$ be such that ${c}_{q} \; {\leq}_{{n}_{0}} \; {b}_{q, {\gamma}_{q}}$.
Then $a = {\pi}_{q, {\gamma}_{q}}'' \sset({c}_{q})\lp {n}_{0} \rp \in {\cu}_{{\gamma}_{q}}$.
Now by Lemma \ref{t14}, for each $n < \omega$, ${m}_{n} = \max\{m < \omega: {\pi}_{q, {\gamma}_{q}}''{c}_{q}(m) = \{a(n)\} \}$ is well-defined and ${m}_{n} < {m}_{n + 1}$.
As ${\cu}_{{\gamma}_{q}}$ is rapid, there is $Y \in {\cu}_{{\gamma}_{q}}$ such that $Y \subset a$ and for each $n \in \omega$, there is ${l}_{n} \geq f(n)$ such that $Y(n) = a({l}_{n})$.
Now it is clear that for each $n \in \omega$, ${m}_{{l}_{n}} \geq {l}_{n} \geq f(n)$.
Define ${c}_{q'}(n) = {c}_{q}({m}_{{l}_{n}})$.
It is clear that ${c}_{q'} \in \P$ and that ${\pi}_{q, {\gamma}_{q}}''\sset({c}_{q'}) = Y$.
So by Remark \ref{r101}, we will finish the proof by showing that $q'=\seq{c_{q'},\g_q,X_q,\seq{\pi_{q,\a}:\a\in X_q}}$ satisfies Definition \ref{d14}(\ref{i16}). So let $\a\in X_q$. We know that $\pi''_{q,\g_q}\sset(c_{q'})=Y\in \cu_{\g_q}$ so $\pi''_{\g_q,\a}Y\in \cu_{\a}$. Now we have that $\pi''_{q,\a}\sset(c_{q'})=^*\pi''_{\g_q,\a}(\pi''_{q,\g_q}\sset(c_{q'}))=\pi''_{\g_q,\a}Y\in \cu_{\a}$ as required.

To get $q''$, define $c_{q''}$ as follows: for every $n<\o$ pick an arbitrary $c_{q''}(n)\in [c_{q'}(n)]^{n+1}$. 
This is possible because $\left| {c}_{q'}(n) \right| \geq n + 1$.
Let $q''=\seq{c_{q''},\g_q,X_q,\seq{\pi_{q,\a}:\a\in X_q}}$. 
To see that $q''\in \Q^{\d}$ note that conditions (\ref{i11}-\ref{i13}), (\ref{i17}), and (\ref{i15}) of Definition \ref{d14} are clearly satisfied.
Condition (\ref{i16}) holds because for every $\a\in X_q$, $\pi''_{q,\a}\sset(c_{q''}) \; {=}^{\ast} \; \pi''_{q,\a}\sset(c_{q'})$.
\end{proof}
The next lemma ensures that for any given $X \in \cp(\omega)$, every condition in ${\Q}^{\delta}$ has an extension that ``decides'' $X$.
This will make ${\cu}_{\delta}$ into an ultrafilter.
\begin{lemma}\label{t3}
For every $q\in\Q^{\d}$ and for every $X\in \cp(\o)$ there is $q'\le q$ such that $X_{q'}=X_q$ and that $\sset(c_{q'})\subset X$ or $\sset(c_{q'})\subset \o\setminus X$.
\end{lemma}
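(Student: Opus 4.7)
The plan is to thin $c_q$ to a subcondition $c_{q'} \le c_q$ all of whose members lie on one side of $X$, while preserving requirement (\ref{i16}) of Definition \ref{d14}. All other clauses of Definition \ref{d14} transfer for free to $q' = \seq{c_{q'}, \g_q, X_q, \seq{\pi_{q,\a}: \a \in X_q}}$, so by Remark \ref{r101} the only substantive verification will be (\ref{i16}).

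First I would apply Lemma \ref{t5} with $f(n) = 2n+1$ to obtain $q_0 \le q$ with $X_{q_0} = X_q$ and $\abs{c_{q_0}(n)} \ge 2n+2$ for all $n$. For each $n$, at least one of $\abs{c_{q_0}(n) \cap X^0}, \abs{c_{q_0}(n) \cap X^1}$ is at least $n+1$, so pick $\epsilon(n) \in \set{0,1}$ with $\abs{c_{q_0}(n) \cap X^{\epsilon(n)}} \ge n+1$. The trivial subcase $X_{q_0} = \emptyset$ forces $\d = 0$; in it, some fixed $\epsilon$ coincides with $\epsilon(n)$ for infinitely many $n$, and enumerating those indices as $n_0 < n_1 < \ldots$ and defining $c_{q'}(k) \in [c_{q_0}(n_k) \cap X^\epsilon]^{k+1}$ completes the construction.

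The core of the argument is the case $X_{q_0} \ne \emptyset$. For each $\a \in X_{q_0}$, Lemma \ref{t14} applied to the normal triple provided by Definition \ref{d14}(\ref{i15}) for $q_0$ yields $N_\a$ so that for every $n \ge N_\a$, $\pi_{q_0,\a}''c_{q_0}(n) = \set{v_n^\a}$ is a singleton; moreover $\set{v_n^\a : n \ge N_\a} =^* \pi_{q_0,\a}''\sset(c_{q_0}) \in \cu_\a$. Split this image as $V_\a^0 \cup V_\a^1$, where $V_\a^j = \set{v_n^\a : n \ge N_\a \wedge \epsilon(n) = j}$. The central observation, which I expect to be the main point of the argument, is that there is a single $\epsilon \in \set{0,1}$ for which $V_\a^\epsilon \in \cu_\a$ for every $\a \in X_{q_0}$. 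The key computation behind this is that whenever $\a \le \b$ in $X_{q_0}$, clause (\ref{i17}) of Definition \ref{d14} applied to $q_0$ forces $v_n^\a = \pi_{\b,\a}(v_n^\b)$ for all sufficiently large $n$, whence $\pi_{\b,\a}''V_\b^j \subset^* V_\a^j$; combining this with Definition \ref{d22}(\ref{i5}) yields the implication $V_\b^j \in \cu_\b \sledi V_\a^j \in \cu_\a$ for any $\a \le \b$ in $X_{q_0}$ and any $j \in \set{0,1}$. Since $X_{q_0}$ is linearly ordered by $\le$, a short case analysis on two hypothetical witnesses $\a_1, \a_2 \in X_{q_0}$ with $V_{\a_1}^0 \notin \cu_{\a_1}$ and $V_{\a_2}^1 \notin \cu_{\a_2}$ produces a contradiction via the ultrafilter property $V_\a^0 \cup V_\a^1 \in \cu_\a$, so one value $\epsilon$ works uniformly at every $\a \in X_{q_0}$.

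With such $\epsilon$ fixed, let $I = \set{n < \o : \epsilon(n) = \epsilon}$; this is infinite because any $V_\a^\epsilon$ is infinite while the fibers of $n \mapsto v_n^\a$ are finite by Lemma \ref{t14}. Enumerate $I$ as $n_0 < n_1 < \ldots$ and pick $c_{q'}(k) \in [c_{q_0}(n_k) \cap X^\epsilon]^{k+1}$, which is possible because $\abs{c_{q_0}(n_k) \cap X^\epsilon} \ge n_k + 1 \ge k+1$. Then $c_{q'} \in \P$ and $c_{q'} \le c_{q_0}$ follow immediately from the construction. For each $\a \in X_{q_0}$, $\pi_{q,\a}''\sset(c_{q'}) \supset \set{v_{n_k}^\a : n_k \ge N_\a} =^* V_\a^\epsilon \in \cu_\a$, verifying Definition \ref{d14}(\ref{i16}), so $q' \in \Q^{\d}$ with $q' \le q$ and $\sset(c_{q'}) \subset X^\epsilon$ as required, by Remark \ref{r101}.
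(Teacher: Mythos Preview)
Your proof is correct and, in fact, cleaner than the paper's. The paper splits into three cases according to whether $\g_q=\d=0$, $\g_q=\d$ with $\cf(\d)=\o$, or $\g_q<\d$. In the last case it works at the single coordinate $\g_q$; in the middle case it chooses a cofinal sequence $\seq{\b_n}$ in $X_{q'}$, picks $i_{\b_n}$ at each $\b_n$, stabilizes the colour along an infinite subset, and then pushes the result down to every $\b\in X_{q'}$ via the same implication $V_\b^j\in\cu_\b\Rightarrow V_\a^j\in\cu_\a$ that you isolate. Your contribution is to notice that this downward propagation, once stated, handles all nonempty $X_{q_0}$ uniformly: two hypothetical witnesses $\a_1,\a_2$ to opposite failures are comparable, and comparing them immediately yields a contradiction. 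This eliminates the case split and the need for the exponential threshold $2^{n+1}$ the paper uses in Subcase~Ib; your $2n+2$ suffices throughout. The trade-off is minimal: the paper's Case~II argument is marginally shorter at the single top coordinate, but your version avoids repeating essentially the same computation three times.
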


\begin{proof}
In the same way as in the proof of Lemma \ref{t5} we distinguish the following cases: either $\g_q=\d=0$ or $\g_q=\d$ and $\cf(\d)=\o$ or $\g_q<\d$.

\vskip2mm\noindent
Case I: $\g_q=\d$. As already mentioned this case has two subcases.

\vskip1mm\noindent
Subcase Ia: $\g_q=\d=0$. In this case $q$ is of the form $q=\seq{c_q,0,0,0}$. For $i=0,1$ consider the sets $X_i=\set{n<\o: \abs{c_q(n)\cap X^i}\ge (n+1)/2}$. Note $X_0\cup X_1=\o$ so either $X_0$ or $X_1$ infinite. Assume without loss of generality that $X_0$ is infinite. Then $\abs{c_q(X_0(2n+1))\cap X}\ge n+1$ for every $n<\o$. Define $c_{q'}\in \P$ as follows: for $n<\o$ let $c_{q'}(n)=[c_q(X_0(2n+1))\cap X]^{n+1}$. It easy to see that $c_{q'}\in \P$ and $c_{q'}\le c_q$. So for $q'=\seq{c_{q'},0,0,0}$ we have $q'\le q$ and $\sset(c_{q'})\subset X$. If we assumed $X_1$ is infinite, then we would obtain $\sset(c_{q'})\subset \o\setminus X$.

\vskip1mm\noindent
Subcase Ib: $\g_q=\d$ and $\cf(\d)=\o$. First according to Lemma \ref{t5} there is $q'\le q$ such that $X_{q'}=X_q$ and that for every $n<\o$ there is $m\ge 2^{n+1}$ such that $c_{q'}(n)=c_q(m)$. Note that this implies that for every $n<\o$ we have $\abs{c_{q'}(n)}\ge 2^{n+1}$. Let us consider two sets $A_i=\set{n<\o:\abs{c_{q'}(n)\cap X^i}\ge 2^n}$ ($i=0,1$). Fix $\a\in X_{q'}$. Because $q'\in \Q^{\d}$ we have $\pi''_{q',\a}\sset(c_{q'})\in \cu_{\a}$. So since $A_0\cup A_1=\o$ we have that
\[\TS
\pi''_{q',\a}\br{\bigcup_{n\in A_0}c_{q'}(n)}\cup\pi''_{q',\a}\br{\bigcup_{n\in A_1}c_{q'}(n)}\in \cu_{\a}.
\]
Since $\cu_{\a}$ is an ultrafilter, there is $i_{\a}\in 2$ such that $\pi''_{q',\a}\br{\bigcup_{n\in A_{i_{\a}}}c_{q'}(n)}\in \cu_{\a}$. Now that we have defined $i_{\a}$ for every $\a\in X_{q'}$, pick ordinals $\b_n\in X_{q'}$ so that the sequence $\seq{\b_n:n<\o}$ is strictly increasing and cofinal in $\d$.
There is $K\in [\o]^{\o}$ and $i\in \set{0,1}$ so that $i_{\b_n}=i$ for every $n\in K$. Now pick any $\b\in X_{q'}$. Because $K$ is infinite and $\seq{\b_n:n<\o}$ is cofinal in $\d$, there is $n\in K$ so that $\b_n>\b$. We know that $\pi''_{q',\b_n}\br{\bigcup_{n\in A_i}c_{q'}(n)}\in \cu_{\b_n}$. But according to Definition \ref{d22}(\ref{i5}) and Definition \ref{d14}(\ref{i17})we have
\[\TS
\pi''_{q',\b}\br{\bigcup_{n\in A_i}c_{q'}(n)}\supset^* \pi''_{\b_n,\b}\br{\pi''_{q',\b_n}\br{\bigcup_{n\in A_i}c_{q'}(n)}}\in \cu_{\b}
\]
which shows that for every $\b\in X_{q'}$ we have that $\pi''_{q',\b}\br{\bigcup_{n\in A_i}c_{q'}(n)}\in\cu_{\b}$. Now define $d\in\P$ as follows: for every $n<\o$ pick arbitrary $d(n)\in [c_{q'}(A_i(n))\cap X^i]^{n+1}$. The sequence $d=\seq{d(n):n<\o}$ belongs to $\P$ because $A_i$ was chosen in such a way that for $n<\o$ we have $\abs{c_{q'}(A_i(n))\cap X^i}\ge 2^n\ge n+1$. Finally, we will show that $q''=\seq{d,\g_{q'},X_{q},\seq{\pi_{q',\a}:\a\in X_{q'}}}$ is as required (note $X_{q''}=X_{q'}=X_q$). It is enough to prove that $q''\in \Q^{\d}$, because then $q''\le q$ and $\sset(c_{q''})=\sset(d)\subset X^i$ easily follows. By Remark \ref{r101} it is enough to show that Definition \ref{d14}(\ref{i16}) is satisfied. We show that $\pi''_{q'',\b}\sset(d)\supset^* \pi''_{q',\b}\br{\bigcup_{n\in A_i}c_{q'}(n)}\in\cu_{\b}$ holds for $\b\in X_{q'}$. Consider the set $C=\pi''_{q',\b}\br{\bigcup_{n\in A_i}c_{q'}(n)}\setminus \pi''_{q'',\b}\sset(d)$. Let $m<\o$ be such that $c_{q'}\le_m b_{q',\b}$. Note that for any $n\ge m$ we have $\pi''_{q'',\b}d(n)=\pi''_{q',\b}c_{q'}(A_i(n))$. This implies that $C\subset \pi''_{q',\b}\br{\bigcup_{n<A_i(m)}c_{q'}(n)}$ which shows that $\abs{C}<\o$ as required.

\vskip2mm\noindent
Case II: ${\gamma}_{q} < \delta$.
Let $q' \leq q$ be such that ${X}_{q'} = {X}_{q}$ and that for each $n \in \omega$, ${c}_{q'}(n) = {c}_{q}(m)$ for some $m \geq 2n + 1$.
Note that ${\gamma}_{q} = {\gamma}_{q'} \in {X}_{q'}$ and that for each $n \in \omega$, $\left| {c}_{q'}(n) \right| \geq 2n + 2$.
For $i \in 2$, let ${X}_{i} = \left\{n \in \omega: \left| {X}^{i} \cap {c}_{q'}(n) \right| \geq n + 1\right\}$.
Note that $\omega = {X}_{0} \cup {X}_{1}$.
Therefore $\left( {\pi}_{q', {\gamma}_{q}}'' {\bigcup}_{n \in {X}_{0}}{{c}_{q'}(n)} \right) \cup \left( {\pi}_{q', {\gamma}_{q}}'' {\bigcup}_{n \in {X}_{1}}{{c}_{q'}(n)} \right) = {\pi}_{q', {\gamma}_{q}}''\sset({c}_{q'}) \in {\cu}_{{\gamma}_{q}}$.
Fix $i \in 2$ such that ${\pi}_{q', {\gamma}_{q}}''{\bigcup}_{n \in {X}_{i}}{{c}_{q'}(n)} \in {\cu}_{{\gamma}_{q}}$.
Then ${X}_{i}$ is infinite and $\left| {c}_{q'}({X}_{i}(k)) \cap {X}^{i} \right| \geq {X}_{i}(k) + 1 \geq k + 1$, for each $k \in \omega$.
Choose ${c}_{q''}(k) \in {\left[ {c}_{q'}({X}_{i}(k)) \cap {X}^{i} \right]}^{k + 1}$.
Then ${c}_{q''} = \langle {c}_{q''}(k): k \in \omega \rangle \in \P$ and ${c}_{q''} \leq {c}_{q'}$.
Moreover, ${\pi}_{q', {\gamma}_{q}}''{\bigcup}_{n \in {X}_{i}}{{c}_{q'}(n)} \; {\subset}^{\ast} \;{\pi}_{q', {\gamma}_{q}}''\sset({c}_{q''})$.
Thus ${\pi}_{q', {\gamma}_{q}}''\sset({c}_{q''}) \in {\cu}_{{\gamma}_{q}}$.
Furthermore, ${\pi}_{{\gamma}_{q}, \alpha}'' {\pi}_{q', {\gamma}_{q}}''\sset({c}_{q''}) \; {\subset}^{\ast} \; {\pi}_{q', \alpha}''\sset({c}_{q''})$, for each $\alpha \in {X}_{q'}$. 
So we also have that ${\pi}_{q', \alpha}''\sset({c}_{q''}) \in {\cu}_{\alpha}$, for each $\alpha \in {X}_{q'}$.
Therefore by Remark \ref{r101} $q'' = \langle {c}_{q''}, {\gamma}_{q}, {X}_{q'}, \langle {\pi}_{q', \alpha}: \alpha \in {X}_{q'} \rangle \rangle$ is as required.
\end{proof}
We would like it to be the case that for each $\beta < \delta$, there is a $q$ in our ``sufficiently generic'' filter over ${\Q}^{\delta}$ with $\beta \in {X}_{q}$ because we would like to read the map ${\pi}_{\delta, \beta}$ from the filter.
So we next prove that for each $\beta < \delta$, every $q \in {\Q}^{\delta}$ has an extension $q'$ with $\beta \in {X}_{q'}$.
But let us first interject two technical lemmas that are easy to prove.
\begin{lemma}\label{t10}
For $q\in \Q^{\d}$, $\a\in X_q$ and $a\in \cu_{\a}$ there is $q'\le q$ such that $X_{q'}=X_q$ and $\pi_{q',\a}''\sset(c_{q'})\subset a$.
\end{lemma}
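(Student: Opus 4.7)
The plan is to form $q' \le q$ by thinning out $c_{q}$ to keep only those blocks whose $\pi_{q,\a}$-image lies in $a$, while leaving $\g_{q}$, $X_{q}$, and the family $\langle \pi_{q,\b} : \b \in X_{q} \rangle$ untouched. By Remark \ref{r101}, the only non-trivial verification is Definition \ref{d14}(\ref{i16}) for the resulting tuple $q' = \langle c_{q'}, \g_q, X_q, \langle \pi_{q,\b} : \b \in X_q\rangle \rangle$.

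By Definition \ref{d14}(\ref{i15}) fix a normal triple $\langle \pi_{q,\a}, \psi_{q,\a}, b_{q,\a}\rangle$ with $c_q \le b_{q,\a}$, and let $n_0$ satisfy $c_q \le_{n_0} b_{q,\a}$. For each $m \ge n_0$ there is a unique $m' \ge m$ with $c_q(m) \subset b_{q,\a}(m')$, so $\pi_{q,\a}''c_q(m) = \{\psi_{q,\a}(m')\}$ is a singleton. Put $B = \{m \ge n_0 : \pi_{q,\a}''c_q(m) \subset a\}$. Since $a \cap \pi_{q,\a}''\sset(c_q)\lp n_0\rp$ belongs to $\cu_\a$ (both factors do, by Definition \ref{d14}(\ref{i16}) for $q$), Lemma \ref{t14} applied to it in place of the ``$a$'' there shows that $B$ is infinite; in fact $\pi_{q,\a}''\bigcup_{m \in B} c_q(m) = a \cap \pi_{q,\a}''\sset(c_q)\lp n_0\rp$. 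Enumerate $B = \{m_n : n < \o\}$ in increasing order and set $c_{q'}(n) = c_q(m_n)$. Because $c_q \in \P$ and the $m_n$ are strictly increasing, $c_{q'} \in \P$; and $c_{q'} \le_0 c_q$ since $m_n \ge n$ for all $n$.

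It remains to check Definition \ref{d14}(\ref{i16}): $\pi_{q,\b}''\sset(c_{q'}) \in \cu_\b$ for every $\b \in X_q$. When $\b = \a$ this is immediate, since $\pi_{q,\a}''\sset(c_{q'}) = a \cap \pi_{q,\a}''\sset(c_q)\lp n_0\rp \in \cu_\a$; this set also lies in $a$, which yields the additional conclusion of the lemma. When $\b < \a$, Definition \ref{d14}(\ref{i17}) for $q$ gives $\pi_{q,\b}(k) = \pi_{\a,\b}(\pi_{q,\a}(k))$ for cofinitely many $k \in \sset(c_q)$, so $\pi_{q,\b}''\sset(c_{q'}) \; {=}^{*}\; \pi_{\a,\b}''\pi_{q,\a}''\sset(c_{q'}) \in \cu_\b$ by Definition \ref{d22}(\ref{i5}).

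The main obstacle is the remaining case $\a < \b$, where the commutativity flows in the wrong direction, so I argue by contradiction. Let $R = \bigcup\{c_q(m) : n_0 \le m,\ m \notin B\}$, so that $\sset(c_q)\lp n_0\rp = \sset(c_{q'}) \cup R$. Since $\pi_{q,\b}''\sset(c_q)\lp n_0\rp \; {=}^{*} \; \pi_{q,\b}''\sset(c_q) \in \cu_\b$, if $\pi_{q,\b}''\sset(c_{q'}) \notin \cu_\b$ then $\pi_{q,\b}''R \in \cu_\b$. Applying Definition \ref{d22}(\ref{i5}) and Definition \ref{d14}(\ref{i17}) for $q$, we obtain $\pi_{q,\a}''R \; {=}^{*} \; \pi_{\b,\a}''\pi_{q,\b}''R \in \cu_\a$. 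But by construction of $B$, $\pi_{q,\a}''R \subset \o \setminus a$, contradicting $a \in \cu_\a$. Hence $\pi_{q,\b}''\sset(c_{q'}) \in \cu_\b$, completing the verification.
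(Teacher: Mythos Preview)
Your proof is correct and takes a genuinely different route from the paper's. The paper simply invokes Lemma~\ref{t3} with $X = {\pi}_{q,\a}^{-1}(a \cap {\pi}_{q,\a}''\sset({c}_{q}))$: that lemma (whose proof in turn calls on Lemmas~\ref{t5} and~\ref{t30}) produces $q' \le q$ with $X_{q'} = X_q$ and $\sset({c}_{q'})$ contained in either $X$ or its complement, and the complement alternative is ruled out because ${\pi}_{q',\a}''\sset({c}_{q'}) \in \cu_\a$ would then be disjoint from $a \cap {\pi}_{q,\a}''\sset({c}_{q}) \in \cu_\a$. So the paper outsources all the work---including the delicate verification of Definition~\ref{d14}(\ref{i16})---to the already-established machinery of Lemma~\ref{t3}.

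Your argument is more self-contained: you directly thin $c_q$ along the set $B$ of blocks whose $\pi_{q,\a}$-image lands in $a$, and then verify (\ref{i16}) by hand. The case split on the position of $\b$ relative to $\a$ is the price for avoiding Lemma~\ref{t3}, and your contradiction argument for $\a < \b$ is the key new idea: if $\pi_{q,\b}''\sset(c_{q'}) \notin \cu_\b$ then $\pi_{q,\b}''R \in \cu_\b$ for the complementary union $R$, and pushing down via $\pi_{\b,\a}$ forces $\pi_{q,\a}''R \in \cu_\a$, which is impossible since $\pi_{q,\a}''R$ is disjoint from $a$. This is a clean elementary argument that does not touch the heavier lemmas; the paper's approach, by contrast, is shorter on the page precisely because Lemma~\ref{t3} has already absorbed the analogous case analysis.
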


\begin{proof}
Consider the set $b=\pi_{q,\a}''\sset(c_q)$. By Definition \ref{d14}(\ref{i16}) $b\in \cu_{\a}$, which implies that $a\cap b\in \cu_{\a}$. Denote $V=\pi_{q,\a}^{-1}(a\cap b)$. By Lemma \ref{t3} there is $q'\le q$ such that $\sset(c_{q'})\subset V$ or $\sset(c_{q'})\subset \o\setminus V$. Assume that $\sset(c_{q'})\subset \o\setminus V$. By Definition \ref{d14}(\ref{i16}) we have $\pi_{q',\a}''\sset(c_{q'})\in \cu_{\a}$  and $\pi_{q',\a}=\pi_{q,\a}$. So $(\pi_{q',\a}''\sset(c_{q'}))\cap (a\cap b)=0$ which is impossible. Hence $\sset(c_{q'})\subset V$ implying $\pi_{q',\a}''\sset(c_{q'})\subset a\cap b$ as required.
\end{proof}

\begin{lemma}\label{t0}
Let $q\in \Q^{\d}$, $\b \in \delta$, and $Y=X_q\cup\set{\b}$. There is $q'\le q$ such that $X_{q'}=X_q$ and that for every $\zeta,\xi,\mu$ satisfying $\mu\in X_{q'}$, $\zeta,\xi\in Y$, and $\zeta\le \xi\le \mu$, there is $N<\o$ such that for every $k,l\in \sset(c_{q'})$ if $N\le k\le l$, then $\pi_{\mu,\zeta}(\pi_{q',\mu}(k))=\pi_{\xi,\zeta}(\pi_{\mu,\xi}(\pi_{q',\mu}(k)))$ and $\pi_{\mu,\xi}(\pi_{q',\mu}(k))\le\pi_{\mu,\xi}(\pi_{q',\mu}(l))$.
\end{lemma}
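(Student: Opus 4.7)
The plan is to reduce to the countable case by enumerating the triples $(\zeta,\xi,\mu)$ that need to be handled, arrange each via a single application of Lemma \ref{t10}, and then collapse everything into one condition using the countable closure of $\Q^{\d}$ established in Lemma \ref{t91}.

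Since $X_q$ is countable, the set $T$ of triples $(\zeta,\xi,\mu)$ with $\mu\in X_q$, $\zeta,\xi\in Y$, and $\zeta\le\xi\le\mu$ is countable; enumerate it as $\set{(\zeta_n,\xi_n,\mu_n):n<\o}$. For each $n$ I would first locate a set $a_n\in\cu_{\mu_n}$ on which both the commutativity $\pi_{\mu_n,\zeta_n}(k)=\pi_{\xi_n,\zeta_n}(\pi_{\mu_n,\xi_n}(k))$ and the monotonicity of $\pi_{\mu_n,\xi_n}$ hold. For commutativity, Definition \ref{d22}(\ref{i0}) applied to $\zeta_n\le\xi_n\le\mu_n$ supplies an $i_0<\c$ such that the two sides agree on a cofinite subset of $\sset(c^{\mu_n}_{i_0})$. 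For monotonicity (non-trivial only when $\xi_n<\mu_n$), Definition \ref{d22}(\ref{i6}) yields a normal triple $\seq{\pi_{\mu_n,\xi_n},\psi_{\mu_n,\xi_n},b_{\mu_n,\xi_n}}$ together with some $i_1<\c$ such that $c^{\mu_n}_{i_1}\le b_{\mu_n,\xi_n}$, and Remark \ref{r0} then makes $\pi_{\mu_n,\xi_n}$ non-decreasing on a tail of $\sset(c^{\mu_n}_{i_1})$. Taking $i_2=\max\set{i_0,i_1}$ and discarding a large enough initial segment of $\sset(c^{\mu_n}_{i_2})$, I take $a_n$ to be the resulting tail, which lies in $\cu_{\mu_n}$ and witnesses both properties.

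Next, I refine $q$ iteratively. Set $q_0=q$. Given $q_n\in\Q^{\d}$ with $X_{q_n}=X_q$, apply Lemma \ref{t10} at the coordinate $\mu_n\in X_q$ with the set $a_n\in\cu_{\mu_n}$ to obtain $q_{n+1}\le q_n$ with $X_{q_{n+1}}=X_{q_n}$ and $\pi_{q_{n+1},\mu_n}''\sset(c_{q_{n+1}})\subset a_n$; note that $\pi_{q_{n+1},\mu_n}=\pi_{q,\mu_n}$ by the definition of the ordering on $\Q^{\d}$. The moreover clause of Lemma \ref{t91} then delivers a single lower bound $q'\in\Q^{\d}$ with $X_{q'}=X_q$.

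To verify the conclusion, fix a triple $(\zeta_n,\xi_n,\mu_n)$. Since $q'\le q_{n+1}$, $\sset(c_{q'})\subset^*\sset(c_{q_{n+1}})$, so $\pi_{q',\mu_n}(k)\in a_n$ for every $k$ in a tail of $\sset(c_{q'})$; this yields commutativity at each such $k$. Independently, Remark \ref{r0} applied to the normal triple $\seq{\pi_{q,\mu_n},\psi_{q,\mu_n},b_{q,\mu_n}}$ and $d=c_{q'}$ (using $c_{q'}\le c_q\le b_{q,\mu_n}$ and transitivity from Remark \ref{r5}(\ref{i204})) makes $\pi_{q',\mu_n}=\pi_{q,\mu_n}$ non-decreasing on a tail of $\sset(c_{q'})$. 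Combining the two conclusions with the fact that $\pi_{\mu_n,\xi_n}$ is non-decreasing on $a_n$ by construction, for $k\le l$ in a sufficiently large tail of $\sset(c_{q'})$ both values $\pi_{q',\mu_n}(k),\pi_{q',\mu_n}(l)$ lie in $a_n$ in non-decreasing order, so $\pi_{\mu_n,\xi_n}$ preserves that order. The main conceptual step is the observation that Lemma \ref{t10} can install each $a_n$ without altering the maps attached to other coordinates in $X_q$, enabling countably many such refinements to be coherently assembled by the countable closure of $\Q^{\d}$.
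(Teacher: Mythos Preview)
Your proof is correct and follows essentially the same approach as the paper: enumerate the countably many triples, find for each an $a_n\in\cu_{\mu_n}$ on which both commutativity and monotonicity hold, iteratively apply Lemma \ref{t10}, and pass to a lower bound via Lemma \ref{t91}. You spell out the provenance of $a_n$ (from Definition \ref{d22}(\ref{i0}) and (\ref{i6}) plus Remark \ref{r0}) and the verification step in slightly more detail than the paper, but the argument is the same.
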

\begin{proof}
Let $V = \{\langle \zeta, \xi, \mu \rangle: \mu \in {X}_{q} \wedge \zeta, \xi \in Y \wedge \zeta \leq \xi \leq \mu\}$.
$V$ is countable, so let $\{\langle {\zeta}_{n}, {\xi}_{n}, {\mu}_{n}\rangle: n < \omega\}$ enumerate it, possibly with repetitions.
Build by induction on $n$ a decreasing sequence $\langle {q}_{n}: n \in \omega \rangle \subset {\Q}^{\delta}$ such that $\forall n \in \omega \left[{X}_{{q}_{n + 1}} = {X}_{{q}_{n}}\right]$.
Let ${q}_{0} = q$.
Fix $n \in \omega$, and suppose that ${q}_{n} \leq q$ is given.
By the definition of a $\delta$-generic sequence, there exists ${a}_{n} \in {\cu}_{{\mu}_{n}}$ such that $\forall {k}^{\ast} \in {a}_{n}\left[{\pi}_{{\mu}_{n}, {\zeta}_{n}}({k}^{\ast}) = {\pi}_{{\xi}_{n}, {\zeta}_{n}}({\pi}_{{\mu}_{n}, {\xi}_{n}}({k}^{\ast}))\right]$ and $\forall {k}^{\ast}, {l}^{\ast} \in {a}_{n}\left[{k}^{\ast} \leq {l}^{\ast} \implies {\pi}_{{\mu}_{n}, {\xi}_{n}}({k}^{\ast}) \leq {\pi}_{{\mu}_{n}, {\xi}_{n}}({l}^{\ast}) \right]$.
Apply Lemma \ref{t10} to ${q}_{n} \in {\Q}^{\delta}$, ${\mu}_{n} \in {X}_{{q}_{n}}$, and ${a}_{n} \in {\cu}_{{\mu}_{n}}$, to find ${q}_{n + 1} \leq {q}_{n}$ such that ${\pi}_{{q}_{n + 1}, {\mu}_{n}}''\sset({c}_{{q}_{n + 1}}) \subset {a}_{n}$ and ${X}_{{q}_{n + 1}} = {X}_{{q}_{n}}$.
This concludes the construction of $\langle {q}_{n}: n \in \omega \rangle$.
Find $q' \in {\Q}^{\delta}$ such that $\forall n \in \omega\left[q' \leq {q}_{n}\right]$ and ${X}_{q'} = {X}_{{q}_{0}} = {X}_{q}$.
We check that $q'$ is as needed.
Fix $n < \omega$.
As ${\mu}_{n} \in {X}_{q'}$ and $q' \leq {q}_{n + 1}$, there is $N$ such that for all $k, l \in \sset({c}_{q'})$, if $N \leq k \leq l$, then $k, l \in \sset({c}_{{q}_{n + 1}})$ and ${\pi}_{q', {\mu}_{n}}(k) \leq {\pi}_{q', {\mu}_{n}}(l)$.
Fixing any such $k$ and $l$, let ${k}^{\ast} = {\pi}_{q', {\mu}_{n}}(k)$ and ${l}^{\ast} = {\pi}_{q', {\mu}_{n}}(l)$.
Then ${k}^{\ast}, {l}^{\ast}\in {a}_{n}$ and ${k}^{\ast} \leq {l}^{\ast}$.
Therefore, ${\pi}_{{\mu}_{n}, {\zeta}_{n}}({k}^{\ast}) = {\pi}_{{\xi}_{n}, {\zeta}_{n}}({\pi}_{{\mu}_{n}, {\xi}_{n}}({k}^{\ast}))$ and ${\pi}_{{\mu}_{n}, {\xi}_{n}}({k}^{\ast}) \leq {\pi}_{{\mu}_{n}, {\xi}_{n}}({l}^{\ast})$, as needed. 
\end{proof}
\begin{lemma}\label{t2}
For $q\in \Q^{\d}$ and $\b<\d$ there is $q'\in \Q^{\d}$ such that $q'\le q$ and $\b\in X_{q'}$.
\end{lemma}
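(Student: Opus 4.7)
The plan is to split on whether $\beta \in X_q$ (trivial, set $q'=q$) and otherwise on whether $\beta > \gamma_q$ or $\beta < \gamma_q$. The subcase $\beta = \gamma_q$ with $\beta \notin X_q$ is impossible: by (\ref{i13}) of Definition \ref{d14} this would force $\gamma_q = \delta > \beta$, a contradiction. In the case $\beta > \gamma_q$, $\gamma_q < \beta < \delta$ yields $\gamma_q \in X_q$ via (\ref{i13}); in the case $\beta < \gamma_q$, $\sup X_q = \gamma_q > \beta$ supplies some $\mu \in X_q$ with $\mu > \beta$. In both non-trivial cases $X_q \ne 0$ is automatic from $\beta < \delta$.

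In the case $\beta > \gamma_q$, I will apply Definition \ref{d22}(\ref{i1}) to the normal triple $\seq{\pi_{q,\gamma_q},\psi_{q,\gamma_q},b_{q,\gamma_q}}$ coming from (\ref{i15}), with $d = c_q$ (so $d \le b_{q,\gamma_q}$ by (\ref{i15}) and $\pi_{q,\gamma_q}''\sset(d) \in \cu_{\gamma_q}$ by (\ref{i16})). To obtain commutativity with every $\alpha \in X_q$ simultaneously, I first use (\ref{i0}) of Definition \ref{d22} to pick, for each $\alpha \in X_q$, an $i_\alpha < \c$ such that $\pi_{\beta,\alpha} = \pi_{\gamma_q,\alpha} \circ \pi_{\beta,\gamma_q}$ holds cofinitely on $\sset(c^\beta_{i_\alpha})$; since $\cf(\c) > \omega$ under $\CH$ and $X_q$ is countable, I can fix a single $i^* < \c$ with $c^\beta_{i^*} \le c^\beta_{i_\alpha}$ in $\P$ for every $\alpha \in X_q$. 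Feeding $a = \sset(c^\beta_{i^*}) \in \cu_\beta$ into (\ref{i1}) then supplies $b \subset^* a$, a normal triple $\seq{\pi,\psi,d^*}$ with $d^* \le_0 c_q$ and $\pi''\sset(d^*) = b$, together with $\pi_{q,\gamma_q}(k) = \pi_{\beta,\gamma_q}(\pi(k))$ for all $k \in \sset(d^*)$. I then take $q' = \seq{d^*,\beta,X_q \cup \set{\beta},\seq{\pi_{q,\alpha}:\alpha \in X_q} \cup \set{\pi_{q',\beta} := \pi}}$. The verification of Definition \ref{d14} is routine: (\ref{i17}) for $\alpha \in X_q$ paired with $\beta$ follows by chasing $\pi_{q,\alpha}(k) =^* \pi_{\gamma_q,\alpha}(\pi_{q,\gamma_q}(k)) = \pi_{\gamma_q,\alpha}(\pi_{\beta,\gamma_q}(\pi(k))) =^* \pi_{\beta,\alpha}(\pi(k))$ along $\sset(d^*)$, where the last step uses $b \subset^* \sset(c^\beta_{i_\alpha})$, and (\ref{i16}) for $\alpha \in X_q$ then follows by pushing through $\pi_{\beta,\alpha}$ via (\ref{i5}) of Definition \ref{d22}.

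In the case $\beta < \gamma_q$, I fix $\mu \in X_q$ with $\mu > \beta$ and first apply Lemma \ref{t0} to $q$ and $\beta$, obtaining $q_1 \le q$ with $X_{q_1} = X_q$ and the commutativity and monotonicity clauses of that lemma. Let $N$ be the threshold supplied by Lemma \ref{t0} for the triple $\zeta = \beta, \xi = \beta, \mu' = \mu$; using Remark \ref{r5}(\ref{i201}) I pick $M \ge N$ so that monotonicity of $\pi_{\mu,\beta}\circ\pi_{q_1,\mu}$ holds on $\sset(c_{q_1})\lp M\rp$, set $b_{q',\beta}(n) = c_{q'}(n) = c_{q_1}(n+M)$, let $\psi_{q',\beta}(n)$ be the constant value of $\pi_{\mu,\beta}\circ\pi_{q_1,\mu}$ on $c_{q_1}(n+M)$ (well-defined since $c_{q_1}(n+M) \subset b_{q_1,\mu}(n')$ for some $n'$, on which $\pi_{q_1,\mu}$ is constantly $\psi_{q_1,\mu}(n')$), and define $\pi_{q',\beta}(k) = \psi_{q',\beta}(n)$ on $b_{q',\beta}(n)$ and $0$ outside $\sset(b_{q',\beta})$. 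Then $\seq{\pi_{q',\beta},\psi_{q',\beta},b_{q',\beta}}$ is a normal triple by construction, with $\ran(\psi_{q',\beta})$ infinite because $\pi_{q',\beta}''\sset(c_{q'}) \; {=}^{\ast} \; \pi_{\mu,\beta}''\pi_{q_1,\mu}''\sset(c_{q'}) \in \cu_\beta$, and the commutativity (\ref{i17}) for pairs involving $\beta$ and any $\alpha \in X_q$ is handled by three applications of Lemma \ref{t0}: $(\zeta,\xi,\mu') = (\alpha,\beta,\mu)$ for $\alpha \le \beta$, $(\beta,\alpha,\mu)$ for $\beta \le \alpha \le \mu$, and $(\beta,\mu,\alpha)$ for $\mu \le \alpha$, combined in the last case with (\ref{i17}) for $q_1$ to replace $\pi_{\alpha,\mu}\circ\pi_{q_1,\alpha}$ by $\pi_{q_1,\mu}$ cofinitely. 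The main obstacle is precisely this normal-triple setup: the non-decreasing property of $\psi_{q',\beta}$ is what the monotonicity clause of Lemma \ref{t0} (combined with the shift by $M$) is designed to deliver, and constancy of $\pi_{q',\beta}$ on each $b_{q',\beta}(l)$ requires taking $b_{q',\beta}$ from the shifted $c_{q_1}$ (whose pieces lie inside single $b_{q_1,\mu}$-pieces on which $\pi_{q_1,\mu}$ is genuinely constant) rather than from $b_{q_1,\mu}$ itself --- an issue that does not arise in the case $\beta > \gamma_q$, where the normal triple $\seq{\pi,\psi,d^*}$ is handed to us directly by Definition \ref{d22}(\ref{i1}).
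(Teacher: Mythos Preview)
Your proof is correct and follows essentially the same strategy as the paper: split on the position of $\beta$ relative to $\gamma_q$, invoke Definition~\ref{d22}(\ref{i1}) when $\beta>\gamma_q$, and define $\pi_{q',\beta}$ by composition through a larger coordinate when $\beta<\gamma_q$. Two minor differences are worth noting. First, in the case $\beta>\gamma_q$ you bound the countably many $i_\alpha$ using $\cf(\c)>\omega$ (which is K\"onig's theorem, so there is no need to invoke $\CH$ here), whereas the paper instead uses the P-point property of $\cu_\beta$ to pseudo-intersect the sets $a_\alpha$; both work. Second, your treatment of $\beta<\gamma_q$ is slightly cleaner than the paper's: by routing through an arbitrary $\mu\in X_q$ with $\mu>\beta$ and handling all three relative positions of $\alpha,\beta,\mu$ via Lemma~\ref{t0}, you avoid the paper's separate sub-split into $\gamma_{q^*}=\delta$ (where a minimal $\gamma^*\ge\beta$ is used) versus $\gamma_{q^*}<\delta$ (where $\gamma_{q^*}$ itself is used). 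One small point of presentation: your threshold $M$ should also be taken at least as large as a witness for $c_{q_1}\le b_{q_1,\mu}$, so that $\pi_{q_1,\mu}$ is genuinely constant on each $c_{q_1}(n+M)$; you use this fact but do not quite state the requirement on $M$.
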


\begin{proof}
Assume $\b\notin X_q$. According to Lemma \ref{t0} applied to $q$ and $\b$ there is $q^*\le q$ such that $X_{q^*}=X_q$ and that for every $\zeta,\xi,\mu$ satisfying $\mu \in {X}_{{q}^{\ast}}$, $\zeta, \xi \in {X}_{{q}^{\ast}} \cup \set{\b}$ and $\zeta\le \xi\le \mu$ there is $N<\o$ such that for every $k,l\in \sset(c_{q^*})$ if $N\le k\le l$, then $\pi_{\mu,\zeta}(\pi_{q^*,\mu}(k))=\pi_{\xi,\zeta}(\pi_{\mu,\xi}(\pi_{q^*,\mu}(k)))$ and $\pi_{\mu,\xi}(\pi_{q^*,\mu}(k))\le\pi_{\mu,\xi}(\pi_{q^*,\mu}(l))$. In the same way as in the proof of Lemma \ref{t5} we have the following cases: either $\g_{q^*}=\d=0$ or $\g_{q^*}=\d$ and $\cf(\d)=\o$ or $\g_{q^*}<\d$.

\vskip2mm\noindent
Case I: $\g_{q^*}=\d$. As we mentioned above there are two subcases.

\vskip1mm\noindent
Subcase Ia: $\g_{q^*}=\d=0$. Note that in this case the statement is vacuous because there is no $\b<\d$.

\vskip1mm\noindent
Subcase Ib: $\g_{q^*}=\d$ and $\cf(\d)=\o$. Since $\sup(X_{q^*})=\g_{q^*}=\d$ and $\b<\d$, let $\g^*\in X_{q^*}$ be minimal such that $\b\le \g^*$.
Let $m < \omega$ be minimal such that ${c}_{{q}^{\ast}} \; {\leq}_{m} \; {b}_{{q}^{\ast}, {\gamma}^{\ast}}$ and for all $k, l \in \sset({c}_{{q}^{\ast}})\lp m \rp$, if $k \leq l$, then ${\pi}_{{\gamma}^{\ast}, \beta}({\pi}_{{q}^{\ast}, {\gamma}^{\ast}}(k)) \leq {\pi}_{{\gamma}^{\ast}, \beta}({\pi}_{{q}^{\ast}, {\gamma}^{\ast}}(l))$.
Define
\[
q'=\seq{c_{q^*},\d,X_{q^*}\cup\set{\b},\seq{\pi_{q^*,\a}:\a\in X_{q^*}\cup\set{\b}}},
\]
where $\pi_{q^*,\b}$ is as follows: for $k\in \sset(c_{q^*})\lp m\rp$ let $\pi_{q^*,\b}(k)=\pi_{\g^*,\b}(\pi_{q^*,\g^*}(k))$, while $\pi_{q^*,\b}(k)=0$ otherwise.
It suffices to prove that $q'\in \Q^{\d}$ because it is then easy to see that $q'\le {q^*}$ and $\b\in X_{q'}$ hold.
Properties (\ref{i11}-\ref{i13}) are clearly satisfied.
So we check (\ref{i14}).
First it is clear that (\ref{i16}) holds by the definition of ${\pi}_{{q}^{\ast}, \beta}$ and by the fact that ${c}_{q'} = {c}_{{q}^{\ast}}$.
Next, we check (\ref{i17}).
Pick arbitrary $\a, \g \in X_{q'}$ such that $\a \le \g$.
We will distinguish four cases: either ($\a \neq \b$ and $\g \neq \b$), or ($\alpha = \beta = \gamma$), or ($\g = \b$ and  $\alpha \neq \beta$), or ($\a = \b$ and $\gamma \neq \beta$).
First, if $\a \neq \b$ and $\g \neq \b$, then (\ref{i17}) holds because $q^*\in \Q^{\d}$ and $\alpha, \gamma \in {X}_{{q}^{\ast}}$.
Next, if $\alpha = \beta = \gamma$, then (\ref{i17}) trivially holds.
Now assume that $\g = \b$ and $\alpha \neq \beta$.
Then $\alpha \in {X}_{{q}^{\ast}}$.
There exists ${k}_{1}$ such that for each $k \in \sset({c}_{{q}^{\ast}})\lp {k}_{1} \rp$ the following hold:
$\pi_{q^*, \gamma}(k)=\pi_{\g^*, \gamma}(\pi_{q^*,\g^*}(k))$, ${\pi}_{{\gamma}^{\ast}, \alpha}({\pi}_{{q}^{\ast}, {\gamma}^{\ast}}(k)) = {\pi}_{\gamma, \alpha}({\pi}_{{\gamma}^{\ast}, \gamma}({\pi}_{{q}^{\ast}, {\gamma}^{\ast}}(k)))$, and ${\pi}_{{q}^{\ast}, \alpha}(k) = {\pi}_{{\gamma}^{\ast}, \alpha}({\pi}_{{q}^{\ast}, {\gamma}^{\ast}}(k))$.
Thus for every $k\in \sset(c_{q^*})\lp k_1\rp$ we have $\pi_{q^*,\a}(k)=\pi_{\g^*,\a}(\pi_{q^*,\g^*}(k))=\pi_{\gamma,\a}(\pi_{\g^*,\gamma}(\pi_{q^*,\g^*}(k)))=\pi_{\gamma,\a}(\pi_{q^*,\gamma}(k))$ as required.
Finally assume that $\alpha = \beta$ and $\gamma \neq \beta$.
Then $\gamma \in {X}_{{q}^{\ast}}$ and $\beta \leq \gamma$.
By minimality of ${\gamma}^{\ast}$, $\g^*\le \g$.
As before, there exists ${k}_{0} \in \omega$ such that for each $k \in \sset({c}_{{q}^{\ast}})\lp {k}_{0} \rp$ the following hold: $\pi_{q^*,\alpha}(k)=\pi_{\g^*,\alpha}(\pi_{q^*,\g^*}(k))$, ${\pi}_{\gamma, \alpha}({\pi}_{{q}^{\ast}, \gamma}(k)) = {\pi}_{{\gamma}^{\ast}, \alpha}({\pi}_{\gamma, {\gamma}^{\ast}}({\pi}_{{q}^{\ast}, \gamma}(k)))$, and ${\pi}_{{q}^{\ast}, {\gamma}^{\ast}}(k) = {\pi}_{\gamma, {\gamma}^{\ast}}({\pi}_{{q}^{\ast}, \gamma}(k))$.
Thus for every $k \in \sset({c}_{{q}^{\ast}})\lp {k}_{0} \rp$ we have $\pi_{q^*,\alpha}(k)=\pi_{{\gamma}^{\ast},\alpha}(\pi_{q^*,\g^*}(k))=\pi_{\g^*,\alpha}(\pi_{\g,\g^*}(\pi_{q^*,\g}(k)))=\pi_{\g,\alpha}(\pi_{q^*,\g}(k))$ as required.
So (\ref{i17}) holds.
Finally, we check (\ref{i15}). If $\a\in X_{q^*}$ then (\ref{i15}) is true because $q^*\in \Q^{\d}$.
Let us now define ${b}_{{q}^{\ast}, \beta}$ and ${\psi}_{{q}^{\ast}, \beta}$.
Put ${b}_{{q}^{\ast}, \beta} = {c}_{{q}^{\ast}}$.
For each ${m}^{\ast} \geq m$, ${\pi}_{{q}^{\ast}, {\gamma}^{\ast}}$ is constant on ${c}_{{q}^{\ast}}({m}^{\ast})$ because ${c}_{{q}^{\ast}} \; {\leq}_{m} \; {b}_{{q}^{\ast}, {\gamma}^{\ast}}$.
So for each ${m}^{\ast} \geq m$, ${\pi}_{{q}^{\ast}, \beta}$ is constant on ${c}_{{q}^{\ast}}({m}^{\ast})$.
Also ${\pi}_{{\gamma}^{\ast}, \beta} \circ {\pi}_{{q}^{\ast}, {\gamma}^{\ast}}$ is increasing on $\sset({c}_{{q}^{\ast}})\lp m \rp$.
So for each ${m}^{\ast} \geq m$ define ${\psi}_{{q}^{\ast}, \beta}({m}^{\ast}) = {\pi}_{{\gamma}^{\ast}, \beta}({\pi}_{{q}^{\ast}, {\gamma}^{\ast}}(k)) = {\pi}_{{q}^{\ast}, \beta}(k)$, for an arbitrary $k \in {c}_{{q}^{\ast}}({m}^{\ast})$.
When ${m}^{\ast} < m$, ${\pi}_{{q}^{\ast}, \beta}$ is constantly equal to $0$ on ${c}_{{q}^{\ast}}({m}^{\ast})$.
So set ${\psi}_{{q}^{\ast}, \beta}({m}^{\ast}) = 0$, for ${m}^{\ast} < m$.
It is clear that $\langle {\pi}_{{q}^{\ast}, \beta}, {\psi}_{{q}^{\ast}, \beta} , {b}_{{q}^{\ast}, \beta} \rangle$ is a normal triple with ${c}_{{q}^{\ast}} \leq {b}_{{q}^{\ast}, \beta}$.

\vskip2mm\noindent
Case II: $\g_{q^*}<\d$.
Note that ${\gamma}_{{q}^{\ast}} \in {X}_{{q}^{\ast}}$
There are two subcases: when $\b<\g_{q^*}$ and when $\g_{q^*}<\b$.
If $\b<\g_{q^*}$, then define $\pi_{{q^*},\b}$ as follows.
Let $m_1$ be such that the following two things hold: $c_{q^*} {\leq}_{{m}_{1}} b_{q^*,\g_{q^*}}$; and for any $k, l \in \sset({c}_{q^*}) \lp m_1 \rp$, if $k \leq l$, then ${\pi}_{{\gamma}_{{q}^{\ast}}, \beta}({\pi}_{{q}^{\ast}, {\gamma}_{{q}^{\ast}}}(k)) \leq {\pi}_{{\gamma}_{{q}^{\ast}}, \beta}({\pi}_{{q}^{\ast}, {\gamma}_{{q}^{\ast}}}(l))$.
For $k\in \sset(c_{q^*})\lp m_1\rp$, define $\pi_{q^*,\b}(k) = \pi_{\g_{q^*},\b}(\pi_{q^*,\g_{q^*}}(k))$, while for $k \notin \sset(c_{q^*})\lp m_1\rp$, define $\pi_{q^*,\b}(k)=0$.
We will prove that $q'=\seq{c_{q^*},\g_{q^*},X_{q^*}\cup\set{\b},\seq{\pi_{{q^*},\a}:\a\in X_{q^*}\cup\set{\b}}}$ is as required. It is enough to show that $q'\in \Q^{\d}$, because then we will have that $q'\le {q^*}$ and $\b\in X_{q'}$. Clearly, conditions (\ref{i11}-\ref{i13}) are satisfied. To see that (\ref{i14}) is true, note that (\ref{i16}) is clear from the definition of ${\pi}_{{q}^{\ast}, \beta}$ and from that fact that ${c}_{q'} = {c}_{{q}^{\ast}}$.  
Next, we check (\ref{i17}) for $q'$.
Fix $\a,\g\in X_{q'}$ such that $\a\le \g$.
There are again four cases: either ($\b \neq \a$ and $\b \neq \g$), or ($\alpha = \beta = \gamma$), or ($\gamma = \beta$ and $\alpha \neq \beta$), or ($\gamma \neq \beta$ and $\alpha = \beta$).
If $\b\neq\a$ and $\b\neq\g$, then the statement follows directly from Definition \ref{d14}(\ref{i17}) applied to ${q^*}$.
The case when $\alpha  = \beta = \gamma$ is trivial.
Next, consider the case when $\gamma = \beta$ and $\alpha \neq \beta$.
Then $\alpha \in {X}_{{q}^{\ast}}$.
There exists ${k}_{2}$ such that for each $k \in \sset({c}_{{q}^{\ast}})\lp {k}_{2} \rp$ the following hold: ${\pi}_{{q}^{\ast}, \gamma}(k) = {\pi}_{{\gamma}_{{q}^{\ast}}, \gamma}({\pi}_{{q}^{\ast}, {\gamma}_{{q}^{\ast}}}(k))$, ${\pi}_{{\gamma}_{{q}^{\ast}}, \alpha}({\pi}_{{q}^{\ast}, {\gamma}_{{q}^{\ast}}}(k)) = {\pi}_{\gamma, \alpha}({\pi}_{{\gamma}_{{q}^{\ast}}, \gamma}({\pi}_{{q}^{\ast}, {\gamma}_{{q}^{\ast}}}(k)))$, and ${\pi}_{{q}^{\ast}, \alpha}(k) = {\pi}_{{\gamma}_{{q}^{\ast}}, \alpha}({\pi}_{{q}^{\ast}, {\gamma}_{{q}^{\ast}}}(k))$.
Thus for any $k \in \sset({c}_{{q}^{\ast}})\lp {k}_{2} \rp$, ${\pi}_{\gamma, \alpha}({\pi}_{{q}^{\ast}, \gamma}(k)) = {\pi}_{\gamma, \alpha}({\pi}_{{\gamma}_{{q}^{\ast}}, \gamma}({\pi}_{{q}^{\ast}, {\gamma}_{{q}^{\ast}}}(k))) = {\pi}_{{\gamma}_{{q}^{\ast}}, \alpha}({\pi}_{{q}^{\ast}, {\gamma}_{{q}^{\ast}}}(k)) = {\pi}_{{q}^{\ast}, \alpha}(k)$, as needed.
Finally suppose that $\gamma \neq \beta$ and $\alpha = \beta$.
Then $\gamma \in {X}_{{q}^{\ast}}$.
As before, there exists ${k}_{3}$ such that for each $k \in \sset({c}_{{q}^{\ast}})\lp {k}_{3} \rp$ the following hold:
${\pi}_{{q}^{\ast}, \alpha}(k) = {\pi}_{{\gamma}_{{q}^{\ast}}, \alpha}({\pi}_{{q}^{\ast}, {\gamma}_{{q}^{\ast}}}(k))$, ${\pi}_{{\gamma}_{{q}^{\ast}}, \alpha}({\pi}_{{q}^{\ast}, {\gamma}_{{q}^{\ast}}}(k)) = {\pi}_{\gamma, \alpha}({\pi}_{{\gamma}_{{q}^{\ast}}, \gamma}({\pi}_{{q}^{\ast}, {\gamma}_{{q}^{\ast}}}(k)))$, and ${\pi}_{{q}^{\ast}, \gamma}(k) = {\pi}_{{\gamma}_{{q}^{\ast}}, \gamma}({\pi}_{{q}^{\ast}, {\gamma}_{{q}^{\ast}}}(k))$.
Thus for $k \in \sset({c}_{{q}^{\ast}})\lp {k}_{3} \rp$, ${\pi}_{{q}^{\ast}, \alpha}(k) = {\pi}_{{\gamma}_{{q}^{\ast}}, \alpha}({\pi}_{{q}^{\ast}, {\gamma}_{{q}^{\ast}}}(k)) = {\pi}_{\gamma, \alpha}({\pi}_{{\gamma}_{{q}^{\ast}}, \gamma}({\pi}_{{q}^{\ast}, {\gamma}_{{q}^{\ast}}}(k))) = {\pi}_{\gamma, \alpha}({\pi}_{{q}^{\ast}, \gamma}(k))$, as required.
So (\ref{i17}) is checked, and we now check (\ref{i15}) for $q'$.
If $\alpha \in {X}_{{q}^{\ast}}$, then (\ref{i15}) is satisfied for $q'$ because it was satisfied for ${q}^{\ast}$.
It remains to define ${b}_{{q}^{\ast}, \beta}$ and ${\psi}_{{q}^{\ast}, \beta}$.
Put ${b}_{{q}^{\ast}, \beta} = {c}_{{q}^{\ast}}$.
Note that for each ${m}^{\ast} \geq {m}_{1}$, ${\pi}_{{q}^{\ast}, {\gamma}_{{q}^{\ast}}}$ is constant on ${c}_{{q}^{\ast}}({m}^{\ast})$ because ${c}_{{q}^{\ast}} \; {\leq}_{{m}_{1}} \; {b}_{{q}^{\ast}, {\gamma}_{{q}^{\ast}}}$.
So for each ${m}^{\ast} \geq {m}_{1}$, ${\pi}_{{q}^{\ast}, \beta}$ is constant on ${c}_{{q}^{\ast}}({m}^{\ast})$.
Also $ {\pi}_{{\gamma}_{{q}^{\ast}}, \beta} \circ {\pi}_{{q}^{\ast}, {\gamma}_{{q}^{\ast}}}$ is increasing on $\sset({c}_{{q}^{\ast}})\lp {m}_{1} \rp$.
So for ${m}^{\ast} \geq {m}_{1}$, define ${\psi}_{{q}^{\ast}, \beta}({m}^{\ast}) = {\pi}_{{\gamma}_{{q}^{\ast}}, \beta}({\pi}_{{q}^{\ast}, {\gamma}_{{q}^{\ast}}}(k)) = {\pi}_{{q}^{\ast}, \beta}(k)$, for an arbitrary $k \in {c}_{{q}^{\ast}}({m}^{\ast})$.
When ${m}^{\ast} < {m}_{1}$, ${\pi}_{{q}^{\ast}, \beta}$ is constantly equal to $0$ on ${c}_{{q}^{\ast}}({m}^{\ast})$.
So define ${\psi}_{{q}^{\ast}, \beta}({m}^{\ast}) = 0$, for ${m}^{\ast} < {m}_{1}$.
It is clear that $\langle {\pi}_{{q}^{\ast}, \beta}, {\psi}_{{q}^{\ast}, \beta}, {b}_{{q}^{\ast}, \beta} \rangle$ is a normal triple and that ${c}_{{q}^{\ast}} \leq {b}_{{q}^{\ast}, \beta}$.
Hence $q'$ is as required.

Now consider the case when $\b>\g_{q^*}$. For each $\a\in X_{q^*}$, since $\a\le\g_{q^*}<\b$, by Definition \ref{d22}(\ref{i0}) pick $a_{\a}\in \cu_{\b}$ so that $\forall k\in a_{\a}\ [\pi_{\b,\a}(k)=\pi_{\g_{q^*},\a}(\pi_{\b,\g_{q^*}}(k))]$. Since $X_{q^*}$ is countable and $\cu_{\b}$ is a P-point there is $a\in \cu_{\b}$ such that $a\subset^*a_{\a}$ for every $\a\in X_{q^*}$. Then we apply Definition \ref{d22}(\ref{i1}) with $\b$, $\a$ being $\g_{q^*}$, $c_{q^*}$ being $d$, $\pi_1$ being $\pi_{{q^*},\g_{q^*}}$, $b_1$ being $b_{{q^*},\g_{q^*}}$ and $\psi_1$ being $\psi_{{q^*},\g_{q^*}}$ and $a$. Note that hypothesis of Definition \ref{d22}(\ref{i1}) are satisfied. By Definition \ref{d22}(\ref{i1}) there are $b\in \cu_{\b}$, $\pi,\psi\in\o^{\o}$ and $d^*\le_0 c_{q^*}$ so that $b\subset^*a$, $\seq{\pi,\psi,d^*}$ is a normal triple, $\pi''\sset(d^*)=b$ and $\forall k\in \sset(d^*)\ [\pi_{{q^*},\g_{q^*}}(k)=\pi_{\b,\g_{q^*}}(\pi(k))]$. Denote $\pi_{{q^*},\b}=\pi$ and $\psi_{{q^*},\b}=\psi$. Now define $q'=\seq{d^*,\b,X_{q^*}\cup\set{\b},\seq{\pi_{{q^*},\a}:\a\in X_{q^*}\cup\set{\b}}}$. It is easy to see that if we prove that $q'\in \Q^{\d}$, then $q'\le {q^*}$ and $\b\in X_{q'}$ follow. So we check conditions of Definition \ref{d14}. Note that conditions (\ref{i11}-\ref{i13}) are clearly true. We still have to check Definition \ref{d14}(\ref{i14}). First note that (\ref{i15}) is satisfied for $\a\in X_{q^*}$ because $d^*\le c_{q^*}$, while it is true for $\b$ because $\seq{\pi,\psi,d^*}$ is a normal triple. To see that (\ref{i17}) is true let $\a,\g\in X_{q^*}\cup\set{\b}$ be such that $\a\le \g$. There are three cases: either $\a\neq\b$ and $\g\neq\b$ or $\a=\b$ or $\g=\b$. First note that if $\a=\b$, then it must also be $\g=\b$ and the statement holds. If $\a\neq\b$ and $\g\neq\b$ then by Definition \ref{d14} and because $\sset(d^*)\subset^*\sset(c_{q^*})$ we have $\forall^{\infty}k\in \sset(d^*)\ [\pi_{q^*,\a}(k)=\pi_{\g,\a}(\pi_{q^*,\g}(k))]$. If $\g=\b$ then because $b\subset^* a\subset^*a_{\a}$ and $\pi''\sset(d^*)=b$ we have that there is $k_0<\o$ such that for every $k\in \sset(d^*)\lp k_0\rp$ we have $\pi_{q^*,\a}(k)=\pi_{\g_{q^*},\a}(\pi_{q^*,\g_{q^*}}(k))=\pi_{\g_{q^*},\a}(\pi_{\b,\g_{q^*}}(\pi(k)))=\pi_{\b,\a}(\pi(k))=\pi_{\b,\a}(\pi_{q^*,\b}(k))$ as required. To see that (\ref{i16}) is true note that $\pi_{{q^*},\b}''\sset(d^*)=b\in \cu_{\b}$ and consequently $\pi_{\b,\a}''\br{\pi_{q^*,\b}''\sset(d^*)}\in \cu_{\b}$ for any $\a\in X_{q^*}$. Together with already proved (\ref{i17}), this implies $\pi_{\b,\a}''\br{\pi_{q^*,\b}''\sset(d^*)}\subset^*\pi_{{q^*},\a}''\sset(d^*)\in \cu_{\b}$ for $\a\in X_{q^*}$.
\end{proof}
The next lemma ensures that we can ``kill'' unwanted Tukey maps.
That is, if $\beta < \delta$ and $\phi: \cp(\omega) \rightarrow \cp(\omega)$ is a monotone map that is a potential witness for the unwanted Tukey reduction ${\cu}_{\delta} \; {\leq}_{T} \; {\cu}_{\beta}$, then we would like every condition in ${\Q}^{\delta}$ to have an extension forcing that $\phi$ is not such a witness.
\begin{lemma}\label{t4}
For any $q\in \Q^{\d}$, any $\b<\d$ and any monotone $\phi:\cp(\o)\to \cp(\o)$, if for every $A\in \cu_{\b}$, $\phi(A)\neq\ps$ then there is $q'\le q$ such that $\b\in X_{q'}$ and that for every $A\in\cu_{\b}$ we have $\phi(A) \not\subset \sset(c_{q'})$.
\end{lemma}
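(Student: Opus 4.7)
The plan is first to invoke Lemma \ref{t2} to obtain some $q^{\ast} \leq q$ with $\beta \in X_{q^{\ast}}$, so that $\pi_{q^{\ast}, \beta}$ is already in hand. Next I would consider the collection $\cv$ on $\omega$ consisting of all $B \subset \omega$ for which some $A \in \cu_{\beta}$ satisfies $\phi(A) \subset B$. By monotonicity of $\phi$, together with the fact that the intersection of two members of $\cu_{\beta}$ is again in $\cu_{\beta}$, the family $\{\phi(A): A \in \cu_{\beta}\}$ is a filter base, and by hypothesis each $\phi(A) \neq \emptyset$, so $\cv$ is a proper filter on $\omega$. The lemma thus reduces to producing $q' \leq q^{\ast}$ with $\sset(c_{q'}) \notin \cv$, because this immediately forces $\phi(A) \not\subset \sset(c_{q'})$ for every $A \in \cu_{\beta}$.

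The argument then splits on whether $\cv$ is an ultrafilter. In the non-ultrafilter case I would fix $X \subset \omega$ with both $X \notin \cv$ and $\omega \setminus X \notin \cv$ and apply Lemma \ref{t3} to $q^{\ast}$ and $X$. The resulting $q' \leq q^{\ast}$ has $X_{q'} = X_{q^{\ast}}$ and $\sset(c_{q'})$ contained either in $X$ or in $\omega \setminus X$; in either case $\sset(c_{q'})$ lies inside a set outside $\cv$, hence $\sset(c_{q'}) \notin \cv$ as required.

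In the ultrafilter case I would exploit the block structure of $\P$. For any partition $\omega = S_{0} \sqcup S_{1}$ of the index set into two disjoint infinite pieces, the unions $Y_{i} := \bigcup_{n \in S_{i}} c_{q^{\ast}}(n)$ are disjoint subsets of $\sset(c_{q^{\ast}})$, so at most one of them belongs to the ultrafilter $\cv$; picking $i$ with $Y_{i} \notin \cv$ and setting $c_{q'}(k) := c_{q^{\ast}}(S_{i}(k))$ places $c_{q'} \leq c_{q^{\ast}}$ in $\P$ with $\sset(c_{q'}) = Y_{i} \notin \cv$. The principal obstacle is arranging simultaneously that $\pi_{q^{\ast}, \alpha}''\sset(c_{q'}) \in \cu_{\alpha}$ for every $\alpha \in X_{q^{\ast}}$ so that $q'$ is a genuine condition in $\Q^{\delta}$. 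Since each $\cu_{\alpha}$ is an ultrafilter and $\pi_{q^{\ast}, \alpha}''Y_{0} \cup \pi_{q^{\ast}, \alpha}''Y_{1} = \pi_{q^{\ast}, \alpha}''\sset(c_{q^{\ast}}) \in \cu_{\alpha}$, at least one of these two $\pi$-images already lies in $\cu_{\alpha}$; by iteratively refining the partition while enumerating the countable set $X_{q^{\ast}}$, splitting the current non-$\cv$ side disjointly at each stage to absorb the $\cu_{\alpha}$-constraint posed by the next $\alpha$ and finally using the P-point property of each $\cu_{\alpha}$ to pseudo-intersect the constraints, one can coordinate all these countably many image requirements with the single requirement $\sset(c_{q'}) \notin \cv$.
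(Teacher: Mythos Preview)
Your reduction to finding $q' \le q^{\ast}$ with $\sset(c_{q'}) \notin \cv$ is sound, and your Case~1 is a clean argument. The gap is in Case~2. When you partition the \emph{index set} and set $Y_i = \bigcup_{n\in S_i} c_{q^{\ast}}(n)$, you correctly observe that some $Y_i \notin \cv$ and that for each $\alpha$ some $\pi_{q^{\ast},\alpha}''Y_j \in \cu_\alpha$; but nothing forces the same side to work for both requirements. Your iterative plan then says to keep the non-$\cv$ side and split it further to absorb the next $\cu_\alpha$-constraint, yet at the very first step you have no guarantee that $\pi_{q^{\ast},\alpha_0}''Y_i \in \cu_{\alpha_0}$ for the side $i$ with $Y_i\notin\cv$, so there is nothing to split. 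The P-point pseudo-intersection at the end cannot rescue this: the problem is a single $\alpha$ may already conflict with the $\cv$-requirement, not the accumulation of countably many.

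The paper sidesteps this entirely by splitting \emph{within} blocks rather than across them. After applying Lemma~\ref{t5} with $f(n)=2n+1$ one gets $q''\le q^{\ast}$ with $|c_{q''}(n)|\ge 2n+2$, so each block can be divided into two disjoint pieces $d_1(n),d_2(n)\in[c_{q''}(n)]^{n+1}$. The key point is that for every $\alpha\in X_{q''}$, the normal-triple condition makes $\pi_{q'',\alpha}$ eventually constant on each block $c_{q''}(n)$, hence $\pi_{q'',\alpha}''\sset(d_1) =^{\ast} \pi_{q'',\alpha}''\sset(c_{q''}) =^{\ast} \pi_{q'',\alpha}''\sset(d_2)$. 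Thus \emph{both} $q_1$ and $q_2$ are automatically conditions in $\Q^{\d}$, with $\sset(c_{q_1})\cap\sset(c_{q_2})=\emptyset$. Since $\cv$ is a proper filter, at most one of these disjoint sets lies in $\cv$, so one of $q_1,q_2$ works---no case split on whether $\cv$ is an ultrafilter, and no coordination problem.
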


\begin{proof}
By Lemma \ref{t2} there is $q'\le q$ such that $\b\in X_{q'}$ and by Lemma \ref{t5} there is $q''\le q'\le q$ such that for every $n<\o$ there is $m\ge 2n+1$ such that $c_{q''}(n)={c}_{q'}(m)$. For every $n<\o$ choose sets $d_1(n)$ and $d_2(n)$ which are elements of $[c_{q''}(n)]^{n+1}$ and are such that $d_1(n)\cap d_2(n)=0$. This can be done because $\abs{c_{q''}(n)}\ge 2n+2$. Note that both $q_1=\seq{d_1,\g_{q''},X_{q''},\seq{\pi_{q'',\a}:\a\in X_{q''}}}$ and $q_2=\seq{d_2,\g_{q''},X_{q''},\seq{\pi_{q'',\a}:\a\in X_{q''}}}$ belong to $\Q^{\d}$ and that $q_1,q_2\le q''\le q$. Now we consider two cases: either for every $A\in\cu_{\b}$, $\phi(A) \not\subset \sset(d_1)$, or there is some $A\in \cu_{\b}$ such that $\phi(A)\subset \sset(d_1)$. If for every $A\in\cu_{\b}$, $\phi(A) \not\subset \sset(d_1)$, then $q_1$ is as required. Otherwise, $q_2$ is as required because $\sset(d_1)\cap \sset(d_2)=0$ and $\phi$ is monotone.
\end{proof}
Note that $q'$ forces what we want because it forces $\sset({c}_{q'}) \in {\cu}_{\delta}$.
Hence it forces that the image of ${\cu}_{\beta}$ under $\phi$ is not cofinal in ${\cu}_{\delta}$.
It is also worth noting that the descriptive complexity of $\phi$ plays no role in the proof of Lemma \ref{t4}.
So Theorem \ref{thm:dt} is only needed for bounding the number of relevant maps.

The next lemma is needed for ensuring clause (\ref{i8}) of Definition \ref{d22}, and hence it is only relevant when $\cf(\delta) = \omega$.
It follows by a direct application of Lemma \ref{t30}.
\begin{lemma}\label{t9}
Suppose that $\cf(\d)=\o$, $q\in \Q^{\d}$ is such that $\g_q=\d$, $\seq{d_j:j<\o}$ is a decreasing sequence in $\P$, $X\subset X_q$ is such that $\sup(X)=\d $ and that $\seq{\pi_{\a}:\a\in X}$ is a sequence of maps in $\o^{\o}$ satisfying:
\begin{enumerate}
\item\label{i2001} $\forall \a\in X\ \forall j<\o\ [\pi_{\a}''\sset(d_j)\in \mathcal U_{\a}]$;
\item\label{i2002} $\forall \a,\b\in X\ [\a\le\b\sledi\exists j<\o\ \forall^{\infty} k\in \sset(d_j)\ [\pi_{\a}(k)=\pi_{\b,\a}(\pi_{\b}(k))]]$;
\item\label{i2003} for all $\a\in X$ there are $j<\o$ and $\psi_{\a}\in\o^{\o}$ and $b_{\a}\in \P$ such that $\seq{\pi_{\a},\psi_{\a},b_{\a}}$ is a normal triple and $d_j\le b_{\a}$.
\end{enumerate}
Then there are $q'\le q$, $d^*\in \P$ and $\pi:\o\to\o$ such that:
\begin{enumerate}[resume]
\item\label{i2005} $\forall j<\o\ [d^*\le d_j]$ and $\sset(c_{q'})=\pi''\sset(d^*)$;
\item\label{i2006} $\forall \a\in X\ \forall^{\infty}k\in \sset(d^*)\ [\pi_{\a}(k)=\pi_{q',\a}(\pi(k))]$;
\item\label{i2008} there is $\psi$ for which $\seq{\pi,\psi,d^*}$ is a normal triple.
\end{enumerate}
\end{lemma}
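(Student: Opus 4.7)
The plan is to deduce Lemma \ref{t9} by a single application of Lemma \ref{t30}. I will invoke that lemma taking $e$ to be $c_q$, taking the maps $\pi_{\d,\a}$ appearing in Lemma \ref{t30} to be the maps $\pi_{q,\a}$ for $\a\in X$, taking $f=\id$, and taking the given sequence $\seq{d_j:j<\o}$ and maps $\seq{\pi_{\a}:\a\in X}$ in their natural roles. Hypothesis (\ref{i71}) of Lemma \ref{t30} is the standing assumption on $S$ fixed at the start of Section \ref{sec:top}. Hypotheses (\ref{i73}), (\ref{i74}) and (\ref{i75}) follow respectively from (\ref{i16}), (\ref{i17}) and (\ref{i15}) of Definition \ref{d14} applied to $q$, using that $X\subset X_q$. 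Hypothesis (\ref{i76}) is verbatim conditions (\ref{i2001})--(\ref{i2003}) of the present lemma. The conclusion of Lemma \ref{t30} then supplies $d^*,e^*\in\P$ and $\pi\in\o^{\o}$ satisfying (\ref{i84})--(\ref{i83}).

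I then set $c_{q'}=e^*$ and define $q'=\seq{e^*,\d,X_q,\seq{\pi_{q,\a}:\a\in X_q}}$. Conclusion (\ref{i84}) specialised to $f=\id$ unpacks to $e^*\le_0 c_q$, so $c_{q'}\le c_q$. By Remark \ref{r101} the only remaining check for $q'\in\Q^{\d}$ (which then also yields $q'\le q$) is Definition \ref{d14}(\ref{i16}), namely $\pi_{q,\a}''\sset(e^*)\in\cu_{\a}$ for every $\a\in X_q$. Conclusion (\ref{i82}) of Lemma \ref{t30} gives this for $\a\in X$. The only mildly non-trivial step I foresee is the extension to $\a\in X_q\setminus X$: since $\sup(X)=\d$, I pick $\b\in X$ with $\a\le\b$; then Definition \ref{d14}(\ref{i17}) applied to $q$, together with $\sset(e^*)\subset^*\sset(c_q)$, yields $\pi_{q,\a}''\sset(e^*)\supset^*\pi_{\b,\a}''(\pi_{q,\b}''\sset(e^*))$, and the right-hand side lies in $\cu_{\a}$ by the $\b\in X$ case of (\ref{i82}) combined with Definition \ref{d22}(\ref{i5}).

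Finally, the three desired conclusions of Lemma \ref{t9} follow immediately. Clause (\ref{i2005}) is (\ref{i80}) together with $\sset(c_{q'})=\sset(e^*)$; clause (\ref{i2006}) is precisely (\ref{i81}) once one identifies $\pi_{\d,\a}$ with $\pi_{q,\a}=\pi_{q',\a}$ for $\a\in X$; and clause (\ref{i2008}) is (\ref{i83}) verbatim. I do not expect any genuine obstacle: the entire argument reduces to matching the data of the two lemmas under the appropriate substitution, the one small extra point being that the commuting clause (\ref{i17}) built into the condition $q$ automatically propagates the ultrafilter membership from $X$ to all of $X_q$.
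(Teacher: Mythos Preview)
Your proposal is correct and follows essentially the same approach as the paper's own proof: both apply Lemma \ref{t30} with $e=c_q$, $\pi_{\d,\a}=\pi_{q,\a}$, and $f=\id$, set $q'=\seq{e^*,\d,X_q,\seq{\pi_{q,\a}:\a\in X_q}}$, and use Remark \ref{r101} to reduce the verification that $q'\in\Q^{\d}$ to Definition \ref{d14}(\ref{i16}), handling $\a\in X_q\setminus X$ by passing through a larger $\b\in X$ exactly as you describe.
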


\begin{proof}
We will use Lemma \ref{t30} where: $\d$, $X$, $\seq{d_j:j<\o}$ and $\pi_{\a}$ ($\a\in X$) are as in the statement of this lemma, $f=\id$; $e$ is $c_q$; for $\a\in X$ map $\pi_{\d,\a}$ is $\pi_{q,\a}$ ($\a\in X$). So there are $e^*$, $d^*$ and $\pi$ satisfying properties (\ref{i84}-\ref{i83}) of the conclusion of Lemma \ref{t30}. We will show that $q'=\seq{e^*,\d,X_q,\seq{\pi_{q,\a}:\a\in X_q}}$, $d^*$ and $\pi$ are as required. The conditions (\ref{i2005}-\ref{i2008}) will be witnessed by conditions (\ref{i80}-\ref{i83}) in the conclusion of the Lemma \ref{t30}. By Remark \ref{r101}, in order to finish the proof we only have to show that Definition \ref{d14}(\ref{i16}) holds for $q'$. First assume that $\a\in X$. Then by Lemma \ref{t30}(\ref{i82}), $\pi_{q,\a}''\sset(e^*)\in \cu_{\a}$. Now assume that $\a\in X_q\setminus X$. Let $\a'\in X$ be such that $\a'>\a$. Then $\pi_{\a',\a}''\br{\pi_{q,\a'}''\sset(e^*)}\in \cu_{\a}$. Also we have $\pi_{\a',\a}''\br{\pi_{q,\a'}''\sset(e^*)} \subset^* \pi_{q,\a}''\sset(e^*)$. These observations together give us $\pi_{q,\a}''\sset(e^*)\in \cu_{\a}$ as required.
\end{proof}
Next we show how to make sure that ${\cu}_{\delta}$ is rapid.
This lemma follows from a direct application of Lemma \ref{t5}.
\begin{lemma}\label{t11}
Suppose that $\d<\o_2$, that $q\in \Q^{\d}$ and that $f\in\o^{\o}$ is a strictly increasing function. There is $q'\le q$ such that for every $n<\o$ we have $\sset(c_{q'})(n)\ge f(n)$.
\end{lemma}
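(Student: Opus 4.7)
The plan is to reduce this to Lemma \ref{t5} by choosing a sufficiently fast-growing function to play the role of its $f$.

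Define $g \in \omega^{\omega}$ by $g(k) = f(s(k+1))$, where $s(m) = m(m+1)/2$ as in Remark \ref{r1}. Since $s$ is strictly increasing and $f$ is strictly increasing, so is $g$. Applying Lemma \ref{t5} to $q$ and $g$ produces $q^{*} \le q$ such that for each $k < \omega$, $c_{q^{*}}(k) = c_{q}(m_{k})$ for some $m_{k} \ge g(k)$. The ``moreover'' part of Lemma \ref{t5} then yields $q' \le q^{*}$ satisfying $c_{q'}(k) \in [c_{q^{*}}(k)]^{k+1}$ for every $k < \omega$.

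I claim $q'$ is as required. The blocks of $c_{q'}$ are pairwise disjoint and strictly ordered (by Definition \ref{d13}) and have sizes $|c_{q'}(k)| = k+1$. Hence the cumulative sums $s(k) = k(k+1)/2$ partition $\omega$, and for each $n < \omega$ the element $\sset(c_{q'})(n)$ belongs to the unique block $c_{q'}(k)$ with $s(k) \le n < s(k+1)$. Therefore
\[
\sset(c_{q'})(n) \ge \min(c_{q'}(k)) \ge \min(c_{q}(m_{k})) \ge m_{k} \ge g(k) = f(s(k+1)) \ge f(n),
\]
where the third inequality uses Remark \ref{r5}(\ref{i201}) and the last inequality uses that $s(k+1) > n$ together with the monotonicity of $f$.

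This lemma is essentially a bookkeeping consequence of Lemma \ref{t5}, so I do not anticipate any genuine obstacle. The only point to be careful about is the calibration between the polynomial rate $s$ at which the block sizes accumulate and the arbitrary growth of $f$, and this is handled by folding $s$ into the definition of $g$ before invoking Lemma \ref{t5}.
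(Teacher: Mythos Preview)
Your proof is correct and follows essentially the same route as the paper: both apply Lemma \ref{t5} with the auxiliary function $n \mapsto f(s(n+1))$, use the ``moreover'' part to make $\left|c_{q'}(k)\right| = k+1$, locate $\sset(c_{q'})(n)$ in the block $c_{q'}(k)$ with $s(k) \le n < s(k+1)$, and finish via Remark \ref{r5}(\ref{i201}) and the monotonicity of $f$. The only cosmetic difference is that the paper invokes Lemma \ref{t5} in one step rather than two, but the argument is the same.
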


\begin{proof}
According to Lemma \ref{t5} there is $q'\le q$ so that for every $n<\o$ there is $m\ge f(s(n+1))$ such that $c_{q'}(n)\in [c_q(m)]^{n+1}$. We will prove that $q'$ is as required. So fix $n<\o$, and let $k<\o$ be such that $\sset(c_{q'})(n)\in c_{q'}(k)$. Equivalently $s(k)\le n<s(k+1)$ which implies $\sset(c_{q'})(n)\ge \sset(c_{q'})(s(k))$. Since for some $m\ge f(s(k+1))$ we have $c_{q'}(k)\subset c_q(m)$ and $\sset(c_{q'})(n)\in c_{q'}(k)$, then by Remark \ref{r5}(\ref{i201}), $\sset(c_{q'})(n)\ge m\ge f(s(k+1))>f(n)$, the last inequality being true because $f$ is an increasing function. So we showed that $\sset(c_{q'})(n)\ge f(n)$ as required.
\end{proof}
We now come to the final density lemma.
This lemma ensures that clause (\ref{i1}) of Definition \ref{d22} can be satisfied during the construction of ${\cu}_{\delta}$.
One of the cases in its proof makes use of Lemma \ref{t30}.
\begin{lemma}\label{t12}
Let $q\in \Q^{\d}$, $\pi_1,\psi_1\in\o^{\o}$, ${b}_{1},d\in\P$, and $\a<\d$ be such that $\seq{\pi_1,\psi_1,{b}_{1}}$ is a normal triple, $d\le {b}_{1}$, and $\pi''_1\sset(d)\in\cu_{\a}$. Then there are $q^*\le q$, $d^*\le d$, $\pi,\psi\in\o^{\o}$ such that $\seq{\pi,\psi,d^*}$ is a normal triple, $\a\in X_{q^*}$, $\pi''\sset(d^*)=\sset(c_{q^*})$ and $\forall k\in \sset(d^*)\ [\pi_1(k)=\pi_{q^*,\a}(\pi(k))]$.
\end{lemma}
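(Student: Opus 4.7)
The plan is as follows. First invoke Lemma \ref{t2} to obtain $q_1 \leq q$ with $\alpha \in X_{q_1}$, and then invoke Lemma \ref{t10} with the set $A = \pi_1''\sset(d) \in \cu_{\alpha}$ to obtain $q_2 \leq q_1$ with $X_{q_2} = X_{q_1}$ and $\pi_{q_2,\alpha}''\sset(c_{q_2}) \subseteq A$. What remains is a combinatorial construction of $c_{q^*} \leq c_{q_2}$, $d^* \leq d$, and maps $\pi, \psi$ meeting the conclusion, after which I set $q^* = \seq{c_{q^*}, \gamma_{q_2}, X_{q_2}, \seq{\pi_{q_2,\beta}: \beta \in X_{q_2}}}$ and check $q^* \in \Q^{\delta}$ via Remark \ref{r101}.

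I would split on $\gamma_{q_1}$. In the simpler case $\gamma_{q_1} < \delta$, set $\gamma := \gamma_{q_1}$; then $\gamma \in X_{q_1}$ and dominates every element of $X_{q_1}$, so the whole construction can be pivoted through $\gamma$. A second use of Lemma \ref{t10} arranges $\pi_{q_2,\gamma}''\sset(c_{q_2}) \subseteq \pi_{\gamma,\alpha}^{-1}(A) \in \cu_{\gamma}$. Using rapidity of $\cu_{\gamma}$ (Lemma \ref{t33}), extract $Y \in \cu_{\gamma}$ with $Y \subseteq \pi_{q_2,\gamma}''\sset(c_{q_2})$, with consecutive elements in distinct $b_{\gamma,\alpha}$-blocks (so the derived values $w_n := \pi_{\gamma,\alpha}(Y(n)) \in A$ are strictly increasing), and with $Y(n) \geq g(n)$ for a threshold $g$ tuned to the function $t$ of Remark \ref{r1}. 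Two applications of Lemma \ref{t14}, first to $\seq{\pi_{q_2,\gamma}, \psi_{q_2,\gamma}, b_{q_2,\gamma}}$ with $c_{q_2}$ and then to $\seq{\pi_1, \psi_1, b_1}$ with $d$, yield strictly increasing block indices $\tilde k_n, m_n$ satisfying $\pi_{q_2,\gamma}'' c_{q_2}(\tilde k_n) = \set{Y(n)}$, $\pi_1'' d(m_n) = \set{w_n}$, and $m_n \geq t(n)$. Taking $c_{q^*}(n) \in [c_{q_2}(\tilde k_n)]^{n+1}$, partitioning an initial segment of $d(m_n)$ into disjoint blocks $d^*(r) \in [d(m_n)]^{r+1}$ for $r \in [s(n), s(n+1))$, and setting $\pi(k) = \sset(c_{q^*})(r)$ for $k \in d^*(r)$ and $\psi(r) = \sset(c_{q^*})(r)$ gives all the required objects. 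The factorization $\pi_1(k) = w_n = \pi_{q^*,\alpha}(\pi(k))$ on $\sset(d^*)$ holds by construction, and because every $\beta \in X_{q^*}$ satisfies $\beta \leq \gamma$, clause (\ref{i17}) of Definition \ref{d14} applied to $q_2$ gives $\pi_{q^*,\beta}''\sset(c_{q^*}) \supset^{*} \pi_{\gamma,\beta}''Y \in \cu_{\beta}$, securing clause (\ref{i16}).

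When $\gamma_{q_1} = \delta$, necessarily $\cf(\delta) = \omega$ and $X_{q_1}$ is cofinal in $\delta$, so no maximum ordinal is available to pivot through. Here I appeal to Lemma \ref{t30}. Fix a cofinal $\omega$-sequence $\seq{\delta_n: n<\omega} \subseteq X_{q_1}$ with $\delta_0 = \alpha$, play the role of the $\pi_{\delta,\delta_n}$'s of Lemma \ref{t30} with $\pi_{q_2,\delta_n}$ and of $e$ with $c_{q_2}$, so that (\ref{i73})--(\ref{i75}) of Lemma \ref{t30} are immediate from the definition of $\Q^{\delta}$. For (\ref{i76})--(\ref{i79}) build maps $\pi_{\delta_n}$ on a decreasing sequence $\seq{d_j : j<\omega}$ below $d$ by setting $\pi_{\delta_0} = \pi_1$ and iteratively applying clause (\ref{i1}) of Definition \ref{d22} to the pair $\alpha < \delta_n$, the normal triple $\seq{\pi_1, \psi_1, b_1}$, and the current refinement of $d$; crucially, after each step the image $\pi_1''\sset(d_j)$ stays in $\cu_{\alpha}$ because it equals $\pi_{\delta_n,\alpha}''b$ for the $b \in \cu_{\delta_n}$ supplied by (\ref{i1}), which keeps the hypothesis of (\ref{i1}) available for the next step, while the commutativity needed for (\ref{i78}) is inherited from clause (\ref{i0}) of Definition \ref{d22}. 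Lemma \ref{t30} then outputs $e^*, d^*, \pi$ directly meeting the conclusion with $c_{q^*} = e^*$.

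The main obstacle throughout is preserving $\pi_{q^*,\beta}''\sset(c_{q^*}) \in \cu_{\beta}$ for every $\beta \in X_{q^*}$ rather than only for the specific $\alpha$ handed in; this is precisely what forces the pivot through $\gamma_{q_1}$ in the first case and the invocation of Lemma \ref{t30} together with clause (\ref{i1}) of Definition \ref{d22} in the second, both of which exploit the commutativity built into the $\delta$-generic sequence $S$.
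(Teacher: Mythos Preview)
Your treatment of the case $\gamma_{q_1}<\delta$ is a genuine alternative to the paper's route. The paper first \emph{lifts} $\pi_1$ to a map $\pi_2$ landing in $\cu_{\gamma_{q_0}}$ via clause (\ref{i1}) of Definition \ref{d22} (this is Claim \ref{t201}) and only then thins using rapidity at level $\gamma_{q_0}$; you instead keep $\pi_1$ at level $\alpha$ and synchronize through $\pi_{q_2,\gamma}$. Your scheme can be completed, but two details are looser than they look. First, ``consecutive elements of $Y$ in distinct $b_{\gamma,\alpha}$-blocks'' only forces the $w_n$ to be non-decreasing, since $\psi_{\gamma,\alpha}$ in a normal triple is merely non-decreasing; you must thin so that the blocks have distinct $\psi_{\gamma,\alpha}$-values. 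Second, a raw bound $Y(n)\ge g(n)$ does not by itself force $m_n\ge t(n)$: what you actually need is that $w_n$ occurs in position $\ge t(n)$ inside $A'=\pi_1''\sset(d)\lp n_0\rp$, and this follows only after you invoke that $\pi_{\gamma,\alpha}$ is eventually increasing and unbounded on the relevant domain, so that $g$ can be chosen with $\pi_{\gamma,\alpha}(y)>A'(t(n))$ for all admissible $y\ge g(n)$. With these fixes your argument goes through.

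The cofinal case $\gamma_{q_1}=\delta$, however, has a genuine gap. You build each $\pi_{\delta_n}$ by applying clause (\ref{i1}) with the \emph{fixed} pair $\alpha<\delta_n$ and the \emph{fixed} triple $\seq{\pi_1,\psi_1,b_1}$, obtaining $\pi_1(k)=\pi_{\delta_n,\alpha}(\pi_{\delta_n}(k))$ on $\sset(d_n)$. This ties every $\pi_{\delta_n}$ to $\pi_1$, but it creates \emph{no} relation between $\pi_{\delta_n}$ and $\pi_{\delta_m}$ for $1\le n<m$, and clause (\ref{i0}) cannot supply one: (\ref{i0}) concerns only the ambient maps $\pi_{\cdot,\cdot}$, and combining it with your two factorizations yields merely
\[
\pi_{\delta_n,\alpha}\bigl(\pi_{\delta_n}(k)\bigr)=\pi_{\delta_n,\alpha}\bigl(\pi_{\delta_m,\delta_n}(\pi_{\delta_m}(k))\bigr),
\]
which says nothing about $\pi_{\delta_n}(k)=\pi_{\delta_m,\delta_n}(\pi_{\delta_m}(k))$ because $\pi_{\delta_n,\alpha}$ is far from injective. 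Thus hypothesis (\ref{i78}) of Lemma \ref{t30} fails for $1\le n<m$, and with it (\ref{i77}) for $j>n$: once $d_j$ is produced by a later, unrelated application of (\ref{i1}), there is no reason for $\pi_{\delta_n}''\sset(d_j)\in\cu_{\delta_n}$. The paper repairs this by \emph{chaining} the applications of (\ref{i1}): at stage $n+1$ it feeds in $\alpha=\alpha_n$, $\beta=\alpha_{n+1}$, and the \emph{previously built} triple $\seq{\pi_{\alpha_n},\psi_{\alpha_n},d_n}$, so that (\ref{i1}) outputs the commutation $\pi_{\alpha_n}(k)=\pi_{\alpha_{n+1},\alpha_n}(\pi_{\alpha_{n+1}}(k))$ on $\sset(d_{n+1})$ directly, securing (\ref{i78}) (and hence (\ref{i77})) inductively.
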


\begin{proof}
By Lemma \ref{t2} there is $q_0\le q$ such that $\a\in X_{q_0}$. In the same way as in the proof of Lemma \ref{t5} we have the following cases: either $\g_{q_0}=\d=0$ or $\g_{q_0}=\d$ and $\cf(\d)=\o$ or $\g_{q_0}<\d$.

\vskip2mm\noindent
Case I: $\g_{q_0}=\d$. As we mentioned above there are two subcases.

\vskip1mm\noindent
Subcase Ia: $\g_{q_0}=\d=0$. Then the statement is vacuous because there is no $\a<\d$.

\vskip1mm\noindent
Subcase Ib: $\g_{q_0}=\d$ and $\cf(\d)=\o$. In particular $\d$ is limit ordinal. There is $q_1\le q_0$ such that $X_{q_1}=X_{q_0}$ and that $q_1$ satisfies conclusion of Lemma \ref{t0}. Note $\sup(X_{q_1})=\d$. So pick an increasing sequence $\seq{\a_n:n<\o}$ such that $\a_0=\a$, $\sup\set{\a_n:n<\o}=\d$ and $\a_n\in X_{q_1}$ for $n<\o$. Build by induction sequences $\seq{d_n:n<\o}$ and $\seq{\pi_{\a_n}:n<\o}$ satisfying the following for each $n < \omega$: 
\begin{enumerate}
 \item \label{item:map1}
 $d_0=d$, $\pi_{\a_0}=\pi_1$, and $\forall m\le n\ [d_n\le d_m]$;
 \item \label{item:map2}
 $\pi_{\a_n}''\sset(d_n)\in \cu_{\a_n}$ and $\forall m\le n\ \forall^{\infty}k\in \sset(d_n)\ [\pi_{\a_m}(k)=\pi_{\a_n,\a_m}(\pi_{\a_n}(k))]$;
 \item \label{item:map3}
 if $n > 0$, then there is $\psi_{\a_n}\in \o^{\o}$ such that $\seq{\pi_{\a_n},\psi_{\a_n},d_n}$ is a normal triple. 
\end{enumerate}
Put ${d}_{0} = d$ and ${\pi}_{{\alpha}_{0}} = {\pi}_{1}$. 
Fix $n \in \omega$ and assume that $d_n$ and $\pi_{\a_n}$ are given satisfying (\ref{item:map1})--(\ref{item:map3}).
To get $d_{n+1}$ we apply Definition \ref{d22}(\ref{i1}) with $\a=\a_n$, $\b=\a_{n+1}$, $\pi_1=\pi_{\a_n}$, $d=d_n$, and if $n = 0$, then ${\psi}_{1} = {\psi}_{1}$ and ${b}_{1} = {b}_{1}$, while if $n > 0$, then ${\psi}_{1} = {\psi}_{{\alpha}_{n}}$ and ${b}_{1} = {d}_{n}$.
Note that in all cases the hypothesis of Definition \ref{d22}(\ref{i1}) is satisfied.
Let $a$ in Definition \ref{d22}(\ref{i1}) be $\pi_{q_1,\a_{n+1}}''\sset(c_{q_1})$.
Then there are $b\in \cu_{\a_{n+1}}$, $\psi_{\a_{n+1}}$, $\pi_{\a_{n+1}} \in \BS$ and $d_{n+1}\le d_n$ such that $b \; {\subset}^{\ast} \; \pi_{q_1,\a_{n+1}}''\sset(c_{q_1})$, $\pi_{\a_{n+1}}''\sset(d_{n+1})=b$, $\forall k\in \sset(d_{n+1})\ [\pi_{\a_n}(k)=\pi_{\a_{n+1},\a_n}(\pi_{\a_{n+1}}(k))]$ and that $\seq{\pi_{\a_{n+1}},\psi_{\a_{n+1}}, {d}_{n + 1}}$ is a normal triple.
We will prove that $d_{n+1}$ and $\pi_{\a_{n+1}}$ satisfy (\ref{item:map1})--(\ref{item:map3}).
(\ref{item:map1}) is clear.
Second, we have $\pi_{\a_{n+1}}''\sset(d_{n+1})=b\in \cu_{\a_{n+1}}$.
Next, we check that for every $m\le n+1$, $\forall^{\infty}k\in \sset(d_{n+1})\ [\pi_{\a_m}(k)=\pi_{\a_{n+1},\a_m}(\pi_{\a_{n+1}}(k))]$.
We distinguish two cases: either $m=n+1$ or $m\le n$.
If $m=n+1$, then since $\pi_{\a_{n+1},\a_{n+1}}=\id$, for every $k\in \sset(d_{n+1})$ we have $\pi_{\a_{n+1}}(k)=\pi_{\a_{n+1},\a_{n+1}}(\pi_{\a_{n+1}}(k))$.
If $m\le n$, then it is easy to find a $k_0 \in \omega$ so that for every $k\in \sset(d_{n+1})\lp k_0\rp$ the following hold: $\pi_{\a_n}(k)=\pi_{\a_{n+1},\a_{n}}(\pi_{\a_{n+1}}(k))$, $\pi_{\a_m}(k)=\pi_{\a_n,\a_m}(\pi_{\a_n}(k))$, and $\pi_{\a_{n+1},\a_m}(\pi_{\a_{n+1}}(k)) = \pi_{\a_n,\a_m}(\pi_{\a_{n+1}, {\alpha}_{n}}(\pi_{\a_{n+1}}(k)))$.
Hence for each $k\in \sset(d_{n+1})\lp k_0\rp$, $\pi_{\a_m}(k)=\pi_{\a_n,\a_m}(\pi_{\a_n}(k))=\pi_{\a_n,\a_m}(\pi_{\a_{n+1},\a_n}(\pi_{\a_{n+1}}(k)))=\pi_{\a_{n+1},\a_m}(\pi_{\a_{n+1}}(k))$, as required.
Fourth, $\seq{\pi_{\a_{n+1}},\psi_{\a_{n+1}},d_{n+1}}$ is a normal triple.
So the sequences $\seq{d_n:n<\o}$ and $\seq{\pi_{\a_n}:n<\o}$ are as required. 

Next we apply Lemma \ref{t30} in such a way that $\d$ is $\d$, $f$ is $\id$, $X=\set{\a_n:n<\o}$, $e$ is $c_{q_1}$, $\pi_{\d,\a_n}$ is $\pi_{q_1,\a_n}$ for $n<\o$, $d_n$ is $d_n$ for $n<\o$, $\pi_{\a_n}$ is $\pi_{\a_n}$  for $n<\o$, $b_{\d,\a_n}$ and $\psi_{\d,\a_n}$ are $b_{q_1,\a_n}$ and $\psi_{q_1,\a_n}$ for $n<\o$. Note that $\cf(\d)=\o$, $\sup(X)=\d$, $X\subset \d$ and that Lemma \ref{t30}(\ref{i72}) is satisfied because $q_1$ satisfies Definition \ref{d14}(\ref{i14}). So we still have to prove that condition (\ref{i76}) of Lemma \ref{t30} is satisfied. First we prove (\ref{i78}). Fix $m\le n<\o$. By the construction of $\pi_{\a_n}$ we know that $\forall^{\infty}k\in \sset(d_{n})\ [\pi_{\a_m}(k)=\pi_{\a_n,\a_m}(\pi_{\a_n}(k))]$ as
required. To see (\ref{i79}) take $n<\o$ and note that $\seq{\pi_{\a_n},\psi_{\a_n},d_n}$ is a normal triple. Next we prove (\ref{i77}). We have to prove that for every $m,n<\o$, $\pi_{\a_n}''\sset(d_m)\in \cu_{\a_n}$. We consider two cases: either $m\le n$ or $m>n$. If $m\le n$ note $\sset(d_n)\subset^* \sset(d_m)$ so since by construction $\pi_{\a_n}''\sset(d_n)\in \cu_{\a_n}$ we have $\pi_{\a_n}''\sset(d_m)\in \cu_{\a_n}$. If $m>n$ then by construction $\pi_{\a_m}''\sset(d_m)\in \cu_{\a_m}$. By already proved (\ref{i78}) $\pi_{\a_m,\a_n}''(\pi_{\a_m}''\sset(d_m))=^*\pi_{\a_n}''\sset(d_m)$. By Definition \ref{d22}(\ref{i5}), $\pi_{\a_m,\a_n}''(\pi_{\a_m}''\sset(d_m))\in \cu_{\a_n}$ so we have $\pi_{\a_n}''\sset(d_m)\in \cu_{\a_n}$ as required. 

So all conditions of Lemma \ref{t30} are satisfied. Hence, there are $e'$, $d'$, $\pi'$ and $\psi'$ satisfying conditions (\ref{i84}-\ref{i83}) of the conclusion of Lemma \ref{t30}. In particular, by (\ref{i81}) we know that there is $k_1$ such that $\forall k\in \sset(d')\lp k_1\rp\ [\pi_{\a_0}(k)=\pi_{q_1,\a_0}(\pi'(k))]$. Let $n<\o$ be such that $(\pi')''d'(k_1)\subset e'(n)$, and $k_2$ such that $\sset(e')\lp n+1\rp =(\pi')''\sset(d')\lp k_2\rp$. Since $\seq{\pi',\psi',d'}$ is a normal triple, $k_2$ is well defined and $k_2\ge k_1$. Now let $d^*$ be defined by $d^*(k)=d'(k_2+k)$ ($k<\o$), let $e^*$ be defined by $e^*(k)=e'(n+k+1)$ ($k<\o$). Note that $e^*\le_0 e'\le_0 c_{q_1}$, $\sset(e^*)=^*\sset(e')$ and $d^*\le d'$. Define also $\pi\in \o^{\o}$ as follows: for $k\in \sset(d^*)$ let $\pi(k)=\pi'(k)$ and $\pi(k)=0$ otherwise. Let $\psi\in \o^{\o}$ be defined by $\psi(k)=\psi'(k_2+k)$ for $k<\o$. Note that $\seq{\pi,\psi,d^*}$ is a normal triple and that $\pi''\sset(d^*)=\sset(e^*)$. Define $q^*=\seq{e^*,\d,X_{q_1},\seq{\pi_{q_1,\a}:\a\in X_{q_1}}}$. Since $e^*\le c_{q_1}$, by Remark \ref{r101}, in order to show $q^*\le q_1$ we only have to prove that $q^*$ satisfies Definition \ref{d14}(\ref{i16}). First we show that it holds for all $\a_n$ ($n<\o$). Take $n<\o$. By (\ref{i82}) of the conclusion of Lemma \ref{t30} and because $\sset(e^*)=^*\sset(e')$ we have $\pi_{q_1,\a_n}''\sset(e^*)\in \cu_{\a_n}$. Now we prove (\ref{i16}). Let $\a\in X_{q_1}$. Pick $\a_n\ge \a$. By Remark \ref{r101}, $q^*$ satisfies (\ref{i17}) so $\pi_{\a_n,\a}''(\pi_{q_1,\a_n}''\sset(e^*))=^*\pi_{q_1,\a}\sset(e^*)$. By Definition \ref{d22}(\ref{i5}) we know $\pi_{\a_n,\a}''(\pi_{q_1,\a_n}''\sset(e^*))\in \cu_{\a}$. Hence $\pi_{q_1,\a}''\sset(e^*)\in \cu_{\a}$ as required. We will show that $q^*$, $d^*$, $\pi$ and $\psi$ satisfy conclusion of this lemma. First, $\seq{\pi,\psi,d^*}$ is a normal triple. Second, $\a\in X_{q_0}=X_{q_1}=X_{q^*}$. Third, $\pi''\sset(d^*)=\sset(e^*)$. Fourth, for every $k\in \sset(d^*)$ we know that $k\in \sset(d')\lp k_2\rp$, so $\pi_1(k)=\pi_{\a}(k)=\pi_{q_1,\a}(\pi(k))$. Note that $\pi_1=\pi_{\a}$ by definition of $\pi_{\a_0}$. So $q^*$ is as required.

\vskip2mm\noindent
Case II: $\g_{q_0}<\d$. Let $n_0$ be such that $c_{q_0}\le_{n_0}b_{q_0,\g_{q_0}}$ and that for every $k\in \sset(c_{q_0})\lp n_0\rp$ we have $\pi_{q_0,\a}(k)=\pi_{\g_{q_0},\a}(\pi_{q_0,\g_{q_0}}(k))$

\begin{claim}\label{t201}
There are $d'\le d$, $b\subset \pi_{q_0,\g_{q_0}}''\sset(c_{q_0})\lp n_0\rp$ and $\pi_2,\psi_2\in \o^{\o}$ such that $\pi_2''\sset(d')=b$, $\forall k\in \sset(d')\ [\pi_1(k)=\pi_{\g_{q_0},\a}(\pi_2(k))]$ and that $\seq{\pi_2,\psi_2,d'}$ is a normal triple.
\end{claim}

\begin{proof}
We will consider two cases: either $\a = \g_{q_0}$ or $\a<\g_{q_0}$. If $\a<\g_{q_0}$ then we apply Definition \ref{d22}(\ref{i1}) with $\a=\a$, $\b=\g_{q_0}$, $\pi_1=\pi_1$, $\psi_1=\psi_1$, $b_1=b_1$, $d=d$ and $a=\pi_{q_0,\g_{q_0}}''\sset(c_{q_0})\lp {n}_{0} \rp$. Note that hypothesis of Definition \ref{d22}(\ref{i1}) are satisfied. Hence there are $b\in \cu_{\g_{q_0}}$, $\pi,\psi\in\o^{\o}$ and $d'\le_0 d$ so that $\seq{\pi,\psi,d'}$ is a normal triple, $\pi''\sset(d')=b$ and $\forall k\in \sset(d')\ [\pi_1(k)=\pi_{\g_{q_0},\a}(\pi(k))]$ as required.

If $\a=\g_{q_0}$, first let $l$ be such that $d\le_l b_1$. Put $b=\pi_1''\sset(d)\lp l\rp\cap \pi_{q_0,\g_{q_0}}''\sset(c_{q_0})\lp n_0\rp$ and note $b\in \cu_{\g_{q_0}}$ and $b\subset \pi_{q_0,\g_{q_0}}''\sset(c_{q_0})\lp n_0\rp$. Put $F_n=\set{m<\o: \pi_1''d(m)=\set{b(n)}}$. By Lemma \ref{t14}, $L_n=\max(F_n)$ is well defined and $L_n<L_{n+1}$ is true for $n<\o$. Define $d'$ as follows: for $n<\o$ let $d'(n)=d(L_n)$. Since $L_n<L_{n+1}$ ($n<\o$) we know that $d'\in \P$. Define $\pi_2$ as follows: for $k\notin\sset(d')$ let $\pi_2(k) = {\pi}_{1}(k)$ while $\pi_2(k)=0$ otherwise. Since $L_n\ge l$ for every $n<\o$, we can define $L'_n$ such that $d(L_n)\subset b_1(L'_n)$. Then $\pi_1''b_1(L'_n)=\set{{\psi}_{1}(L'_n)}$. Define $\psi_2(n)=\psi_1(L'_n)$. Then it is easy to see that $\seq{\pi_2,\psi_2,d'}$ is a normal triple. We know that $\pi_2''\sset(d')=b$ because for every $n<\o$, $\pi_2''d'(n)=\pi_1''d(L_n)=\pi_1''b_1(L'_n)=\set{b(n)}$. To see that $\forall k\in \sset(d')\ [\pi_1(k)=\pi_{\g_{q_0},\a}(\pi_2(k))]$ note that $\g_{q_0}=\a$ and $\pi_{\a,\a}=\id$ so for every $k\in \sset(d')$ we have that $\pi_1(k)=\pi_{\a,\a}(\pi_2(k))$ is true.
\end{proof}

Now that we have $b$ with the required properties, since $\cu_{\g_{q_0}}$ is rapid, there is $c\in \cu_{\g_{q_0}}$ so that for every $n<\o$ there is $m\ge t(n+1)$ so that $c(n)=b(m)$. We will build $e^*$, $d^*$, $\psi$, and $\pi$ so that the following hold: $e^*\le_0 c_{q_0}$, $d^*\le_0 d'$, $\pi''\sset(d^*)=e^*$, $\pi_{q_0,\g_{q_0}}''\sset(e^*)=c$, $\forall k\in \sset(d^*)\ [\pi_2(k)=\pi_{q_0,\g_{q_0}}(\pi(k))]$, and $\seq{\pi,\psi,d^*}$ is a normal triple. For each $n<\o$ define $M_n=\max\set{m<\o: \pi_{q_0,\g_{q_0}}''c_{q_0}(m)=\set{c(n)}}$ and $K_n=\max\set{m<\o:\pi_2''d'(m)=\set{c(n)}}$. By Lemma \ref{t14}, $M_n\ge n_0$, $M_{n+1}>M_n$ and $K_{n+1}> K_n$ ($n<\o$). We show that $K_n\ge t(n+1)$ for every $n<\o$. Define $l_n=\max\set{m<\o:\pi_2''d'(m)=b(n)}$ ($n<\o$), and note that by Lemma \ref{t14} numbers $l_n$ are well defined and that $l_{n+1}>l_n$ ($n<\o$). Hence $l_{n+1}\ge n+1$ ($n<\o$). Fix $n<\o$. Then there is $v_n\ge t(n+1)$ such that $c(n)=b(v_n)$. So $K_n=l_{v_n}\ge l_{t(n+1)}\ge t(n+1)$. Define $e^*$ as follows: for $n<\o$ let $e^*(n)\in [c_{q_0}(M_n)]^{n+1}$.
Define $d^*$ as follows.
For each $n' < \omega$ choose a sequence $\langle {d}^{\ast}(n): s(n') \leq n < s(n' + 1) \rangle$ in such a way that for all $s(n') \leq n < s(n' + 1)$, ${d}^{\ast}(n) \in {\left[ d'({K}_{n'}) \right]}^{n + 1}$ and for all $s(n') \leq n < n + 1 < s(n' + 1)$, $\max({d}^{\ast}(n)) < \min({d}^{\ast}(n + 1))$.
Next, define $\pi\in \o^{\o}$ as follows: for $k\notin \sset(d^*)$ let $\pi(k)=0$, while for $k\in \sset(d^*)$ let $\pi(k)=\sset(e^*)(n)$ where $n$ is such that $k\in d^*(n)$. Let $\psi\in \o^{\o}$ be defined as $\psi(n)=\sset(e^*)(n)$ for every $n<\o$. Note that $\seq{\pi,\psi,d^*}$ is a normal triple because $\pi''d^*(n)=\set{\psi(n)}=\set{\sset(e^*)(n)}$ for $n<\o$. To show that $e^*$, $d^*$, $\pi$ and $\psi$ are as required, we still have to show that for every $k\in \sset(d^*)\ [\pi_2(k)=\pi_{q_0,\g_{q_0}}(\pi(k))]$. Fix $k\in \sset(d^*)$. Let $n$ be such that $k\in d^*(n)$ and let $m$ be such that $s(m)\le n<s(m+1)$. Then $k\in d'(K_m)$ so $\pi_2(k)=c(m)$. Also $\pi(k)=\sset(e^*)(n)\in e^*(m)\subset c_{q_0}(M_m)$ so $\pi_{q_0,\g_{q_0}}(\pi(k))=c(m)=\pi_2(k)$ as required. Define $q^*=\seq{e^*,\g_{q_0},X_{q_0},\seq{\pi_{q_0,\a}:\a\in X_{q_0}}}$. Since $e^*\le_0 c_{q_0}$, by Remark \ref{r101}, in order to prove $q^*\in \Q^{\d}$ and $q^*\le q_0$ it is enough to show that $q^*$ satisfies property (\ref{i16}) of Definition \ref{d14}. So let $\b\in X_{q_0}$. There are two cases: either $\b=\g_{q_0}$ or $\b<\g_{q_0}$. If $\b=\g_{q_0}$ then $\pi_{q_0,\g_{q_0}}''\sset(e^*)=c\in \cu_{\b}$. If $\b<\g_{q_0}$ then by Remark \ref{r101} property (\ref{i17}) of Definition \ref{d14} holds for $q^*$ so $\pi_{\g_{q_0},\b}''(\pi_{q_0,\g_{q_0}}''\sset(e^*))=^*\pi_{q_0,\b}''\sset(e^*)$. Also, by Definition \ref{d22}(\ref{i5}), $\pi_{\g_{q_0},\b}''(\pi_{q_0,\g_{q_0}}''\sset(e^*))\in\cu_{\b}$ so $\pi_{q_0,\b}''\sset(e^*)\in\cu_{\b}$. Hence $q^*\in\Q^{\d}$ and $q^*\le q_0$. Finally, we prove that $q^*$ satisfies conclusion of this lemma. By the choice of $q_0$ we have $\a\in X_{q_0}$. By the choice of $d^*$ and $e^*$ we have $\pi''\sset(d^*)=\sset(e^*)$. We already explained why $\seq{\pi,\psi,d^*}$ is a normal triple. So we still have to prove that $\forall k\in \sset(d^*)\ [\pi_1(k)=\pi_{q^*,\a}(\pi(k))]$. By Claim \ref{t201} and since $d^*\le_0 d'$ we have $\forall k\in \sset(d^*)\ [\pi_1(k)=\pi_{\g_{q_0},\a}(\pi_2(k))]$. We also proved $\forall k\in \sset(d^*)\ [\pi_2(k)=\pi_{q_0,\g_{q_0}}(\pi(k))]$. Since $M_n\ge n_0$ for every $n<\o$ and $\pi''\sset(d^*)=\sset(e^*)$, we also have that $\forall k\in \sset(d^*)\ [\pi_{q_0,\a}(\pi(k))=\pi_{\g_{q_0},\a}(\pi_{q_0,\g_{q_0}}(\pi(k)))]$. From these three equations we get $\pi_1(k)=\pi_{\g_{q_0},\a}(\pi_2(k))=\pi_{\g_{q_0},\a}(\pi_{q_0,\g_{q_0}}(\pi(k)))=\pi_{q_0,\a}(\pi(k))$ as required. Hence $q^*$ satisfies conclusion of this lemma.
\end{proof}
The proofs of Lemmas \ref{t5}--\ref{t12} go through with no essential modifications under $\MA$.
Of course the proofs would depend on the fact that ${\Q}^{\delta}$ would be $< \c$ closed and the generalized form of Lemma \ref{t30} would hold in this context.
The inductive construction occurring in Subcase Ib of the proof of Lemma \ref{t12} would need to be of length $\xi$, for some $\xi < \c$.
The limit stages of this inductive construction can be passed by appealing to the generalized forms of Lemma \ref{t30} and Clause (\ref{i8}) of Definition \ref{d22}.
\section{A long chain} \label{sec:long}
We now have all the tools necessary for constructing the desired chain of P-points.
As our construction requires $\CH$, we assume ${2}^{{\aleph}_{0}} = {\aleph}_{1}$ in this section.
The chain of length ${\omega}_{2}$ will be obtained from an ${\omega}_{2}$-generic sequence. 
\begin{theorem}[$\CH$]\label{t1}
There is an $\o_2$-generic sequence.
\end{theorem}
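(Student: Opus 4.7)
The plan is to construct, by transfinite recursion on $\delta \le \o_2$, a coherent family of $\delta$-generic sequences $S_\delta$ with $S_\xi = S_\delta \rest \xi$ for all $\xi < \delta \le \o_2$. The base case $\delta = 0$ is trivial (the empty sequence), and limit stages follow from Remark \ref{r4} by taking unions. All the content is therefore concentrated in the successor step $S_\delta \mapsto S_{\delta+1}$: given $S_\delta$, we must produce an ultrafilter $\cu_\delta$ (presented as a $\le$-decreasing sequence $\seq{c^\delta_i : i < \o_1}$ in $\P$) together with maps $\pi_{\delta, \alpha}$ for $\alpha \le \delta$ such that the extended sequence satisfies every clause of Definition \ref{d22}.

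My approach for this successor step is to work entirely inside the ground model and build a sufficiently generic filter $G$ on the poset $\Q^\delta$ of Definition \ref{d14} associated with $S_\delta$, then read off the new data: let $\seq{c^\delta_i : i < \o_1}$ enumerate the $c_q$'s along a cofinal decreasing chain in $G$, and for each $\alpha \le \delta$ let $\pi_{\delta, \alpha}$ be the common value of $\pi_{q, \alpha}$ over all $q \in G$ with $\alpha \in X_q$. The filter $G$ will be produced as the one generated by a decreasing $\o_1$-sequence of conditions $\seq{q_\xi : \xi < \o_1}$ chosen to meet a prescribed list of $\aleph_1$ dense subsets of $\Q^\delta$; the countable closure of $\Q^\delta$ provided by Lemma \ref{t91} is what allows us to pass through the countable limit stages of this construction.

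The required dense sets are:
\begin{itemize}
\item for each $X \in \cp(\o)$, a set deciding $X$ (Lemma \ref{t3}) --- securing that $\cu_\delta$ is an ultrafilter, hence clause (\ref{i4});
\item for each strictly increasing $f \in \o^\o$, a set from Lemma \ref{t11} thinning $c_q$ past $f$ --- giving rapidity of $\cu_\delta$;
\item for each $\beta < \delta$, the set $\set{q : \beta \in X_q}$ (Lemma \ref{t2}) --- which defines $\pi_{\delta, \beta}$ and secures clauses (\ref{i5}), (\ref{i0}), (\ref{i6});
\item for each continuous monotone $\phi : \cp(\o) \to \cp(\o)$ with $\phi(A) \neq \ps$ for $A \in \cu_\beta$, and each $\beta < \delta$, a killing set from Lemma \ref{t4}; combined with Theorem \ref{thm:dt} this gives clause (\ref{i9});
\item for each normal triple $\seq{\pi_1, \psi_1, b_1}$, each $d \le b_1$, and each $\alpha < \delta$ with $\pi_1'' \sset(d) \in \cu_\alpha$, a set from Lemma \ref{t12} --- giving clause (\ref{i1});
\item if $\cf(\delta) = \o$, for each admissible data $(\seq{d_j : j < \o}, X, \seq{\pi_\alpha : \alpha \in X})$ satisfying the hypothesis of Lemma \ref{t9}, a set from Lemma \ref{t9} --- giving clause (\ref{i8}) at $\mu = \delta$.
\end{itemize}

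Under $\CH$ each family has cardinality at most $\aleph_1$: this is immediate for the first two; for the third and fourth, $|\delta| \le \aleph_1$ and Theorem \ref{thm:dt} restricts us to $\c = \aleph_1$ continuous maps; for the last two, the parameter spaces are countable sequences drawn from sets of size $\aleph_1$, giving $\aleph_1^{\aleph_0} = \aleph_1$. Thus $\aleph_1$ dense sets in total can be enumerated and met along the $\o_1$-chain using Lemma \ref{t91}. The main obstacle will be the clause-by-clause verification of Definition \ref{d22} for $S_{\delta+1}$, most delicately for clauses (\ref{i1}) and (\ref{i8}), which demand a \emph{cofinal-in-$\c$} collection of witnesses rather than a single one. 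The cofinality will be extracted from the fact that each relevant density lemma produces a proper extension $q^*$ of any $q \in G$: given a witness demand, one first extends via Lemma \ref{t3} to decide the governing set $a$, then extends again via Lemma \ref{t12} or Lemma \ref{t9}, showing that arbitrarily late stages of the enumeration produce valid $b \in \cu_\delta$ or $i^* < \c$, which together yield the required cofinality.
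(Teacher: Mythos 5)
Your route is the paper's own: handle the successor step by building a decreasing $\o_1$-chain in $\Q^{\d}$ that meets $\aleph_1$ dense sets supplied by Lemmas \ref{t3}, \ref{t11}, \ref{t2}, \ref{t4}, \ref{t12}, \ref{t9}, using Lemma \ref{t91} at countable limits, then reading off $c^{\d}_i=c_{q_i}$ and $\pi_{\d,\a}$ from the chain (base and limit stages via Remark \ref{r4} are exactly as in the paper). The one genuine gap is the mechanism you offer for the cofinality demanded by clauses (\ref{i1}) and (\ref{i8}) of Definition \ref{d22}. Meeting each of your listed dense sets once produces a single witness, and the argument you sketch -- that ``each relevant density lemma produces a proper extension of any $q\in G$'' -- cannot be run after the chain is finished: in clause (\ref{i8}) the witnesses $c^{\d}_{i^*}$ must literally be terms $c_{q_{i^*}}$ of the chain you constructed, and the defining property of a good index $i^*$ (an exact equality $\sset(c^{\d}_{i^*})=\pi''\sset(d^*)$ for some $d^*$ below \emph{all} the $d_j$, forming a normal triple) does not propagate from one index to later ones. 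So a task treated at a single stage yields a bounded, not cofinal, set of good $i^*$. Your ``first decide the governing set $a$ by Lemma \ref{t3}, then apply Lemma \ref{t12}'' composite can indeed be made to work for clause (\ref{i1}) (one dense set per task and per $X\in\cp(\o)$, still $\aleph_1$ many under $\CH$), but for clause (\ref{i8}) there is no governing set to decide, so that trick does not apply and your plan as stated does not deliver cofinally many $i^*<\c$.

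The repair is the book-keeping device the paper uses: enumerate the Lemma \ref{t12}-tasks and the Lemma \ref{t9}-tasks so that each task appears $\o_1$ many times, cofinally in $\o_1$ (the paper lists $\mathtt{T}$ and $\Gamma$ this way), and act on the task at every stage where it is listed. Then, given $a\in\cu_{\d}$, one picks $j$ with $\sset(c_{q_j})\subset^* a$ and a later stage $i\ge j$ at which the task is handled, so the resulting $b\subset^*\sset(c_{q_j})\subset^* a$, respectively the index $i+1\ge i_0$, is as required; this is precisely how the paper verifies (\ref{i1}) and (\ref{i8}) at the end of its proof. With that scheduling added, and noting that your Lemma \ref{t2} dense sets (or the paper's Claim \ref{t13}) guarantee that $\pi_{\d,\a}$ is defined for every $\a<\d$, your construction coincides with the paper's.
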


\begin{proof}
We build by induction sequence $\seq{S_{\d'}:\d'\le \o_2}$ such that for each $\d'\le \o_2$:
\begin{enumerate}
\item\label{i211} $S_{\d'}$ is $\d'$-generic;
\item\label{i212} $\forall \g<\d'\ [S_{\g}=S_{\d'}\rest\g]$.
\end{enumerate}
For $\d'=0$, let $S_0=\seq{0,0}$. 
Next assume that $\delta'$ is a limit ordinal and that for every $\g<\d'$, we are given $S_{\g}$ as required. 
Define 
\begin{align*}
S_{\d'}=\seq{\bigcup_{\g<\d'}\seq{c^{\a}_i:\a<\g\wedge i<\c},\bigcup_{\g<\d'}\seq{\pi_{\b,\a}:\a\le\b<\g}}. 
\end{align*}
Remark \ref{r4} ensures that $S_{\d'}$ satisfies (\ref{i211}) and (\ref{i212})\footnote{We consider the sequence $\seq{c^{\a}_i:\a<\g\wedge i<\c}$ as the function from $\g\times \c$ into $\P$ while the sequence $\seq{\pi_{\b,\a}:\a\le \b<\g}$ is considered as the function from $\set{\seq{\a,\b}:\a\le\b<\g}$ into $\o^{\o}$.}. 
Finally assume that $\d'=\d+1$ and that $S_{\d}$ satisfies (\ref{i211}) and (\ref{i212}). Note $\d<\o_2$. In the next paragraph we build $S_{\d+1}$.

First partition $\o_1=T_0\cup T_1\cup T_2\cup T_3\cup T_4$ into five disjoint sets so that $\abs{T_i}=\o_1$ ($i\in 5$). Next we enumerate certain sets. Let $\cp(\o)=\set{X_i:i\in T_0}$. Let $V=\set{f_i:i\in T_1}$, where $V$ is the set of all increasing functions in $\o^{\o}$. Let $\mathtt{T}=\o^{\o}\times \o^{\o}\times \P\times \P\times \d=\set{\mathtt{t}_i:i\in T_2}$ be enumeration of $\mathtt{T}$ in such a way that every element occurs $\o_1$ many times on the list. Let $\Phi\times \d=\set{\seq{\phi_i,\a_i}:i\in T_3}$, where $\Phi$ is the set of all continuous monotone maps $\phi:\cp(\o)\to \cp(\o)$, and note that this enumeration is possible because $\abs{\Phi}=\c$. Let $\Gamma=\set{\mathtt{s}_i:i\in T_4}$ be enumeration of $\Gamma$ such that every element of $\Gamma$ appears $\o_1$ many times, where $\Gamma$ is the set of all $\seq{X,\bar d, \bar \pi, \bar b, \bar \psi}$ such that $X\in [\d]^{\le\o}$, $\bar d\in \P^{\o}$, $\bar \pi\in (\o^{\o})^{X}$, $\bar b\in \P^{X}$, $\bar \psi\in (\o^{\o})^X$. Now we build a decreasing sequence $\seq{q_i:i<\o_1}$ in $\Q^{\d}$. Since $\Q^{\d}\neq 0$ pick arbitrary $q_0\in \Q^{\d}$. Assume that for $i<\o_1$ we already built $\seq{q_j:j<i}$. If $i$ is limit then by Lemma \ref{t91} we choose $q_i$ such that $q_i\le q_j$ ($j<i$). If $i=i_0+1$ then we distinguish five cases. Suppose that $i_0\in T_0$. Then $X_{i_0}\in \cp(\o)$. By Lemma \ref{t3} we pick $q_i\le q_{i_0}$ such that $\sset(c_{q_i})\subset X_{i_0}$ or $\sset(c_{q_i})\subset \o\setminus X_{i_0}$. Suppose that $i_0\in T_1$. Then $f_{i_0}\in\o^{\o}$ is a strictly increasing so by Lemma \ref{t11} there is $q_i\le q_{i_0}$ such that $\forall n<\o\ [\sset(c_{q_i})(n)\ge f_{i_0}(n)]$. Suppose that $i_0\in T_2$. If $\seq{\pi_{i_0},\psi_{i_0},b_{i_0}}$ is a normal triple, $d_{i_0}\le b_{i_0}$ and $\pi_{i_0}''\sset(d_{i_0})\in \cu_{\a_{i_0}}$ then by Lemma \ref{t12} pick $q_i\le q_{i_0}$, $d^*_i\le d_{i_0}$, $\pi^*_i,\psi^*_i\in\o^{\o}$ so that $\seq{\pi^*_i,\psi^*_i,d^*_i}$ is a normal triple, $\a_{i_0}\in X_{q_i}$, $(\pi^*_i)''\sset(d^*_i)=^*\sset(c_{q_i})$ and $\forall k\in \sset(d^*_i)[\pi_{i_0}(k)=\pi_{q_i,\a_{i_0}}(\pi^*_i(k))]$. Otherwise let $q_i=q_{i_0}$. Suppose that $i_0\in T_3$. Then $\phi_{i_0}:\cp(\o)\to\cp(\o)$ is monotone and continuous. If $\phi_{i_0}(A)\neq 0$ for every $A\in \cu_{\a_{i_0}}$ then by Lemma \ref{t4} pick $q_i\le q_{i_0}$ such that $\a_{i_0}\in X_{q_i}$ and $\phi(A)\nsubseteq \sset(c_{q_i})$ for every $A\in \cu_{\a_{i_0}}$. Otherwise let $q_i=q_{i_0}$. Suppose that $i_0\in T_4$. Then $X_{i_0}\in [\d]^{\le\o}$, $\bar d_{i_0}$ and $\bar b_{i_0}$ are decreasing sequences in $\P$ and $\bar \pi_{i_0},\bar \psi_{i_0}\in (\o^{\o})^{X_{i_0}}$. If $\cf(\d)=\o$, $\g_{q_{i_0}}=\d$, $X_{i_0}\subset X_{q_{i_0}}$, $\sup(X)=\d$ and $\bar d_{i_0}$, $\bar \pi_{i_0}$, $\bar \psi_{i_0}$ and $\bar b_{i_0}$ satisfy Lemma \ref{t9}(\ref{i2001}-\ref{i2003}), then by Lemma \ref{t9} pick $q_i\le q_{i_0}$, $d^*_i$ and $\pi^*_i$ satisfying Lemma \ref{t9}(\ref{i2005}-\ref{i2008}). Otherwise let $q_i=q_{i_0}$. Now define $S_{\d'}$ as follows: $\pi_{\d,\d}=\id$, for $\a\le \b<\d$ and $i<\o_1$, $c^{\a}_i$ and $\pi_{\b,\a}$ are as in $S_{\d}$, while for $\a<\d$ and $i<\o_1$, $c^{\d}_i$ is $c_{q_i}$ and $\pi_{\d,\a}$ is $\pi_{q_j,\a}$ where $j$ is minimal such that $\a\in X_{q_j}$.

\begin{claim}\label{t13}
For every $\a<\d$ there is $i<\o_1$ such that $\a\in X_{q_{i}}$.
\end{claim}

\begin{proof}
Take $\a<\d$. Consider the function $\phi:\cp(\o)\to\cp(\o)$ given by $\phi(A)=\o$ for $A\subset \o$. Clearly, $\phi(A)\neq 0$ for $A\in \cu_{\a}$. So there is $i_0\in T_3$ so that $\seq{\phi,\a}=\seq{\phi_{i_0},\a_{i_0}}$. Then for $i=i_0+1$, by choice of $q_i$ we have $\a_{i_0}\in X_{q_i}$.
\end{proof}

Note that by Claim \ref{t13} $\pi_{\d,\a}$ is defined for every $\a<\d$. Namely, if $i<\o_1$ is such that $\a\in X_{q_i}$, then $\pi_{\d,\a}=\pi_{q_i,\a}$. We still have to prove that $S_{\d'}$ is $\d+1$-generic sequence. Only conditions (\ref{i1}) and (\ref{i8}) of Definition \ref{d22} need checking.

To see that (\ref{i1}) holds, take any $\a<\b\le \d$. If $\b<\d$ the statement follows because $S_{\d}$ is $\d$-generic and $S_{\d}=S_{\d+1}\rest\d$. If $\b=\d$ let $\pi_1,\psi_1,d_1,b_1,\a$ be as in the statement of (\ref{i1}) and let $a\in \cu_{\d}$. Since $a\in\cu_{\d}$ there is $j<\c$ such that $c^{\d}_j=c_{q_j}\subset^* a$. Then $\seq{\pi_1,\psi_1,d_1,b_1,\a}=\mathtt{t}_i$ for some $i\in T_2$ such that $i\ge j$. Note that this is true because every element of $\mathtt{T}$ appears $\o_1$ many times in its enumeration. So by the choice of $q_{i+1}$ we know that $c_{q_{i+1}}\subset^* c_{q_j}\subset^*a$ and that $d^*_{i+1}$, $\pi^*_{i+1}$, $\psi^*_{i+1}$ and $b^*_{i+1}$ are such that $\seq{\pi^*_{i+1},\psi^*_{i+1},b^*_{i+1}}$ is a normal triple, $\a\in X_{q_{i+1}}$, $(\pi^*_{i+1})''\sset(d^*_{i+1})=^*\sset(c_{q_{i+1}})$ and $\forall k\in \sset(d^*_{i+1})\ [\pi_1(k)=\pi_{q_{i+1},\a}(\pi^*_{i+1}(k))]$. Denote $b=(\pi^*_{i+1})''\sset(d^*_{i+1})$ and note that $b\subset^* a$. Now $b$, $d^*_{i+1}$, $\pi^*_{i+1}$, $\psi^*_{i+1}$ and $b^*_{i+1}$ witness that Definition \ref{d22}(\ref{i1}) is true in this case also.

Next we show that $S_{\d'}$ satisfies condition (\ref{i8}) of Definition \ref{d22}. Let $\mu$, $X$, $\bar d$, and $\bar \pi$ be as in Definition \ref{d22}(\ref{i8}). Since we require $i^*$ satisfying Definition \ref{d22}(\ref{i8}) to be cofinal in $\c$ let $i_0<\c$ be fixed. If $\mu<\d$, the condition is satisfied because $S_{d'}=S_{\d}\rest\d$ and $S_{\d}$ is $\d$-generic. If $\mu=\d$ then, by Claim \ref{t13} and because every element of $\Gamma$ appears $\c$ many times in its enumeration, there is $i\in T_4$ such that $i\ge i_0$ and $\seq{X,\bar d,\bar \pi,\bar b,\bar \psi}=\mathtt{s}_i$ and $X\subset X_{q_i}$. By the choice of $q_{i+1}$ we know that $d^*_{i+1}$, $\pi^*_{i+1}$ and $c^{\d}_{i+1}=c_{q_{i+1}}$ satisfy conditions (\ref{i2005}-\ref{i2008}) of Lemma \ref{t9} which implies that they also satisfy condition (\ref{i8}) of Definition \ref{d22}. Since $\pi_{\d,\a}=\pi_{q_{i+1},\a}$ for $\a\in X$ this shows that $i+1\ge i_0$ witnesses that Definition \ref{d22}(\ref{i8}) is satisfied.
\end{proof}
For each $\alpha < {\omega}_{2}$, let ${\cu}_{\alpha} = \{a \in \cp(\omega): \exists i < \c [\sset({c}^{\alpha}_{i}) \; {\subset}^{\ast} \; a]\}$.
As we have noted in Section \ref{sec:prelim}, $\langle {\cu}_{\alpha}: \alpha < {\omega}_{2}\rangle$ is a sequence of P-points that is strictly increasing with respect to both ${\leq}_{RK}$ and ${\leq}_{T}$.
Thus the ordinal ${\omega}_{2}$ embeds into the P-points under both orderings.
In fact, the proof of Theorem \ref{t1} shows something slightly more general.
We could have started the construction with a fixed $\delta$-generic sequence for some $\delta < {\omega}_{2}$, and then extended it to an ${\omega}_{2}$-generic sequence in the same way.
So we have the following corollary to the proof.
\begin{corollary}[$\CH$]\label{t15}
Suppose that $\d<\o_2$ and that $S_{\d}$ is a $\d$-generic sequence. Then there is an $\o_2$-generic sequence $S$ such that $S\rest\d=S_{\d}$.
\end{corollary}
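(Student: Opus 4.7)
The plan is to run essentially the same inductive construction as in the proof of Theorem \ref{t1}, but starting the induction from the given $S_{\d}$ instead of from $S_{0} = \seq{0,0}$. That is, I would build by transfinite induction on $\d'\in [\d,\o_2]$ a sequence $\seq{S_{\d'}:\d\le \d'\le \o_2}$ satisfying: $S_{\d}$ is the prescribed $\d$-generic sequence; $S_{\d'}$ is $\d'$-generic for each $\d' \in [\d,\o_2]$; and $S_{\gamma}=S_{\d'}\rest\gamma$ whenever $\d\le\gamma<\d'\le\o_2$. Setting $S=S_{\o_2}$ then gives the required extension.

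At limit stages $\d'$, I would simply define
\[
S_{\d'}=\seq{\bigcup_{\gamma<\d'}\seq{c^{\a}_i:\a<\gamma\wedge i<\c},\ \bigcup_{\gamma<\d'}\seq{\pi_{\b,\a}:\a\le \b<\gamma}},
\]
and invoke Remark \ref{r4} to conclude that $S_{\d'}$ is $\d'$-generic (noting that for $\gamma\le\d$ the sequence $S_{\gamma}$ is just $S_{\d}\rest\gamma$, which is $\gamma$-generic because $S_{\d}$ is $\d$-generic, while for $\d<\gamma<\d'$ the sequence $S_{\gamma}$ was built at a previous step of the induction and is $\gamma$-generic by the inductive hypothesis). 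At a successor stage $\d'=\d^{*}+1$, where $\d^{*}\ge \d$, the construction of $S_{\d'}$ from $S_{\d^{*}}$ is carried out exactly as in the proof of Theorem \ref{t1}: I build a suitably generic decreasing $\o_1$-chain $\seq{q_i:i<\o_1}$ in $\Q^{\d^{*}}$ using Lemma \ref{t91} at the limit stages and the density lemmas (Lemmas \ref{t3}, \ref{t11}, \ref{t12}, \ref{t4}, \ref{t9}) at the successor stages to meet the required dense sets enumerated via the partition $\o_1=T_0\cup T_1\cup T_2\cup T_3\cup T_4$. I then read off $c^{\d^{*}}_i=c_{q_i}$ and $\pi_{\d^{*},\a}=\pi_{q_j,\a}$ for the minimal $j$ with $\a\in X_{q_j}$, using the analogue of Claim \ref{t13} to see that $\pi_{\d^{*},\a}$ is defined for every $\a<\d^{*}$, and then verify clauses (\ref{i1}) and (\ref{i8}) of Definition \ref{d22} exactly as there.

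There is essentially no obstacle to adapt. The only thing to check is that starting the induction at $\d$ instead of $0$ is harmless, which it is, because the constructions of Section \ref{sec:top} (most importantly the density lemmas of Section \ref{sec:top} and the countable closure Lemma \ref{t91}) are all stated and proved for an arbitrary $\d^{*}$-generic sequence and an arbitrary $\d^{*}<\o_2$. The only mildly subtle point is verifying that the limit clause in Remark \ref{r4} actually applies at limits $\d'\in(\d,\o_2]$: one must observe that $S_{\d'}\rest\gamma$ is $\gamma$-generic for every $\gamma<\d'$, which splits into the cases $\gamma\le \d$ (where $S_{\d'}\rest\gamma=S_{\d}\rest\gamma$, and $S_{\d}\rest\gamma$ is $\gamma$-generic by the same remark applied to the given $\d$-generic $S_{\d}$) and $\d<\gamma<\d'$ (where $S_{\d'}\rest\gamma=S_{\gamma}$ by the coherence condition, and $S_{\gamma}$ is $\gamma$-generic by the inductive hypothesis). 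Once this is noted, the corollary follows by taking $S=S_{\o_2}$.
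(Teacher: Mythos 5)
Your proposal is correct and is essentially the paper's own argument: the paper obtains the corollary by observing that the proof of Theorem \ref{t1} applies verbatim when the induction is started from the given $\delta$-generic sequence $S_{\delta}$ instead of from $S_{0}$, with the same limit step via Remark \ref{r4} and the same successor step via the lemmas of Section \ref{sec:top}. Your additional check that restrictions of a generic sequence are generic (so Remark \ref{r4} applies at limits above $\delta$) is a correct and harmless elaboration of the same route.
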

When $\CH$ is replaced with $\MA$ and the lemmas from Section \ref{sec:top} have been appropriately generalized, the proof of the natural generalization of Theorem \ref{t1} presents little difficulty.
In the crucial successor step of the construction, ${\omega}_{1}$ can be replaced everywhere with $\c$; all of the sets that need to be enumerated have size $\c$ because ${\c}^{< \c} = \c$ under $\MA$.
The generalizations of the lemmas from Section \ref{sec:top} imply that each condition in ${\Q}^{\delta}$ has an extension that meets some given requirement, and the fact that ${\Q}^{\delta}$ is $< \c$ closed allows us to find lower bounds at the limit steps.
Therefore a ${\c}^{+}$-generic sequence exists under $\MA$.
\def\polhk#1{\setbox0=\hbox{#1}{\ooalign{\hidewidth
  \lower1.5ex\hbox{`}\hidewidth\crcr\unhbox0}}}
\providecommand{\bysame}{\leavevmode\hbox to3em{\hrulefill}\thinspace}
\providecommand{\MR}{\relax\ifhmode\unskip\space\fi MR }
% \MRhref is called by the amsart/book/proc definition of \MR.
\providecommand{\MRhref}[2]{%
  \href{http://www.ams.org/mathscinet-getitem?mr=#1}{#2}
}
\providecommand{\href}[2]{#2}

%\bibliographystyle{amsplain}
%\bibliography{Bibliography}
\end{document}